\definecolor{theme}{RGB}{15, 87, 24} 
\definecolor{lighttheme}{RGB}{51, 153, 63} 
\definecolor{block}{RGB}{102, 60, 0} 
\definecolor{lightblock}{RGB}{255, 238, 214} 
\definecolor{alert}{RGB}{212, 43, 43} 
\def\pagespace{3cm}
\newtheorem{thm}{Theorem}[section]
\newtheorem{prop}[thm]{Proposition}
\newtheorem{lemma}[thm]{Lemma}
\newtheorem{cor}[thm]{Corollary}
\newcommand{\pskip}{\left.\hspace{0.5cm}\right.} 
\newlength{\EUflag}
\newcommand{\bN}{\mathbb{N}}
\newcommand{\bZ}{\mathbb{Z}}
\newcommand{\prob}{\mathbb{P}}
\newcommand{\expec}{\mathbb{E}}
\newcommand{\Sn}{\mathcal{S}_n}
\newcommand{\inv}{\mathrm{Inv}}
\newcommand{\sign}{\mathrm{sgn}}
\newcommand{\bern}[1]{\textsc{Bernoulli}(#1)}
\newcommand{\geom}[1]{\textsc{Geometric}(#1)}
\newcommand{\pgeom}[1]{\textsc{Geometric}^+(#1)}
\newcommand{\sbgeom}[1]{\textsc{Geometric}^\star(#1)}
\newcommand{\BST}{\mathbb{T}}
\renewcommand{\root}{\varnothing}
\newcommand{\lv}{\mathbf{0}}
\newcommand{\rv}{\mathbf{1}}
\newcommand{\bst}[1]{T\!\left\langle#1\right\rangle}
\newcommand{\RIBST}{\mathbb{Y}}
\newcommand{\ribst}[1]{Y\!\left\langle#1\right\rangle}
\newcommand{\uv}{\Upsilon}
\newcommand{\localconv}{\overset{\mathrm{\ell.a.s.}}{\longrightarrow}}
\renewcommand{\mp}[1]{X_{#1,q}}
\newcommand{\imp}{\mathcal{X}_q}
\newcommand{\imt}{\mathcal{Y}_q}
\newcommand{\imroot}{o}
\newcommand{\rootconv}{\overset{\root}{\longrightarrow}}
\newcommand{\ghpconv}{\overset{\mathrm{GHP}}{\longrightarrow}}
\newcommand{\metric}{E}
\newcommand{\metricspace}{\mathcal{\metric}}
\newcommand{\ghplimit}{\metricspace_u}
\newcommand{\subtreeconv}{\overset{\mathrm{ssc}}{\longrightarrow}}
\newcommand{\leftT}{\BST_\lv}
\newcommand{\reco}[1]{R_{#1,q}}
\newcommand{\free}[1]{F_{#1,q}}
\newcommand{\disteq}{\overset{d}{=}}
\newcommand{\cond}[1]{\ensuremath{\mathrm{(\hyperlink{cond#1}{P#1})}}}
\newcommand{\asconv}{\overset{\mathrm{a.s.}}{\longrightarrow}}
\newcommand{\mt}[1]{T_{#1,q}}
\newcommand{\lastT}{T_{n,q}^L}
\newcommand{\disto}{\mathrm{dis}}
\newcommand{\dghp}{d_{\mathrm{GHP}}}
\renewcommand{\ss}{\tau}
\newcommand{\ssm}[1]{\ss_{#1,q}}
\title{Limits of Mallows trees}
\author{Beno\^it Corsini}
\date{}
\begin{document}

\maketitle

\begin{abstract}
    This article studies the limit of binary search trees drawn from Mallows permutations under various topologies.
    The main result, pertaining to the standard local topology for graphs, requires the introduction of a generalization of binary search trees to two-sided infinite sequences, referred to as \textit{redwood trees}.
    We then show that the almost-sure local limit of finite Mallows trees is the redwood tree drawn from the two-sided infinite Mallows permutation, thus corresponding to swapping the local limit and the binary search tree structure.

    Building off this result, we study various other natural topologies: the rooted topology of the local structure around the root, the Gromov-Hausdorff-Prokhorov topology of the tree seen as a metric space, and the subtree size topology of the ratio of nodes split between left and right subtrees.
    The limit of Mallows trees under these three topologies combined with the case of the local topology allow us to draw a global picture of what large Mallows trees look like from different perspective and further strengthen the relation between finite and infinite Mallows permutations.
\end{abstract}

\setcounter{tocdepth}{1}
\tableofcontents

\section{Introduction}\label{sec:intro}

Mallows permutations~\cite{mallows1957non} have recently started to attract more attention as a natural model to generalize uniform permutations.
In particular, the last decade has seen an increasing number of articles studying structural properties of Mallows permutations, such as the length of the longest monotone subsequence~\cite{basu2017limit,bhatnagar2015lengths,mueller2013length}, the number of cycles~\cite{gladkich2018cycle,he2022cycles,mukherjee2016fixed}, or the occurrence of patterns~\cite{crane2018probability,he2022central,pinsky2021permutations}.

A notable property of Mallows permutations is their strong regenerative structure~\cite{pitman2019regenerative} which allowed their definition to be extended to more complex sets than $[n]:=\{1,\ldots,n\}$, such as the one-sided infinite Mallows permutation on $\bN=\{1,2,\ldots\}$~\cite{gnedin2010q}, or the two-sided infinite Mallows permutation on $\bZ=\{\ldots,-1,0,1,\ldots\}$~\cite{gnedin2012two}.
In this work, we further highlight the relation between Mallows permutations on $[n]$, $\bN$, and $\bZ$ by studying the limit under various topologies of \textit{Mallows trees}, defined to be binary search trees drawn from Mallows permutations.

The bulk of this work focuses on the topology of the local convergence of graphs, as introduced by Benjamini and Schramm~\cite{benjamini2001recurrence}.
In that case, we show that the limit of a finite Mallows tree is almost-surely an adequately defined binary search tree on a two-sided infinite Mallows permutation (Theorem~\ref{thm:conv}).
This result is interesting to compare with the rooted local topology, where the limit of a finite Mallows tree is the binary search tree drawn from the infinite one-sided Mallows distribution (Theorem~\ref{thm:rooted}).
Finally, two other natural topologies on trees are considered: the Gromov-Hausforff-Prokhorov topology, where trees are seen as metric spaces, for which the limit is a simple metric on $[0,1]$ (Theorem~\ref{thm:ghp}), and the recently introduced subtree size topology~\cite{grubel2023note}, for which the limit is the infinite rightward path (Theorem~\ref{thm:ssc}).

\medskip

The main result of this work (Theorem~\ref{thm:conv}) provides an interesting limit swap between binary search tree structures and local limits of discrete objects.
Thus it should not fully come as a surprise:
the local structure of a finite Mallows tree seen from a uniformly chosen node only depends on the ordering of the surrounding values of a uniformly chosen entry in the permutation and behaves similarly to the tree corresponding to a two-sided infinite Mallows permutation seen from any entry.
Although swapping the two limits seems natural here, it is worth noting that a key ingredient in the proof relies on the relation between displacements in Mallows permutations and geometric random variables, and the fact that the sum of two geometric random variables is similar to the size-biased geometric distribution (see Section~\ref{sec:imt} and more precisely~\eqref{eq:geomsing} for more details on this regard).

Apart from the main results on the convergence of finite Mallows trees under various topologies, this article provides two noteworthy novelties.
First, it extends the notion of binary search tree from one-sided infinite sequences to two-sided infinite sequences, under some mild assumptions.
This extension of binary search trees is referred to as \textit{redwood trees} (see Section~\ref{sec:disclaimer} for a discussion on the name and notations for redwood trees) and is defined and studied in Section~\ref{sec:redwood}.
Second, the main result of this article proves the almost-sure local convergence of a sequence of trees, joining the rather small family of almost-sure local convergence results~\cite{dembo2010ising}.
Moreover, while proving almost-sure local convergence often relies on applying the Borel-Cantelli lemma and thus does not directly require a properly defined common space, Mallows trees are, to the best of our knowledge, the first sequence of trees with a recursive construction for which such result is known.

\subsection{Main results}\label{sec:main}

For any $n\in\bN$ and $q\in[0,\infty)$, the Mallows distribution~\cite{mallows1957non} is the distribution $\pi_{n,q}$ on the symmetric group $\Sn$ defined by
\begin{align*}
    \pi_{n,q}(\sigma)&:=\frac{q^{\inv(\sigma)}}{Z_{n,q}}\,,
\end{align*}
where $\inv(\sigma):=|\{i<j:\sigma(i)>\sigma(j)\}|$ is the number of inversions of $\sigma$ and $Z_{n,q}:=\sum_{\sigma\in\Sn}q^{\inv(\sigma)}$ is a normalizing constant.
If $\sigma=(\sigma(1),\ldots,\sigma(n))\in\Sn$ is a random permutation distributed accorded to $\pi_{n,q}$, then the permutation $(\sigma(n),\ldots,\sigma(1))=\sigma\cdot(n,n-1,\ldots,1)$ is distributed according to $\pi_{n,1/q}$.
This provides a natural symmetry between Mallows permutations with parameters $q$ and $1/q$ and allows us to reduce the study of such permutations to $q\in[0,1]$.
Furthermore, the case $q=1$ corresponds to uniformly sampled permutations and thus tends to have very different properties from the case $q\neq 1$.
For these two reasons, we limit our study to the case $q\in[0,1)$ for the rest of the article (except in Section~\ref{sec:ledwood} where we briefly describe how to adapt our results to the case $q>1$).

The Mallows distribution happens to be the unique distribution on $\Sn$ which is $q$-invariant, that is such that
\begin{align*}
    \pi_{n,q}\big(\sigma\cdot(i~i+1)\big)=q^{\sign[\sigma(i+1)-\sigma(i)]}\cdot\pi_{n,q}(\sigma)\,,
\end{align*}
where $(i~i+1)$ is the transposition sending $i$ to $i+1$ and vice-versa, and $\sign[x]$ is the sign of $x$.
In fact, the $q$-invariant property happens to be a natural way to generalize Mallows permutations to one-sided infinite sets such as $\bN$~\cite{gnedin2010q} and two-sided infinite sets such as $\bZ$~\cite{gnedin2012two}.
We further describe the construction of one-sided and two-sided infinite Mallows permutations in Section~\ref{sec:mallows}.

\medskip

Let $\BST=\{\root\}\cup\bigcup_{k\geq1}\{\lv,\rv\}^k$ be the infinite binary tree seen as words on the alphabet $\{\lv,\rv\}$, where $\root$ corresponds equivalently to the root of the tree and the empty word.
We further identify binary trees with subsets of $\BST$ rooted at $\root$ and connected in the graph sense, that is subsets $T\subseteq\BST$ stable by considering any prefix of a word.

For any finite sequence of distinct values $x=(x_1,\ldots,x_n)$, define $\bst{x}$ the binary search tree of $x$ as follows.
If $n=0$ and $x$ is empty, then $\bst{x}=\emptyset$.
Otherwise, if $n\geq1$, write $x_-=(x_i:x_i<x_1)$ (respectively $x_+=(x_i:x_i>x_1)$) for the sequence of numbers smaller (respectively larger) than $x_1$ in the same order as in $x$ and let
\begin{align}\label{eq:bst}
    \bst{x}:=\{\root\}\cup\lv\bst{x_-}\cup\rv\bst{x_+}\,;
\end{align}
see Figure~\ref{fig:bst} for an example of the construction of $\bst{x}$.
This definition straightforwardly implies that $|\bst{x}|=n$.
It is also worth noting that, if $x=(x_1,\ldots,x_n)$, then for any sequence $\Bar{x}=(x_1,\ldots,x_n,x_{n+1},\ldots,x_{n+m})$ extending $x$, we have $\bst{x}\subseteq\bst{\Bar{x}}$.
Using this remark, we naturally extend the previous notations to infinite sequences $x=(x_1,x_2,\ldots)$ by letting
\begin{align*}
    \bst{x}:=\bigcup_{n=1}^\infty\bst{\big(x_1,\ldots,x_n\big)}\,.
\end{align*}
Since $\bst{(x_1,\ldots,x_n)}\subseteq\bst{(x_1,\ldots,x_n,x_{n+1})}$, the previous definition can actually be understood as $\bst{x}$ being the limit of $(\bst{(x_1,\ldots,x_n)})_{n\in\bN}$.

\def\permutation{1/4,2/2,3/5,4/1,5/6,6/3}
\def\sizeperm{6}
\def\edges{4/2,4/5,2/1,5/6,2/3}
\def\minusedges{2/1,2/3}
\def\plusedges{5/6}
\def\picturescale{0.5}
\def\gridlw{0.04cm}
\def\gridcolour{block}
\def\axislw{0.05cm}
\def\axiscolour{block}
\def\axisextra{0.5}
\def\axisinnersep{0.1cm}
\def\axisscale{1}
\def\nodescale{1.5}
\def\treecolour{theme}
\def\treelw{0.05cm}
\def\nodetextscale{1}
\def\highlightcolour{white!60!lighttheme}
\def\highlightlw{0.8cm}
\def\highlighttextcolour{lighttheme}

\begin{figure}[htb]
    \centering
    \begin{tikzpicture}[scale=\picturescale]
        \foreach\i\j in \permutation {
            \node(\j) at (\j,\sizeperm-\i+1){};
        }
        \foreach\i\j in \minusedges {
            \draw[\highlightcolour, line width=\highlightlw, line cap=round] (\i.center) -- (\j.center);
        }
        \foreach\i\j in \plusedges {
            \draw[\highlightcolour, line width=\highlightlw, line cap=round] (\i.center) -- (\j.center);
        }
        \draw[\gridcolour, line width=\gridlw, line cap=round, opacity=0.2] (0,0) grid (\fpeval{\sizeperm+1},\fpeval{\sizeperm+1});
        \draw[->, \axiscolour, line width=\axislw, line cap=round] (0,\fpeval{\sizeperm+1}) -- (\fpeval{\sizeperm+1+\axisextra},\fpeval{\sizeperm+1});
        \node[\axiscolour, scale=\axisscale, inner sep=\axisinnersep, anchor=west] at (\fpeval{\sizeperm+1+\axisextra},\fpeval{\sizeperm+1}){$\sigma(i)$};
        \draw[->, \axiscolour, line width=\axislw, line cap=round] (0,\fpeval{\sizeperm+1}) -- (0,-\axisextra);
        \node[\axiscolour, scale=\axisscale, inner sep=\axisinnersep, anchor=north] at (0,-\axisextra){$i$};
        \foreach\i\j in \edges {
            \draw[\treecolour, line width=\treelw] (\i.center) -- (\j.center);
        }
        \foreach\i\j in \permutation {
            \node[\treecolour, draw, circle, scale=\nodescale, line width=\treelw, fill=white] at (\j,\sizeperm-\i+1){};
            \node[\treecolour, scale=\nodetextscale] at (\j,\sizeperm-\i+1){$\mathbf{\j}$};
        }
        \node[\highlighttextcolour, scale=\nodetextscale] at (1,1.5){$\bst{x_-}$};
        \node[\highlighttextcolour, scale=\nodetextscale] at (7,4.5){$\bst{x_+}$};
        \node[\highlighttextcolour, scale=\nodetextscale] at (5,6.5){$x_1$};
    \end{tikzpicture}
    \caption{
        A representation of the binary search tree $\bst{x}$ for the sequence $x=(4,2,5,1,6,3)$.
        The tree is represented in dark green and the numbers on the node are the values of the corresponding $x_i$.
        In light green we show the construction of the binary search tree as defined in~\eqref{eq:bst}: $x_1$ (here $x_1=4$) is placed at the root of the tree and the sequence is split into $x_-$ composed of values lower than $x_1$ (here $x_-=(2,1,3)$) at the left and $x_+$ composed of values larger higher than $x_1$ (here $x_+=(5,6)$) at the right.
        The root and both left and right binary search trees are then combined as in~\eqref{eq:bst} to obtain the full binary search tree.
    }
    \label{fig:bst}
\end{figure}
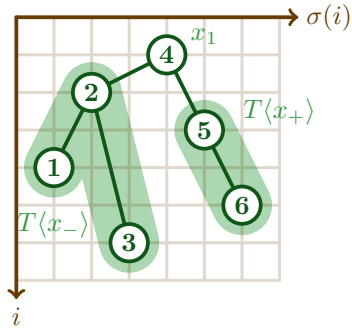

\medskip

Consider now trying to define binary search trees for a two-sided infinite sequence $x=(\ldots,x_{-1},x_0,x_1,\ldots)$.
A simple first case to consider would be $x$ such that $x_i=i$ for all $i\in\bZ$.
Since the binary search tree of the sequence $(1,2,\ldots)$ is the infinite right-downward path, we would naturally like the binary search tree of this $x$ to be the combination of the infinite right-downward and infinite left-upward paths.
We thus start by defining the notion of \textit{left-upward} in a binary tree.

Since right-downward paths are encoded by $\rv^k$ for $k\geq0$, we want to encode left-upward paths with $\rv^k$ for $k<0$.
Naturally, negative powers of words are not properly defined, but we can add a new symbol $\uv$ (\textit{up}silon) to our alphabet $\{\lv,\rv\}$ to describe $\rv^{-1}$.
From now on, we thus use the convention that
\begin{align}\label{eq:alphabet}
    \rv^k=\left\{\begin{array}{ll}
        \rv^k & \textrm{if $k>0$} \\
        \root & \textrm{if $k=0$} \\
        \uv^{|k|} & \textrm{if $k<0$}\,,
    \end{array}\right.
\end{align}
and we let the extension of the infinite binary tree be
\begin{align*}
    \RIBST:=\bigcup_{k\in\bZ}\{\rv^k\}\cup\rv^k\lv\BST=\BST\cup\left(\bigcup_{k\geq1}\{\uv^k\}\cup\uv^k\lv\BST\right)\,.
\end{align*}
We further let $\leftT:=\{\root\}\cup\lv\BST$ be the binary tree obtained by removing the first right edge of the complete binary tree;
this tree will play a key role in our study, in particular since its shifted sequence $(\rv^k\leftT)_{k\in\bZ}$ is a partition of $\RIBST$.

Let $x=(\ldots,x_{-1},x_0,x_1,\ldots)$ be a two-sided infinite sequence with distinct values.
Since the binary search tree of an infinite one-sided sequence is built as the limit of the tree obtained from the first $n$ indices, we want to define the binary search tree of $x$ as the limit of $\bst{x(n)}$ where $x(n)=\{x_{-n},x_{-n+1},\ldots,x_n\}$.
Unfortunately, this approach does not directly work, in particular since $x_{-n}$ plays too big a role in the structure of $\bst{x(n)}$.
However, by re-rooting $\bst{x(n)}$ according to some shift $s_n$ so that the tree is approximately as high as it is low, we obtain a sequence of trees $(\rv^{-s_n}\bst{x(n)})_{n\in\bN}$ which are not directly subtrees of each other, but have a growth-like behaviour.
Indeed, under mild assumptions on $x$, this sequence of trees eventually has a fixed structure on any ball of finite radius around the root $\root$.
This allows us to define $\ribst{x}\subseteq\RIBST$ as the limit of $(\rv^{-s_n}\bst{x(n)})_{n\in\bN}$ on any finite ball, corresponding to the generalization of binary search trees to two-sided infinite sequences.
We call $\ribst{x}$ the \textit{redwood tree} of $x$ and refer to Section~\ref{sec:redwood} for a more precise definition.

\medskip

Fix $q\in[0,1)$ and let $\imp$ be a two-sided infinite Mallows permutation on $\bZ$.
Write $Y_q=\ribst{\imp}$ for the redwood tree drawn from $\imp$ and let $\imt=(\imroot,Y_q)$ be the rooted tree where $\imroot$ is uniformly sampled from the set
\begin{align*}
    Y_q\cap\leftT=\{\root\}\cup\Big(Y_q\cap\lv\BST\Big)\,.
\end{align*}
In other words, $\imroot$ is a uniform element chosen from the set composed of the root of the tree and the left subtree of the root.
We now state our main theorem.

\begin{thm}[Local limit of Mallows trees]\label{thm:conv}
    Fix $q\in[0,1)$.
    For all $n\in\bN$, let $\mp{n}$ be distributed as $\pi_{n,q}$.
    Then we have
    \begin{align*}
        \bst{\mp{n}}\localconv\imt\,,
    \end{align*}
    where the convergence occurs according to the almost-sure local topology, defined in Section~\ref{sec:local}.
\end{thm}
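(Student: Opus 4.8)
The plan is to reduce the asserted almost-sure local convergence to the almost-sure convergence of neighbourhood frequencies and to compute their limit through a bounded window of the permutation. For a finite rooted tree $t$ and a radius $r$, write $N_n(t)$ for the number of nodes $v$ of $\bst{\mp{n}}$ whose radius-$r$ ball is isomorphic to $t$; the statement amounts to proving that $N_n(t)/n$ converges almost surely to the probability that the radius-$r$ ball around $\imroot$ in $\imt$ equals $t$, for every $t$ and $r$. The node inserted at time $i$ is the value $\mp{n}(i)$, the insertion order being the position order, so $N_n(t)/n$ is the average over $i\in[n]$ of an indicator depending on the radius-$r$ ball around that node. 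The structural point is that this ball is a deterministic function of the relative order of the values $\mp{n}(j)$ for $j$ in a window of positions around $i$: the ancestors are obtained by iterating the nearest-rank predecessor and successor among earlier positions, keeping at each step the more recently inserted of the two, while the descendants up to depth $r$ are the first values falling into the nested rank-intervals cut out by the ancestors. Since $q\in[0,1)$, displacements of the Mallows permutation are stochastically dominated by geometric random variables, so the relevant rank-neighbours of $\mp{n}(i)$ lie within a window of bounded size with overwhelming probability, and each indicator reduces to a bounded, position-local functional of the local pattern of $\mp{n}$ around $i$.

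I would then compute the limit of $\expec[N_n(t)]/n$ through the local convergence of the permutation itself: around a uniform position the finite Mallows permutation converges to the two-sided infinite Mallows permutation $\imp$, which is exactly where its regenerative structure and the geometric description of displacements enter, together with the identity \eqref{eq:geomsing} expressing that a sum of two geometric variables behaves like a size-biased geometric. Evaluating the bounded functional of the first step on this limit, the expected frequency converges to the probability that the ball around the origin of the binary search tree built from $\imp$ equals $t$. It remains to identify this with the law of the ball around $\imroot$ in $\imt$: the re-rooting by $\rv^{-\reroot}$ in the definition of $\ribst{\imp}$ places the origin on the spine $(\rv^k)_{k\in\bZ}$, and by translation invariance of $\imp$ along $\bZ$ the fundamental domain $\leftT$ of the partition $(\rv^k\leftT)_{k\in\bZ}$ carries the correct weighting; the uniform choice of $\imroot$ in $Y_q\cap\leftT$ reproduces exactly this measure, giving the desired limit.

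To obtain almost-sure rather than merely in-expectation or in-probability convergence, I would control the fluctuations of $N_n(t)/n$. Because each summand depends only on a window of bounded size around its position and the Mallows permutation has short-range dependence — again quantified by the geometric tails — indicators attached to far-apart positions are essentially independent. This yields a variance bound, or a bounded-difference estimate, strong enough that $\prob[\,|N_n(t)/n-\expec N_n(t)/n|>\varepsilon\,]$ is summable in $n$; Borel--Cantelli then promotes convergence in probability to almost-sure convergence of every frequency $N_n(t)/n$, which is the almost-sure local convergence in the statement.

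The hardest part will be the identification in the second paragraph, namely controlling the re-rooting shift $\reroot$ so that the limiting ball around the origin is genuinely the one described by $\imt$. Since the trees $\rv^{-\reroot}\bst{x(n)}$ entering the definition of $\ribst{\imp}$ are not nested, I expect the main work to lie in showing, through the geometric and regenerative estimates, that $\reroot$ concentrates sharply enough for the spine near the origin to be eventually frozen, so that the ball around $\root$ stabilises; and, on the finite side, in making the truncation of the window and the comparison with $\imp$ uniform enough over the position $i$ for both the limit computation and the concentration bound to go through.
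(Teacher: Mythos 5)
Your plan is a genuinely different route from the paper's (which works with the spine decomposition of a coupled infinite tree rather than with local patterns of the permutation), and several of its ingredients are pointed in the right direction, but two steps have real gaps as written.

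The first is the promotion to almost-sure convergence. You propose to bound $\prob\big(|N_n(t)/n-\expec N_n(t)/n|>\varepsilon\big)$ by ``a variance bound, or a bounded-difference estimate'' and apply Borel--Cantelli. A variance bound coming from geometrically decaying dependence only gives $\mathrm{Var}(N_n(t)/n)=O(1/n)$, hence Chebyshev tails of order $1/n$, which are not summable; and a bounded-difference inequality is problematic here because altering the value at a single position can relocate an entire subtree and change the radius-$r$ ball of linearly many nodes, so McDiarmid does not apply off the shelf. You would need at least a fourth-moment bound for truncated finite-window functionals plus control of the truncation error. The paper sidesteps this entirely: it fixes the infinite Bernoulli coupling so that $\bst{\mp{n}}\subseteq\bst{\mp{\infty}}$, replaces the empirical sum (up to errors governed almost surely by $\free{n}$ and $\reco{n}$, Lemmas~\ref{lem:records} and~\ref{lem:spaces}) by a sum over the i.i.d.\ left subtrees $(\mt{\infty}(k))_{k\geq0}$ of the single fixed tree $\mt{\infty}$, which have only finite-range interaction, and invokes the strong law of large numbers (Propositions~\ref{prop:new target} and~\ref{prop:a limit}). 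The almost-sure statement is thus proved for that coupling; your concentration route would give a coupling-free statement, which is exactly why it is harder and why the asserted estimate cannot be waved through.

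The second gap is the identification of the limit, which you yourself flag as the hardest part but then treat as two invocable facts: that the pattern of $\mp{n}$ around a uniform position converges to the two-sided Mallows permutation $\imp$, and that uniformity of $\imroot$ on $Y_q\cap\leftT$ ``reproduces exactly'' the law seen from the index-$0$ node of $\bst{\imp}$. The first is an unproven input of difficulty comparable to the theorem itself. The second is precisely where the inspection-paradox issue lives: a uniform node lands in a spine subtree with probability proportional to its size, so the containing subtree must come out size-biased, and one has to verify that this matches $Y_q(0)\sim\sbgeom{1-q}$ while the neighbouring subtrees remain $\geom{1-q}$ and independent. In the paper this consumes Sections~\ref{sec:rec imt}--\ref{sec:imt}: the record analysis of $\imp$ (Propositions~\ref{prop:records imp} and~\ref{prop:shift rec}), the characterization of two-sided geometric sequences via the maps $\Phi_n$ (Theorem~\ref{thm:poisson}) leading to Theorem~\ref{thm:imt}, and the size-biasing Lemma~\ref{lem:size biased}, with~\eqref{eq:geomsing} as the pivot. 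You correctly name~\eqref{eq:geomsing} and the stabilization of the re-rooting shift, so the skeleton is right, but the Palm-type exchange between ``uniform node'' and ``fixed index of a stationary permutation'' is the theorem's actual content and must be carried out rather than cited.
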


The proof of Theorem~\ref{thm:conv} can be found in Section~\ref{sec:local limit}.
This result states an interesting limit swap between binary search tree structures and random permutation models;
indeed, it can be interpreted as ``the limit of the binary search tree of a finite Mallows permutation is the binary search tree of the infinite Mallows permutation''.
Another important fact related to this theorem is that, to the best of our knowledge, this is the first occurrence of a non-trivial almost-sure local convergence for naturally growing sequences of graphs.
This strong type of convergence arises from the the regenerative property of Mallows permutations and heavily relies on the strong law of large numbers.
To complement Theorem~\ref{thm:conv}, we now provide a clearer characterization of the distribution of $\imt$.
For the rest of this work, we write $\pgeom{p}$ for the geometric distribution with parameter $p$ and support $\bN=\{1,2,\ldots\}$, whereas $\geom{p}$ refers to the geometric distribution with $0$ in its support.
We further let $\sbgeom{p}$ be the sized biased version of $\geom{p}$ subtracted by $1$.
All these distributions are summarized by
\begin{align}\label{eq:all geoms}
    \prob(G=k)=\left\{\begin{array}{ll}
        (1-p)^kp & \textrm{for $k\geq0$, if $G\sim\geom{p}$} \\
        (1-p)^{k-1}p & \textrm{for $k\geq1$, if $G\sim\pgeom{p}$} \\
        (k+1)(1-p)^kp^2 & \textrm{for $k\geq0$, if $G\sim\sbgeom{p}$} \\
    \end{array}\right.
\end{align}

\begin{thm}[Distribution of the redwood Mallows tree]\label{thm:imt}
    Fix $q\in[0,1)$ and let $Y_q$ be the redwood tree drawn from a two-sided infinite Mallows permutation.
    For any $k\in\bZ$, let
    \begin{align*}
        Y_q(k)=Y_q\cap\rv^k\lv\BST=\big(Y_q\cap\rv^k\leftT\big)\setminus\{\rv^k\}
    \end{align*}
    be the left subtree of the node $\rv^k$.
    Then $(Y_q(k))_{k\in\bZ}$ is a two-sided sequence of independent Mallows trees with parameter $q$ and size $\geom{1-q}$, except for $Y_q(0)$ having size $\sbgeom{1-q}$.
    Moreover, the structure of the tree is independent given the size, meaning that for $n\geq0$ and $t$ of size $n$, we have
    \begin{align*}
        \prob\Big(Y_q(k)=\rv^k\lv t~\Big|~\big|Y_q(k)\big|=n\Big)=\prob\Big(\bst{\mp{n}}=t\Big)\,,
    \end{align*}
    where $\mp{n}$ is $\pi_{n,q}$-distributed.
    Conversely, these properties fully characterize the distribution of $Y_q$.
\end{thm}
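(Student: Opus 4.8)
The plan is to work directly with the explicit description of the redwood tree $Y_q=\ribst{\imp}$ from Section~\ref{sec:redwood} and to read off the decomposition along its spine. First I would record that, since the spine node $\rv^k$ of a redwood tree carries the $k$-th left-to-right maximum (record) of the underlying sequence read from $-\infty$, the blocks $\rv^k\leftT$ that partition $\RIBST$ correspond to the record structure of $\imp$: if $(R_k)_{k\in\bZ}$ denotes the increasing bi-infinite sequence of record values of $\imp$ (those values $v$ whose position is smaller than the position of every larger value), then $Y_q(k)$ consists exactly of the values lying strictly between the consecutive records $R_{k-1}$ and $R_k$, listed in order of their positions. Because $\imp$ is $q$-close to the identity for $q\in[0,1)$, I would first check that records are almost surely bi-infinite and that $\sup_{j<i}\imp(j)$ is finite for every $i$, so that this description is well defined; this also yields $|Y_q(k)|=R_k-R_{k-1}-1$, reducing everything to the joint law of the record gaps together with the internal patterns they carry.

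The heart of the argument is then the regenerative structure of the two-sided Mallows permutation (see \cite{gnedin2012two,pitman2019regenerative}). I would use it to establish two things simultaneously: that conditionally on the gap sizes the values inside distinct gaps are mutually independent and, within a gap of size $n$, form a $\pi_{n,q}$-pattern; and that the successive gap sizes $(R_k-R_{k-1}-1)_{k\neq 0}$ are independent with law $\geom{1-q}$. Both follow from the sequential ``insert the $(\textsc{Geometric}+1)$-th unused value'' construction of Mallows permutations: the event that a new record is created and the amount by which it overshoots the previous record are governed by independent geometric variables, and the memorylessness of the geometric distribution makes the overshoot, hence the next gap, independent of the past. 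The internal pattern claim $\prob(Y_q(k)=\rv^k\lv t\mid |Y_q(k)|=n)=\prob(\bst{\mp{n}}=t)$ comes from the fact that, conditioned on occupying a prescribed value-interval delimited by two consecutive records, the values appear in a relative order distributed as $\pi_{n,q}$, so that $Y_q(k)$ is the binary search tree of a finite Mallows permutation.

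What distinguishes the index $k=0$ is the re-rooting rule defining $\reroot$ in Section~\ref{sec:redwood}: it pins the root of $Y_q$ so that the central block $Y_q(0)$ is the record gap straddling a stationary reference rather than a typical gap. By the usual inspection-paradox effect, selecting the gap that contains a fixed reference point size-biases it, and the reference further splits it into the values below and the values above, which are two independent $\geom{1-q}$ contributions. Invoking the identity~\eqref{eq:geomsing}, that a sum of two independent $\geom{1-q}$ variables has law $\sbgeom{1-q}$, I would conclude that $|Y_q(0)|\sim\sbgeom{1-q}$, independent of the other gaps and carrying the same conditional $\pi_{n,q}$-pattern; this is also precisely the size-biasing predicted by the uniform choice of $\imroot$ in $\{\root\}\cup Y_q(0)$ underlying Theorem~\ref{thm:conv}. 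Finally, since the blocks $(\rv^k\leftT)_{k\in\bZ}$ partition $\RIBST$ and the spine is almost surely present, $Y_q$ is determined by the family $(Y_q(k))_{k\in\bZ}$; the independence, the marginal sizes, and the conditional structure just established specify the law of this family completely, which gives the converse characterization.

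I expect the main obstacle to be the second step together with the $k=0$ analysis: records do not coincide with the clean regeneration (split) points of $\imp$, so the independence of successive record gaps is not immediate from block independence and must be extracted from the memoryless/renewal structure with some care; and making rigorous the claim that the limiting re-rooting $\reroot$ selects exactly the size-biased central gap and splits it into two independent geometrics is the delicate quantitative point linking the redwood construction to the distributional statement.
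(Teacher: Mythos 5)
Your decomposition of $Y_q$ along the spine via the records of $\imp$, and your argument that conditionally on the record positions the patterns between consecutive records are independent and $\pi_{n,q}$-distributed (hence give Mallows trees of the prescribed sizes), match the paper's proof exactly. The genuine gap is in the distributional claim about the record gaps. Your justification --- the sequential geometric-insertion construction plus memorylessness of the overshoot --- does establish that the gaps between consecutive \emph{positive} records are independent $\pgeom{1-q}$ variables (this is Corollary~\ref{cor:dist recs}, inherited from the records of $\mp{+}$). But it does not extend to the records with non-positive values: in the construction of $\imp$ from the triplet $(\mp{+},\mp{-},\Lambda)$, those records correspond to \emph{anti-records} of $\mp{-}$ (Proposition~\ref{prop:records imp}), and the anti-record gaps of a one-sided Mallows permutation are \emph{not} independent geometrics --- there is no sequential renewal running leftwards from which to extract memorylessness, since a record at a negative index is defined by an infinite past. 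The same issue affects your claim that the central gap splits into ``two independent $\geom{1-q}$ contributions'': the piece below the reference value is exactly one of these problematic leftward gaps. You flag this as the main obstacle, but you do not resolve it.

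The paper closes this gap with a genuinely different mechanism: translation invariance of the two-sided Mallows law (Proposition~\ref{prop:shift rec}) yields a distributional fixed-point equation $\Phi_n(G)\disteq G$ for the two-sided gap sequence (Corollary~\ref{cor:shift recs}), and Theorem~\ref{thm:poisson} shows --- via comparison with a Bernoulli point process on $\bZ$ --- that this equation together with the known law of the positive gaps \emph{uniquely} forces the whole two-sided sequence to be independent $\pgeom{1-q}$. Only after that does the identity~\eqref{eq:geomsing} (a sum of two independent geometrics has law $\sbgeom{1-q}$) deliver the size-biased law of $|Y_q(0)|$; the inspection-paradox picture you invoke is the correct intuition, and is how the paper motivates the result in the introduction, but it is not itself the proof. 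To complete your argument you would need either this uniqueness step or an independent proof that the leftward record gaps of $\imp$ are independent geometrics.
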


The proof of Theorem~\ref{thm:imt} can be found in Section~\ref{sec:imt}.
The distribution of the left subtrees in $Y_q$ follows from the global structure of Mallows trees as studied in~\cite[Section~7]{evans2012trickle}, in particular the fact that the subtree of a Mallows tree is, conditionally given its size, a Mallows tree itself.
Similarly, the fact that their sizes are geometrically distributed follows from the distribution of the distance between two consecutive records in a Mallows permutations and was already observed in the case of the one-sided infinite Mallows distribution in~\cite[Lemma~1.10]{addario2021height}.
What might come as a surprise however, is the size-biased distribution of the left subtree of the root.
This can be understood by seeing $Y_q$ as the limiting object of $\bst{\mp{n}}$.
Indeed, since the sizes of the left subtrees in a finite Mallows tree converge to a sequence of independent geometric random variables, the size of the left subtree containing a uniformly chosen node would have the size-biased version of the geometric distribution.
The fact that $Y_q$, defined as the redwood tree drawn from the two-sided infinite Mallows distribution, also has this property arises from the fact that the size-biased geometric distribution subtracted by $1$ and the sum of two independent geometric random variables have the same distribution.
More details regarding this property and its relation to the proof of Theorem~\ref{thm:imt} can be found in Section~\ref{sec:imt} and is in particular shown in~\eqref{eq:geomsing}.

\medskip

We conclude this section with three other limits of Mallows trees, depending on the topology considered.
The first one is the \textit{local rooted convergence}, or simply \textit{rooted convergence}: if instead of considering the local behaviour of a uniformly sampled node in the tree, we fix one (the root), and consider the structure of the tree as seen from this node.
This yields the classical binary search tree drawn from a Mallows permutation on $\bN$, as stated below.

\begin{thm}[Rooted limit of Mallows trees]\label{thm:rooted}
    Fix $q\in[0,1)$.
    For any $n\in\bN$, let $\mp{n}$ be $\pi_{n,q}$-distributed.
    Then we have
    \begin{align*}
        \bst{\mp{n}}\rootconv\bst{\mp{\infty}}\,,
    \end{align*}
    where $\mp{\infty}$ is a Mallows permutation on $\bN$ with parameter $q$ and the convergence occurs according to the rooted topology, defined in Section~\ref{sec:other local}.
\end{thm}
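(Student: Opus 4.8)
The plan is to establish the rooted convergence by controlling, for each fixed radius $r$, the radius-$r$ ball around $\root$. Since $\bst{\mp{n}}$ and $\bst{\mp{\infty}}$ are canonically realised as subsets of $\BST$ rooted at $\root$, this ball is simply the restriction $B_r(T):=\{w\in T:|w|\le r\}$ of a tree $T$ to words of length at most $r$, and two balls are rooted-isomorphic if and only if these restrictions are equal. By the definition of the rooted topology it therefore suffices to build, for each $r$, a coupling of $\mp{n}$ and $\mp{\infty}$ under which $\prob\big(B_r(\bst{\mp{n}})\neq B_r(\bst{\mp{\infty}})\big)\to0$ as $n\to\infty$; running this over all $r$ gives $\bst{\mp{n}}\rootconv\bst{\mp{\infty}}$.

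First I would set up the coupling from the sequential (inversion-table) sampling recalled in Section~\ref{sec:mallows}: both permutations are generated by a code $(c_i)$, where at step $i$ one places the $(c_i+1)$-st smallest still-unused value, with $c_i$ an independent $\geom{1-q}$ variable for $\mp{\infty}$ and the same law truncated to $\{0,\dots,n-i\}$ for $\mp{n}$. Coupling the codes coordinatewise and optimally gives $\prob(\exists\,i\le M:\,c_i^{(n)}\neq c_i)\le\sum_{i\le M}q^{\,n-i+1}\le q^{\,n-M+1}/(1-q)$, which tends to $0$ for fixed $M$ as $n\to\infty$. On the event that the codes agree up to $M$, the truncation forces $\max_{i\le M}(c_i+i)\le n$, and then the first $M$ placed values are the same integers in both permutations: adjoining the extra values of $[n]$ never alters which value is the $(c_i+1)$-st smallest remaining as long as one never selects the current maximum, so the two prefixes generate the same tree.

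Next I would record that $B_r(\bst{(\mp{\infty}(1),\dots,\mp{\infty}(M))})$ increases with $M$ (monotonicity of $\bst{\cdot}$ under extension) to the finite set $B_r(\bst{\mp{\infty}})$, so the event $A_M:=\{B_r(\bst{(\mp{\infty}(1),\dots,\mp{\infty}(M))})=B_r(\bst{\mp{\infty}})\}$ has probability tending to $1$. The heart of the argument is to upgrade agreement of prefixes to agreement of the full balls: on the intersection of $A_M$ with the event that the codes agree up to $M$, I would show by induction on the depth $d=0,\dots,r$ that each word $w$ with $|w|=d$ lies in $\bst{\mp{n}}$ if and only if it lies in $\bst{\mp{\infty}}$, with the same value when present. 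The base case is the common root value. For the inductive step, the matching ancestor values determine the same interval $I_w$ of values routed to $w$; its finite endpoint, if any, is an ancestor value $\le\max_{i\le M}\mp{\infty}(i)\le n$, whence $I_w\cap\bN=\varnothing$ exactly when $I_w\cap[n]=\varnothing$. Thus $w$ is occupied in the infinite tree if and only if it is occupied in the finite one, and when occupied its occupant is the first value of $I_w$ to appear, which on $A_M$ already lies in the common prefix. This closes the induction and yields $B_r(\bst{\mp{n}})=B_r(\bst{\mp{\infty}})$ on the event.

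Finally, combining the two estimates: given $\varepsilon>0$, choose $M$ with $\prob(A_M)\ge1-\varepsilon$ and then $n$ large enough that $\prob(c_i^{(n)}=c_i,\ i\le M)\ge1-\varepsilon$; the two balls then coincide off an event of probability at most $2\varepsilon$, which proves the required convergence for every $r$. The hard part is the inductive step of the third paragraph, namely ensuring that processing the tail $\mp{n}(M+1),\dots,\mp{n}(n)$ of the finite permutation cannot create a depth-$\le r$ node absent from $\bst{\mp{\infty}}$; this is exactly what the admissibility bound $\max_{i\le M}\mp{\infty}(i)\le n$ and the resulting identity $I_w\cap\bN=I_w\cap[n]$ for the relevant slots are used to rule out.
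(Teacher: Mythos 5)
Your proposal is correct, but it follows a genuinely different route from the paper's. The paper disposes of this theorem almost immediately by exploiting the coupling it has already fixed: under the infinite Bernoulli model of Section~\ref{sec:finite mallows} the trees $\bst{\mp{n}}$ form an \emph{increasing} sequence of subsets of $\BST$ whose union is $\bst{\mp{\infty}}$, so for each $r$ their intersection with $\bigcup_{k=0}^{r}\rv^k\leftT$ (an almost-surely finite set) stabilizes after an almost-surely finite time $N(r)$; hence $B_r(\root,\bst{\mp{n}})=B_r(\root,\bst{\mp{\infty}})$ eventually, and the distributional statement follows in one line. You instead build a fresh, non-monotone coupling from the Lehmer codes (truncated versus untruncated $\geom{1-q}$ entries, maximally coupled coordinatewise), and this forces the extra machinery: the total-variation bound $q^{n-M+1}/(1-q)$, the stabilization event $A_M$, and the induction on depth transferring agreement of prefixes to agreement of balls. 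All of that is sound, and it buys an explicit quantitative rate that the paper's argument does not provide; the cost is considerable extra length for a statement that the monotone coupling renders nearly trivial. One sentence needs care: the assertion that the finite endpoint of $I_w$ being an ancestor value at most $n$ yields $I_w\cap\bN=\varnothing$ exactly when $I_w\cap[n]=\varnothing$ fails in the edge case $I_w=(n,\infty)$, i.e.\ when the value $n$ occurs on the rightmost ancestral path of $w$: there $I_w\cap\bN\neq\varnothing$ while $I_w\cap[n]=\varnothing$. This configuration is in fact impossible on $A_M\cap\{\text{codes agree up to }M\}$, because the occupant of such a $w$ in $\bst{\mp{\infty}}$ would have index at most $M$ and hence value at most $n$, contradicting that it lies in $(n,\infty)$; so the induction closes, but the exclusion should be routed through $A_M$ rather than through the endpoint bound alone. (A cosmetic remark: two radius-$r$ balls can be rooted-isomorphic without the underlying subsets of $\BST$ being equal, since isomorphism forgets the left/right labels; you only use the harmless implication that equality of the subsets forces isomorphism, which is all the argument requires.)
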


The second one is the strong Gromov–Hausdorff–Prokhorov (GHP) topology and corresponds to the distributional limit of the tree seen as a metric space.
After proper scaling, arising from the height of the tree as shown in~\cite{addario2021height}, this yields a rather simple metric space with uniform measure on $[0,1]$.

\begin{thm}[Strong GHP limit of Mallows trees]\label{thm:ghp}
    Fix $q\in[0,1)$.
    For any $n\in\bN$, let $\mp{n}$ be $\pi_{n,q}$-distributed.
    Let $d_n$ be the graph distance in $\bst{\mp{n}}$ and $\mu_n$ be the uniform distribution on the nodes of $\bst{\mp{n}}$.
    Then we have
    \begin{align*}
        \Big(\bst{\mp{n}},\big((1-q)n\big)^{-1}\cdot d_n,\mu_n\Big)\ghpconv\ghplimit\,,
    \end{align*}
    where the convergence occurs according to the strong GHP topology, defined in Section~\ref{sec:ghp}, and
    \begin{align*}
        \ghplimit=\Big([0,1],d,\mu_{[0,1]}\Big)
    \end{align*}
    with $d(x,y)=|x-y|$ and $\mu_{[0,1]}$ being the uniform measure on $[0,1]$.
\end{thm}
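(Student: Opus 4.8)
The plan is to exploit the fact that, for $q<1$, the Mallows tree $\bst{\mp{n}}$ is \emph{path-like}: it consists of a long right spine carrying only short subtrees, so that after rescaling it is Hausdorff-close to an interval. Recall that the nodes $\rv^k$ form the right spine of $\bst{\mp{n}}$ and coincide with the left-to-right maxima (records) of $\mp{n}$, while the left subtree $\bst{\mp{n}}\cap\rv^k\lv\BST$ hanging off the $k$-th record is, conditionally on its size, itself a Mallows tree. As in Theorem~\ref{thm:imt}, and in its finite analogue obtained from the regenerative structure of Mallows permutations (the root's left subtree has size $\sigma(1)-1$, which is geometric, and the values exceeding $\sigma(1)$ form another Mallows permutation, upon which one recurses), these left subtrees have i.i.d.\ sizes distributed as $\geom{1-q}$, up to a boundary correction near the end of the permutation. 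Writing $R_n$ for the number of records and $G_1,\dots,G_{R_n}$ for the subtree sizes, we have $\sum_{k=1}^{R_n}(1+G_k)=n$; since $\expec[1+G_k]=1/(1-q)$, the strong law of large numbers (in its renewal form) gives $R_n/((1-q)n)\asconv 1$, so the right spine, a path on $R_n$ nodes, has rescaled length tending to $1$.

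First I would control the subtrees. Each $G_k\sim\geom{1-q}$ satisfies $\prob(G_k\geq j)=q^j$, so by a union bound and Borel--Cantelli the maximal subtree size $\max_k G_k$ is almost surely eventually $O(\log n)$; since the height of any subtree is bounded by its size, every node of $\bst{\mp{n}}$ lies within graph distance $O(\log n)=o((1-q)n)$ of the right spine. In particular the deepest node sits at depth $R_n-1+O(\log n)=(1-q)n(1+o(1))$, recovering the height scaling of~\cite{addario2021height} with constant $1$.

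I would then build the correspondence. Map each node $v$ to $\phi(v):=\mathrm{depth}(a(v))/((1-q)n)\in[0,1]$, where $a(v)$ is the unique record in whose left subtree $v$ lies (with $a(v)=v$ if $v$ is itself a record). The image of $\phi$ is the $1/((1-q)n)$-net $\{0,1,\dots,R_n-1\}/((1-q)n)$, which covers $[0,1]$ up to an error $o(1)$ by the previous step. Moreover, since $a(u)$ and $a(v)$ both lie on the spine, $d_n(a(u),a(v))=|\mathrm{depth}(a(u))-\mathrm{depth}(a(v))|$, and the triangle inequality gives $\big|d_n(u,v)-|\mathrm{depth}(a(u))-\mathrm{depth}(a(v))|\big|\le 2\max_k G_k=O(\log n)$, so the rescaled metric distortion of $\phi$ is $O(\log n/((1-q)n))\to 0$. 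This already yields Gromov--Hausdorff convergence of the rescaled tree to $([0,1],|\cdot|)$.

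For the measure, the pushforward $\phi_\ast\mu_n$ places mass $(1+G_k)/n$ at the rescaled depth of the $k$-th record, so its cumulative distribution function is $F_n(t)=\tfrac1n\sum_{k\le t(1-q)n}(1+G_k)$. By the strong law of large numbers $F_n(t)\asconv t$ for each $t$, and since the limit is continuous and $F_n$ is monotone this upgrades to $\sup_t|F_n(t)-t|\asconv 0$; hence $\phi_\ast\mu_n$ converges to $\mu_{[0,1]}$ in the Prokhorov distance. Feeding $\phi$ and the coupling of $\mu_n$ with its pushforward into the correspondence-and-coupling characterization of the strong GHP distance then gives $\dghp\big(\bst{\mp{n}}\text{ rescaled},\ghplimit\big)\to 0$, proving the claim. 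The main obstacle is the first structural step: transferring the clean i.i.d.\ geometric block decomposition of the infinite redwood tree (Theorem~\ref{thm:imt}) to the \emph{finite} tree with controlled boundary effects, together with the uniform $O(\log n)$ bound on subtree heights that confines the whole tree to an $o(1)$-neighbourhood of its spine; once this is in place, the metric and measure statements are routine applications of the law of large numbers and of the correspondence bound for the GHP distance.
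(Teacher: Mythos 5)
Your proposal is correct and follows essentially the same route as the paper: decompose the tree into the right spine with hanging left subtrees whose sizes are (via the coupling with the infinite model) essentially i.i.d.\ $\geom{1-q}$, show the maximal subtree size is negligible after rescaling, and use the strong law of large numbers to identify a node's rescaled spine position with its mass rank, which controls both the metric distortion and the measure. The only difference is packaging: the paper's Lemma~\ref{lem:ghp cond} orders all $n$ nodes and matches node $i$ to the subinterval $[(i-1)/n,i/n]$ so that the coupling of measures comes for free, whereas you project onto the spine and verify the Hausdorff and Prokhorov parts separately (distortion bound plus uniform convergence of the pushforward's distribution function) --- both reduce to the same two probabilistic estimates, and the boundary issue you flag (finite versus infinite subtree sizes) is exactly what the paper's free-space bound of Lemma~\ref{lem:spaces} is designed to handle.
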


The third and last one is according to the newly introduced \textit{subtree size topology}~\cite{grubel2023note} and corresponds to the convergence of the ratio of nodes in each subtree.
Since left hanging subtrees have geometric sizes in a Mallows tree, this yields the infinite rightward path.

\begin{thm}[Subtree size limit of Mallows trees]\label{thm:ssc}
    Fix $q\in[0,1)$.
    For any $n\in\bN$, let $\mp{n}$ be $\pi_{n,q}$-distributed.
    Then we have
    \begin{align*}
        \bst{\mp{n}}\subtreeconv\psi_R\,,
    \end{align*}
    where
    \begin{align}\label{eq:def psiR}
        \psi_R(v)=\left\{
        \begin{array}{ll}
            1 & \textrm{if $v\in\{\rv^k:k\geq0\}$}\\
            0 & \textrm{otherwise}
        \end{array}\right.
    \end{align}
    and the convergence occurs according to the subtree size topology, defined in Section~\ref{sec:ssc}.
\end{thm}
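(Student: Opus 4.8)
The plan is to unwind the definition of the subtree size topology from Section~\ref{sec:ssc} and reduce the statement to the behaviour of the sizes of the left subtrees hanging off the rightward spine $\{\rv^k:k\geq0\}$. The subtree size function records, for each node $v\in\BST$, the proportion of mass carried by the subtree rooted at $v$: writing $S_n(v)=|\{w\in\bst{\mp{n}}:v\prefix w\}|$ for the number of nodes of $\bst{\mp{n}}$ lying in the subtree rooted at $v$, it suffices to show that $\psi_n(v):=S_n(v)/n\to\psi_R(v)$ in probability for every fixed $v\in\BST$. Since the index set $\BST$ is countable and the limit $\psi_R$ is deterministic, this pointwise-in-probability convergence yields convergence in the subtree size topology.

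The key structural input I would record first is that, for each fixed $k\geq0$, the size $L_n(k):=|\bst{\mp{n}}\cap\rv^k\lv\BST|$ of the $k$-th left subtree along the spine is tight, i.e.\ $O_P(1)$ as $n\to\infty$. This is the finite-$n$ counterpart of the spine description in Theorem~\ref{thm:imt}, where the spine left subtrees are $\geom{1-q}$-distributed, and follows from the regenerative structure of Mallows permutations together with \cite[Lemma~1.10]{addario2021height} and \cite[Section~7]{evans2012trickle}: for fixed $k$, $L_n(k)$ converges in distribution to a finite geometric law and is therefore bounded in probability. I would also note that the number of left-to-right maxima of $\mp{n}$, hence the length of the spine, tends to infinity in probability, so that for every fixed $k$ the node $\rv^k$ belongs to $\bst{\mp{n}}$ with probability tending to one.

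With this input the two cases are routine. For a spine node $v=\rv^k$, on the event $\{\rv^k\in\bst{\mp{n}}\}$ a node of $\bst{\mp{n}}$ fails to lie in the subtree rooted at $\rv^k$ precisely when it is one of the earlier spine nodes $\root,\rv,\ldots,\rv^{k-1}$ or lies in one of their left subtrees, so that $n-S_n(\rv^k)=k+\sum_{j=0}^{k-1}L_n(j)$. This is a sum of a fixed number of $O_P(1)$ terms, hence $O_P(1)=o_P(n)$, giving $\psi_n(\rv^k)\to1$. For an off-spine node $v$, let $\rv^k\lv$ be the shortest prefix of $v$ containing the symbol $\lv$; then the subtree rooted at $v$ is contained in the $k$-th left subtree, so $0\leq S_n(v)\leq L_n(k)$ and $\psi_n(v)\leq L_n(k)/n\to0$, giving $\psi_n(v)\to0=\psi_R(v)$. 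Combining the two cases over the countable set $\BST$ completes the argument.

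The main obstacle is the structural input of the second step, namely controlling the finite-$n$ left subtree sizes $L_n(k)$ and verifying that they do not grow with $n$; everything else is a soft combination of $O_P(1)$ bounds. I expect the cleanest route is to deduce the tightness of $L_n(k)$ from the displacement and record description of Mallows permutations used elsewhere in the paper, linking the left subtree sizes along the spine to distances between consecutive left-to-right maxima, rather than re-deriving the geometric limit from scratch. This lets Theorem~\ref{thm:ssc} follow almost immediately from the same geometric nature of the spine subtrees that underlies Theorem~\ref{thm:imt}.
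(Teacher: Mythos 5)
Your proposal is correct and follows essentially the same route as the paper: decompose $\bst{\mp{n}}$ along the rightward spine, observe that for a spine node $\rv^k$ the complement of its subtree consists of $k$ spine nodes plus the first $k$ hanging left subtrees, and that any off-spine node sits inside a single such left subtree, then use the (geometric, hence bounded) sizes of these left subtrees to conclude. The only difference is that the paper works under the infinite Bernoulli coupling, dominating all the finite-$n$ left subtrees by the almost-surely finite left subtrees of $\bst{\mp{\infty}}$, which upgrades your $O_P(1)$ bounds to almost-sure ones and yields the stronger almost-sure subtree size convergence.
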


The proofs of Theorem~\ref{thm:rooted}, Theorem~\ref{thm:ghp}, and Theorem~\ref{thm:ssc} can respectively be found in Section~\ref{sec:rooted}, Section~\ref{sec:ghp}, and Section~\ref{sec:ssc}.

\section{Notations and background work}\label{sec:notations}

In this section, we precise the definitions of the objects introduced in Section~\ref{sec:main} and introduce further notations and results useful for the proofs of the different theorems.

\subsection{Mallows permutations}\label{sec:mallows}

In this section, we describe useful constructions of finite, one-sided, and two-sided Mallows permutations.
These constructions will prove to be useful when studying the properties of the related binary search trees.

\subsubsection{Finite and one-sided Mallows permutations}\label{sec:finite mallows}

There exists various methods to sample Mallows permutations and, since we are interested in the limit of objects related to Mallows permutations with increasing size $n$, it is worth considering an adapted coupling.
For this reason, we consider the \textit{infinite Bernoulli model} as developed in~\cite[Section~1.4]{addario2021height} which we explain below.

Consider $B=(B_{i,j})_{i,j\geq1}$ an infinite matrix with independent $\bern{1-q}$ entries.
Let $I_1,I_2,\ldots$ be the first non-zero entry of each row not already chosen,
that is
\begin{align*}
    I_i=\min\Big\{j\notin\{I_1,\ldots,I_{i-1}\}:B_{i,j}=1\Big\}\,,
\end{align*}
and let $\mp{n}$ be the permutation corresponding to the ordering of $(I_1,\ldots,I_n)$.
Then, thanks to~\cite[Proposition~1.8]{addario2021height}, $\mp{n}$ is $\pi_{n,q}$-distributed.
Moreover, the sequence $\mp{\infty}=(I_1,I_2,\ldots)$ actually corresponds to a permutation, since every column of $B$ has at least one non-zero entry, and is distributed as the Mallows distribution on $\bN$~\cite[Definition~4.4]{gnedin2010q}.
From now on, we assume that $\mp{n}$, for $n\in\bN\cup\{\infty\}$, refers to the random variable obtained using the coupling from the infinite Bernoulli model.

We now state some useful notations and results from~\cite{addario2021height}.
First, let
\begin{align}\label{eq:records}
    \reco{n}:=\Big|\Big\{j:\forall i<j,\mp{n}(j)>\mp{n}(i)\Big\}\Big|=\Big|\Big\{j:\forall i<j,I_j>I_i\Big\}\Big|
\end{align}
be the number of records of $\mp{n}$ and
\begin{align*}
    \free{n}:=\max\big\{I_1,\ldots,I_n\big\}-n
\end{align*}
be the number of \textit{free spaces} in the tree $\bst{\mp{n}}$.
We refer to $\free{n}$ as the number of free spaces since it corresponds to the number of elements added to the left of $\rv^{\reco{n}}$ between $\bst{\mp{n}}$ and $\bst{\mp{\infty}}$;
indeed, a better definition of $\free{n}$ is
\begin{align}\label{eq:spaces}
    \free{n}=\left|\Big(\bst{\mp{\infty}}\cap\bigcup_{0\leq k\leq\reco{n}}\rv^k\leftT\Big)\setminus\bst{\mp{n}}\right|\,.
\end{align}
We refer to Figure~\ref{fig:ibm} for a visual representation of this property.
Interestingly, the previous equation implies that the number of left hanging subtrees in $\bst{\mp{n}}$ that are not full yet can be bounded by the number of free spaces
\begin{align}\label{eq:bound left}
    \left|\Big\{0\leq k\leq\reco{n}:\bst{\mp{\infty}}\cap\rv^k\leftT\neq\bst{\mp{n}}\cap\rv^k\leftT\Big\}\right|\leq\free{n}\,,
\end{align}
bound which will be useful in proving Theorem~\ref{thm:conv}.

\def\drawingxshift{11cm}
\def\drawingyshift{13cm}
\def\infocolour{black}
\def\infoinnersep{0.2cm}
\def\treehiddencolour{\highlightcolour}
\newcommand{\drawpermutation}[5]{
    \begin{scope}[xshift=#4*\drawingxshift,yshift=#5*\drawingyshift]
        \foreach\i\j in \permutation {
            \node(\j) at (\j,\sizeperm-\i+1){};
        }
        \draw[\gridcolour, line width=\gridlw, line cap=round, opacity=0.2] (0,0) grid (\fpeval{\sizeperm+1},\fpeval{\sizeperm+1});
        \draw[->, \axiscolour, line width=\axislw, line cap=round] (0,\fpeval{\sizeperm+1}) -- (\fpeval{\sizeperm+1},\fpeval{\sizeperm+1});
        \draw[->, \axiscolour, line width=\axislw, line cap=round] (0,\fpeval{\sizeperm+1}) -- (0,0);
        \foreach\i\j in \edges {
            \draw[\treehiddencolour, line width=\treelw] (\i.center) -- (\j.center);
        }
        \foreach\i\j in \permutation {
            \node[\treehiddencolour, draw, circle, scale=\nodescale, line width=\treelw, fill=white] at (\j,\sizeperm-\i+1){};
            \node[\treehiddencolour, scale=\nodetextscale] at (\j,\sizeperm-\i+1){$\mathbf{\j}$};
        }
        \foreach\i\j in #2 {
            \draw[\treecolour, line width=\treelw] (\i.center) -- (\j.center);
        }
        \foreach\i\j in #1 {
            \node[\treecolour, draw, circle, scale=\nodescale, line width=\treelw, fill=white] at (\j,\sizeperm-\i+1){};
            \node[\treecolour, scale=\nodetextscale] at (\j,\sizeperm-\i+1){$\mathbf{\j}$};
        }
        \node[\infocolour, scale=\nodetextscale, inner sep=\infoinnersep, anchor=north] at (\fpeval{\sizeperm/2+1/2},0){#3};
    \end{scope}
}

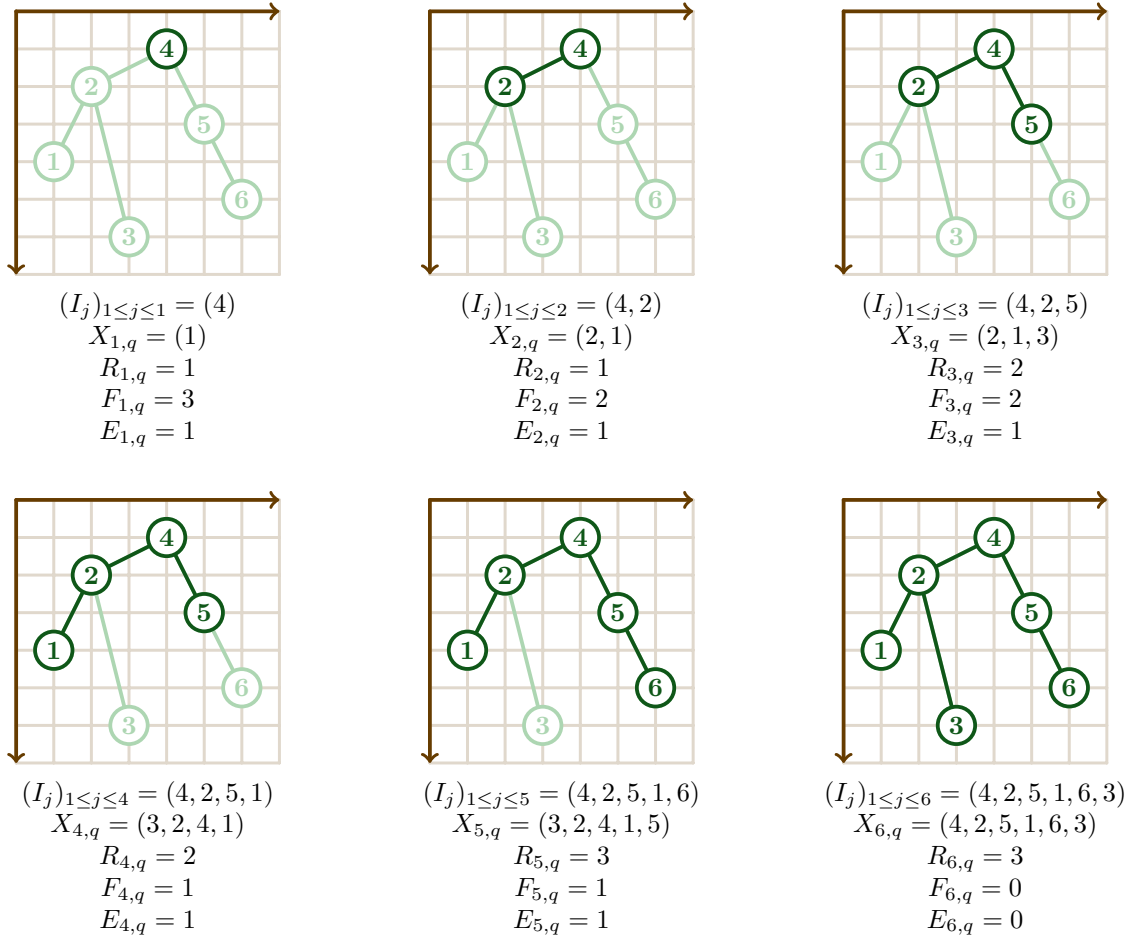
\begin{figure}[htb]
    \centering
    \begin{tikzpicture}[scale=\picturescale]
        \drawpermutation
        {{1/4}}
        {{}}
        {\begin{tabular}{c}
            $(I_j)_{1\leq j\leq1}=(4)$ \\
            $\mp{1}=(1)$ \\
            $\reco{1}=1$ \\
            $\free{1}=3$ \\
            $E_{1,q}=1$
        \end{tabular}}
        {0}{0}
        \drawpermutation
        {{1/4,2/2}}
        {{4/2}}
        {\begin{tabular}{c}
            $(I_j)_{1\leq j\leq2}=(4,2)$ \\
            $\mp{2}=(2,1)$ \\
            $\reco{2}=1$ \\
            $\free{2}=2$ \\
            $E_{2,q}=1$
        \end{tabular}}
        {1}{0}
        \drawpermutation
        {{1/4,2/2,3/5}}
        {{4/2,4/5}}
        {\begin{tabular}{c}
            $(I_j)_{1\leq j\leq3}=(4,2,5)$ \\
            $\mp{3}=(2,1,3)$ \\
            $\reco{3}=2$ \\
            $\free{3}=2$ \\
            $E_{3,q}=1$
        \end{tabular}}
        {2}{0}
        \drawpermutation
        {{1/4,2/2,3/5,4/1}}
        {{4/2,4/5,2/1}}
        {\begin{tabular}{c}
            $(I_j)_{1\leq j\leq4}=(4,2,5,1)$ \\
            $\mp{4}=(3,2,4,1)$ \\
            $\reco{4}=2$ \\
            $\free{4}=1$ \\
            $E_{4,q}=1$
        \end{tabular}}
        {0}{-1}
        \drawpermutation
        {{1/4,2/2,3/5,4/1,5/6}}
        {{4/2,4/5,2/1,5/6}}
        {\begin{tabular}{c}
            $(I_j)_{1\leq j\leq5}=(4,2,5,1,6)$ \\
            $\mp{5}=(3,2,4,1,5)$ \\
            $\reco{5}=3$ \\
            $\free{5}=1$ \\
            $E_{5,q}=1$
        \end{tabular}}
        {1}{-1}
        \drawpermutation
        {{1/4,2/2,3/5,4/1,5/6,6/3}}
        {{4/2,4/5,2/1,5/6,2/3}}
        {\begin{tabular}{c}
            $(I_j)_{1\leq j\leq6}=(4,2,5,1,6,3)$ \\
            $\mp{6}=(4,2,5,1,6,3)$ \\
            $\reco{6}=3$ \\
            $\free{6}=0$ \\
            $E_{6,q}=0$
        \end{tabular}}
        {2}{-1}
    \end{tikzpicture}
    \caption{
        The representation of the first few number of records and free spaces for the sequence $(I_j)_{j\geq1}=(4,2,5,1,6,3,\ldots)$ along with the corresponding permutations and binary search trees.
        We further provide the value of $E_{n,q}$, the number of left subtrees to be filled, as defined on the left hand side of~\eqref{eq:bound left}.
        From this representation, we verify the following properties.
        \textbf{(1)}~$\mp{n}$ is the ordering of the sequence $(I_1,\ldots,I_n)$.
        \textbf{(2)}~$\reco{n}$ is both the number of records of $\mp{n}$ and the number of nodes on the rightmost path of the tree.
        \textbf{(3)}~$\free{n}$ is both the difference between the maximal value of $(I_1,\ldots,I_n)$ and $n$ and the number of nodes to be filled on the left part of the tree (i.e. the number of shaded green nodes to the left of the rightmost dark green node).
        \textbf{(4)}~$E_{n,q}\leq\free{n}$ as claimed in~\eqref{eq:bound left} since the number of incomplete left subtrees is less than the number of left nodes to be filled.
    }
    \label{fig:ibm}
\end{figure}

We now provide two important results on $\reco{n}$ and $\free{n}$.
The first one states the asymptotic behaviour of $\reco{n}$ and has strong enough tail bounds to provide an almost-sure convergence after re-scaling.

\begin{lemma}\label{lem:records}
    Fix $q\in[0,1)$.
    Let $\reco{n}$ be the number of records of $\mp{n}$ coupled according to the infinite Bernoulli model.
    Then we have
    \begin{align*}
        \frac{\reco{n}-n(1-q)}{\sqrt{n}\log n}\asconv0\,.
    \end{align*}
    In particular, this implies that $\reco{n}/n\asconv(1-q)$
\end{lemma}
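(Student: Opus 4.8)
\emph{The plan} is to express $\reco{n}$ as a sum of independent indicator variables via the Lehmer-code factorisation of the Mallows measure, and then obtain almost-sure convergence at the scale $\sqrt{n}\log n$ through a sub-Gaussian concentration bound together with the Borel--Cantelli lemma. For $j\geq1$, set $c_j=|\{i<j:I_i>I_j\}|$, the number of earlier entries exceeding $I_j$; this quantity depends only on $I_1,\ldots,I_j$ and is therefore consistent across all $n\geq j$ under the infinite Bernoulli coupling. By~\eqref{eq:records}, position $j$ is a record exactly when no earlier entry is larger, i.e. when $c_j=0$, so that $\reco{n}=\sum_{j=1}^n A_j$ with $A_j=\mathbbm{1}[c_j=0]$. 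The key input is the classical fact that $\sigma\mapsto(c_1,\ldots,c_n)$ is a bijection from $\Sn$ onto $\prod_{j=1}^n\{0,\ldots,j-1\}$ under which $\inv(\sigma)=\sum_j c_j$; since $\pi_{n,q}(\sigma)\propto q^{\inv(\sigma)}=\prod_j q^{c_j}$, the coordinates $c_1,\ldots,c_n$ are independent with $c_j$ truncated geometric on $\{0,\ldots,j-1\}$, giving $\prob(A_j=1)=\prob(c_j=0)=(1-q)/(1-q^j)=:p_j$.

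\emph{First} I would make this independence rigorous in the coupled setting. For each fixed $n$ the vector $(c_1,\ldots,c_n)$ has the product law above, and because its first $m$ coordinates coincide for every $n\geq m$, a consistency (Kolmogorov) argument upgrades this to independence of the full sequence $(c_j)_{j\geq1}$, hence of $(A_j)_{j\geq1}$. I expect this consistency step to be the main obstacle, or at least the only point requiring genuine care: the rest reduces to a computation and a standard tail bound, but one must verify that the per-$n$ Lehmer-code independence really transfers to the infinite Bernoulli coupling rather than holding merely for each finite marginal in isolation.

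\emph{Next} I would control the mean. Writing $p_j-(1-q)=(1-q)\,q^j/(1-q^j)$ and using $1-q^j\geq1-q$ gives $0\leq p_j-(1-q)\leq q^j$, so $\sum_{j\geq1}\big(p_j-(1-q)\big)\leq q/(1-q)<\infty$. Hence $\expec\reco{n}=n(1-q)+O(1)$, and in particular $(\expec\reco{n}-n(1-q))/(\sqrt{n}\log n)\to0$.

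\emph{Finally}, since $\reco{n}-\expec\reco{n}=\sum_{j=1}^n(A_j-p_j)$ is a sum of independent centred variables each with range at most $1$, Hoeffding's inequality yields $\prob(|\reco{n}-\expec\reco{n}|\geq\varepsilon\sqrt{n}\log n)\leq2\exp(-2\varepsilon^2(\log n)^2)$ for every $\varepsilon>0$, a quantity summable in $n$. Borel--Cantelli then gives $|\reco{n}-\expec\reco{n}|<\varepsilon\sqrt{n}\log n$ eventually almost surely for each $\varepsilon$ (taking $\varepsilon=1/m$ and intersecting over $m\in\bN$), so $(\reco{n}-\expec\reco{n})/(\sqrt{n}\log n)\asconv0$. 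Combining with the mean estimate proves the claim, and the final assertion $\reco{n}/n\asconv(1-q)$ follows upon multiplying through by $\sqrt{n}\log n/n=\log n/\sqrt{n}\to0$.
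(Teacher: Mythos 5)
Your proof is correct, and it takes a genuinely different route from the one in the paper. The paper's proof is not self-contained: it imports a multiplicative Chernoff-type tail bound for $\reco{n}/\mu(n,q)$ from \cite[Lemma~2.4]{addario2021height} together with the estimate $\mu(n,q)=n(1-q)+O(\log n)$ from \cite[Proposition~2.5]{addario2021height}, then tracks the asymptotics of the exponent $\lambda(\epsilon)\sim\epsilon^2/2$ before invoking Borel--Cantelli. You instead rebuild the underlying structure from scratch: the Lehmer-code factorisation $\pi_{n,q}(\sigma)\propto\prod_jq^{c_j}$ exhibits $\reco{n}$ as a sum of independent indicators with success probabilities $p_j=(1-q)/(1-q^j)$, after which Hoeffding gives $2\exp(-2\varepsilon^2\log^2n)$, which is summable, and the mean shift $\sum_j(p_j-(1-q))\leq q/(1-q)$ is absorbed since it is $O(1)=o(\sqrt{n}\log n)$. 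All steps check out, including the one you flag as delicate: since $c_j=|\{i<j:I_i>I_j\}|$ is a function of $(I_1,\ldots,I_j)$ alone and hence does not depend on $n$, and since independence of an infinite family is by definition a statement about its finite subfamilies, the product form of each finite marginal already yields independence of the whole sequence under the coupling --- no genuine Kolmogorov extension is needed, so this step is even easier than you anticipated. The trade-off between the two arguments is the usual one: the paper's route is shorter on the page and its borrowed bound is multiplicative (hence sharper for deviations small relative to the mean, which matters elsewhere in \cite{addario2021height}), while yours is elementary, self-contained, and makes the independence mechanism --- which is ultimately what powers the cited lemma as well --- explicit; at the scale $\sqrt{n}\log n$ both concentration inequalities are more than sufficient.
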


\begin{proof}
    Using~\cite[Lemma~2.4]{addario2021height}, we know that, for any $\epsilon>0$ and $n\in\bN$, we have
    \begin{align*}
        \prob\left(\left|\frac{\reco{n}}{\mu(n,q)}-1\right|>\epsilon\right)&\leq\exp\left(-\left[1-(1+\epsilon)\log\left(\frac{e}{1+\epsilon}\right)\right]\mu(n,q)\right)\\
        &\pskip+\exp\left(-\left[1-\frac{1}{1+\epsilon}\log\big((1+\epsilon)e\big)\right]\mu(n,q)\right)\,,
    \end{align*}
    where
    \begin{align*}
        \mu(n,q)=\sum_{k=2}^n\frac{1-q}{1-q^k}\,.
    \end{align*}
    Now, let $\lambda(\epsilon)$ be the exponential power in the previous upper bound, that is
    \begin{align*}
        \lambda(\epsilon)=\min\left\{1-(1+\epsilon)\log\left(\frac{e}{1+\epsilon}\right),1-\frac{1}{1+\epsilon}\log\big((1+\epsilon)e\big)\right\}\,,
    \end{align*}
    and observe that $\lambda(\epsilon)\sim\epsilon^2/2$ as $\epsilon\rightarrow0$.
    Combining the previous results, we have that
    \begin{align*}
        \prob\left(\left|\frac{\reco{n}-n(1-q)}{\sqrt{n}\log n}\right|>\delta\right)&=\prob\left(\left|\frac{\reco{n}}{\mu(n,q)}-\frac{n(1-q)}{\mu(n,q)}\right|>\delta\cdot\frac{\sqrt{n}\log n}{\mu(n,q)}\right)\\
        &\leq\prob\left(\left|\frac{\reco{n}}{\mu(n,q)}-1\right|>\delta\cdot\frac{\sqrt{n}\log n}{\mu(n,q)}-\left|1-\frac{n(1-q)}{\mu(n,q)}\right|\right)\\
        &\leq2\exp\left(-\lambda\left(\delta\cdot\frac{\sqrt{n}\log n}{\mu(n,q)}-\left|1-\frac{n(1-q)}{\mu(n,q)}\right|\right)\cdot\mu(n,q)\right)\,.
    \end{align*}
    But now, thanks to~\cite[Proposition~2.5]{addario2021height}, we know that $\mu(n,q)=n(1-q)+O(\log n)$, which implies that
    \begin{align*}
        \delta\cdot\frac{\sqrt{n}\log n}{\mu(n,q)}-\left|1-\frac{n(1-q)}{\mu(n,q)}\right|&=\frac{\delta}{\sqrt{1-q}}\cdot\frac{\log n}{\sqrt{\mu(n,q)}}+o\left(\frac{\log n}{\sqrt{\mu(n,q)}}\right)\,.
    \end{align*}
    To conclude the proof of the lemma, recall that $\lambda(\epsilon)\sim\epsilon^2/2$ when $\epsilon\rightarrow0$ to obtain
    \begin{align*}
        \prob\left(\left|\frac{\reco{n}-n(1-q)}{\sqrt{n}\log n}\right|>\delta\right)&\leq\exp\left(-\frac{\delta^2\log^2n}{2(1-q)}+o(\log^2n)\right)
    \end{align*}
    and the almost-sure convergences follows from applying the Borel-Cantelli lemma.
\end{proof}

The second important result is related to $\free{n}$ and proves an other almost-sure convergence result.

\begin{lemma}\label{lem:spaces}
    Fix $q\in[0,1)$.
    Let $\free{n}$ be the number of free spaces as defined in~\eqref{eq:spaces}.
    Then, for any $(a_n)_{n\geq1}$ such that $a_n/\log n\rightarrow\infty$, we have
    \begin{align*}
        \frac{\free{n}}{a_n}\asconv0\,.
    \end{align*}
    In particular, this implies that
    \begin{align*}
        \frac{\free{n}}{n}\asconv0\,.
    \end{align*}
\end{lemma}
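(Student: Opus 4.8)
The plan is to derive a geometric tail bound on $\free{n}$ that is uniform in $n$, directly from the infinite Bernoulli model, and then feed it into the Borel--Cantelli lemma to obtain an almost-sure logarithmic bound on $\free{n}$; both stated convergences then follow immediately. Recall that $\free{n}=\max\{I_1,\ldots,I_n\}-n$, so that the event $\{\free{n}\geq k\}$ coincides with $\{\max\{I_1,\ldots,I_n\}\geq n+k\}$, which occurs if and only if $I_i\geq n+k$ for some $i\leq n$. (When $q=0$ one has $B_{i,j}\equiv1$, hence $I_i=i$ and $\free{n}=0$, so we may assume $q\in(0,1)$.)

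The key step is a conditional tail estimate for a single row. Let $\mathcal{F}_{i-1}$ denote the $\sigma$-algebra generated by the first $i-1$ rows of $B$, so that the occupied columns $\{I_1,\ldots,I_{i-1}\}$ are $\mathcal{F}_{i-1}$-measurable and number exactly $i-1$. For $I_i\geq m$ to hold with $m>i$, every still-available column $j<m$ must satisfy $B_{i,j}=0$; there are at least $(m-1)-(i-1)=m-i$ such columns, and since row $i$ consists of fresh $\bern{1-q}$ entries independent of $\mathcal{F}_{i-1}$, I obtain $\prob(I_i\geq m\mid\mathcal{F}_{i-1})\leq q^{m-i}$. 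Taking $m=n+k$ and summing the union bound over $i\leq n$ then gives
\[
    \prob\big(\free{n}\geq k\big)\leq\sum_{i=1}^n q^{\,n+k-i}\leq\frac{q^k}{1-q}\,,
\]
a geometric tail uniform in $n$.

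With this bound in hand, I would set $k=k_n=C\log n$ for a constant $C>-1/\log q$, so that $\prob(\free{n}\geq C\log n)\leq (1-q)^{-1}n^{\,C\log q}$ is summable in $n$. Borel--Cantelli then yields that, almost surely, $\free{n}<C\log n$ for all sufficiently large $n$, hence $\limsup_n\free{n}/\log n\leq C<\infty$ almost surely. Finally, for any sequence with $a_n/\log n\to\infty$, I would write $\free{n}/a_n=(\free{n}/\log n)\cdot(\log n/a_n)$ and note that on this almost-sure event the first factor is eventually bounded by $C$ while the second tends to $0$, giving $\free{n}/a_n\asconv0$; the particular case follows by taking $a_n=n$, since $n/\log n\to\infty$.

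The only delicate point is the conditional estimate $\prob(I_i\geq m\mid\mathcal{F}_{i-1})\leq q^{m-i}$: one must correctly count the available columns below $m$ (at least $m-i$ of them, because at most $i-1$ columns are occupied) and carefully invoke the independence of the fresh row $i$ from the history encoded in $\mathcal{F}_{i-1}$. Everything downstream of that estimate is a routine summability-and-Borel--Cantelli argument.
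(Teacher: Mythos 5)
Your proof is correct, and it takes a genuinely different route from the paper. The paper invokes the exact moment generating function $\expec[x^{\free{n}}]=\prod_{k=1}^n\frac{1-q^k}{1-q^kx}$ from the prior work of Addario-Berry and Corsini, applies a Chernoff bound at $x=1/\sqrt{q}$ to get $\prob(\free{n}/a_n>\epsilon)\leq\exp(-a_n\epsilon|\log q|/2+\sqrt{q}/(1-q))$, and then runs Borel--Cantelli separately for each fixed $\epsilon$ and sequence $(a_n)$. You instead work directly in the infinite Bernoulli model: your conditional estimate $\prob(I_i\geq m\mid\mathcal{F}_{i-1})\leq q^{m-i}$ is sound (at most $i-1$ of the columns below $m$ are occupied, and row $i$ is independent of the history), and the union bound over $i\leq n$ gives the uniform-in-$n$ geometric tail $\prob(\free{n}\geq k)\leq q^k/(1-q)$. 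This is consistent with the known fact that $\free{n}$ is a sum of independent $\geom{1-q^k}$ variables, whose tail is governed by the $k=1$ term. Your approach buys two things: it is self-contained (no citation of the MGF formula is needed), and it yields the stronger one-shot conclusion $\limsup_n\free{n}/\log n\leq C$ almost surely, from which every instance of the lemma follows by the deterministic factorization $\free{n}/a_n=(\free{n}/\log n)\cdot(\log n/a_n)$; the paper's route must redo the Borel--Cantelli argument for each $(a_n,\epsilon)$ but keeps closer track of the exact distribution. The only cosmetic point is that $k_n=C\log n$ is not an integer, but since $q<1$ the bound $\prob(\free{n}\geq k)\leq q^{k}/(1-q)$ remains valid for real $k$, so nothing breaks.
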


\begin{proof}
    Thanks to~\cite[Proposition~1.11]{addario2021height}, we know that the moment generating function of $\free{n}$ is
    \begin{align*}
        \expec\left[x^{\free{n}}\right]=\prod_{k=1}^n\frac{1-q^k}{1-q^kx}\,.
    \end{align*}
    Moreover, for any $k\geq1$ and $1<x<1/q$, we have
    \begin{align*}
        \frac{1-q^k}{1-q^kx}=1+(x-1)\frac{q^k}{1-q^kx}\leq1+(x-1)\frac{q^k}{1-qx}\,.
    \end{align*}
    Fix now $\epsilon>0$ and let $x=\sqrt{1/q}<1/q$.
    Then, using Chernoff's bounds along with the previous inequality, we have
    \begin{align*}
        \prob\left(\frac{\free{n}}{a_n}>\epsilon\right)&\leq x^{-a_n\epsilon}\expec\left[x^{\free{n}}\right]\leq q^\frac{a_n\epsilon}{2}\prod_{k=1}^n\left[1+q^{k-1/2}\right]\,,
    \end{align*}
    where we used that $(x-1)/(1-qx)=(1/\sqrt{q}-1)/(1-\sqrt{q})=1/\sqrt{q}$.
    Finally, using the convexity of exponential, we obtain
    \begin{align*}
        \prob\left(\frac{\free{n}}{a_n}>\epsilon\right)&\leq\exp\left(-a_n\cdot\frac{\epsilon|\log q|}{2}+\sum_{k=1}^nq^{k-1/2}\right)\leq\exp\left(-a_n\cdot\frac{\epsilon|\log q|}{2}+\frac{\sqrt{q}}{1-q}\right)\,,
    \end{align*}
    which, combined with the Borel-Cantelli lemma, proves the desired almost-sure convergence.
\end{proof}

\subsubsection{Two-sided infinite Mallows permutations}\label{sec:infinite mallows}

Using the results from~\cite{gnedin2012two}, we provide a construction of Mallows permutations on $\bZ$.
A permutation on $\bZ$ is said to be \textit{admissible} if
\begin{align}\label{eq:admissible}
    \big|\big\{i\in\bN:\sigma(i)\notin\bN\big\}\big|+\big|\big\{i\in\bZ\setminus\bN:\sigma(i)\in\bN\big\}\big|<\infty\,.
\end{align}
This property plays a key role here, since Mallows permutations on $\bZ$ are almost-surely admissible, and since admissibility will be used as a condition for the construction of redwood trees in Section~\ref{sec:redwood}.

Let $\mp{+},\mp{-}$ be two independent Mallows permutations on $\bN$.
Let $M=(M_1,M_2,\ldots)$ be a random sequence of independent non-negative integers such that, for all $i\geq1$, $M_i$ is distributed as a $\geom{1-q^i}$, that is, for all $m\in\bN$
\begin{align*}
    \prob(M_i=m)=q^{im}(1-q^i)\,.
\end{align*}
Note that this definition implies that $\sum_iM_i$ is almost-surely finite.
Let now $\Lambda=\Lambda(M)=(\Lambda_1\geq\Lambda_2\geq\ldots)$ be an infinite decreasing sequence such that, for all $i\geq1$, we have
\begin{align*}
    M_i=\big|\big\{j\geq1:\Lambda_j=i\big\}\big|\,.
\end{align*}
Using the notations of~\cite[Chapter I]{macdonald1998symmetric}, $\Lambda$ is a \textit{partition} and $M$ is the corresponding sequence of \textit{multiplicities};
in particular, $\Lambda$ is a random partition such that, for any $\lambda=(\lambda_1,\lambda_2,\ldots)$, we have
\begin{align*}
    \prob(\Lambda=\lambda)&=q^{|\lambda|}\cdot\prod_{k=1}^\infty(1-q^k)\,,
\end{align*}
where $|\lambda|=\sum_{k=1}^\infty\lambda_k$, as provided in~\cite[(8)]{gnedin2012two}.
We now combine the triplet $(\mp{+},\mp{-},\Lambda)$ to create a permutation as follows.

Given $\Lambda$, let $\ell_i=i-\Lambda_{i}$ and note that $\ell_1<\ell_2<\ldots$.
Further define $k_0>k_{-1}>k_{-2}>\ldots$ such that $\{\ell_i:i\in\bN\}$ and $\{k_i:i\in\bZ\setminus\bN\}$ form a partition of $\bZ$.
Now define $\imp$ as a function of $(\mp{+},\mp{-},\Lambda)$ by
\begin{align}\label{eq:inf perm}
    \imp(x)=\left\{\begin{array}{ll}
        \mp{+}(i) & \textrm{if $x=\ell_i$} \\
        1-\mp{-}(1-i) & \textrm{if $x=k_i$}
    \end{array}\right.
\end{align}
and note that $\imp$ is indeed a permutation of $\bZ$.
Thanks to~\cite[Section~3]{gnedin2012two}, $\imp$ is distributed as a Mallows permutation on $\bZ$, in particular by uniqueness of the $q$-invariant distribution.

We note here that the \textit{balanced} property (also introduced in~\cite{gnedin2012two}) does not play a role when considering the corresponding binary search trees.
So, although Mallows permutations on $\bZ$ are also almost-surely balanced, this property will not be mentioned in the remainder of this work;
the focus here is placed on admissible permutations, for which we can define the corresponding redwood tree.

\subsection{Redwood trees}\label{sec:redwood}

In this section, we properly define redwood trees to complete their introduction from Section~\ref{sec:main}.
We recall the convention on the alphabet $\{\lv,\rv,\uv\}$ as introduced in~\eqref{eq:alphabet} and that
\begin{align*}
    \RIBST=\bigcup_{k\in\bZ}\rv^k\leftT\,,
\end{align*}
where $\leftT=\{\root\}\cup\lv\BST$.
We also recall that $\sigma:\bZ\rightarrow\bZ$ is said to be admissible if it satisfies~\eqref{eq:admissible}.
We start by providing an important property of admissible functions.
A record index of $\sigma$ is a an index $r$ such that, for all $k<r$, we have $\sigma(r)>\sigma(k)$;
given a record index $r$, $\sigma(r)$ is the corresponding record.

\begin{lemma}\label{lem:admissible}
    Let $\sigma:\bZ\rightarrow\bZ$ be an admissible injective function.
    Then, for any $k\in\bZ$, there exists two record indices $r_-$ and $r_+$ such that $r_-<k<r_+$.
\end{lemma}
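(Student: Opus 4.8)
The plan is to deduce the statement from two consequences of admissibility: that the partial maxima of $\sigma$ taken from the left are always finite, and that $\sigma$ is unbounded above. Writing $\bN=\{1,2,\ldots\}$ and $\bZ\setminus\bN=\{\ldots,-1,0\}$, the admissibility condition \eqref{eq:admissible} says that only finitely many $j\le 0$ satisfy $\sigma(j)>0$ and only finitely many $j>0$ satisfy $\sigma(j)\le 0$. The first half, together with injectivity, gives $\sup_{j\le 0}\sigma(j)<\infty$: the finitely many non-positive indices mapping into $\bN$ contribute a bounded set of values, while every other non-positive index maps to a non-positive integer. The second half gives that $\sigma$ is unbounded above: all but finitely many positive indices map to distinct positive integers, whose image is an unbounded subset of $\bN$. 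I would record these two facts first, since they are the only place admissibility enters.

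For the record index $r_-<k$, I would fix $k\in\bZ$ and consider the maximum of $\sigma$ over $\{j:j<k\}$. As $\{j<k\}$ is the union of $\{j\le 0\}$ and a finite set, the bound above shows that $\{\sigma(j):j<k\}$ is a set of integers bounded above, hence it attains its maximum at some index $r_-<k$. This $r_-$ is then automatically a record index: for any $j<r_-$ we have $j<k$, so $\sigma(j)\le\sigma(r_-)$ by maximality, and injectivity upgrades this to $\sigma(j)<\sigma(r_-)$. Thus $r_-<k$ is a record index, as required.

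For the record index $r_+>k$, I would set $v=\max_{j\le k}\sigma(j)$, which is finite by the same boundedness argument. Since $\sigma$ is unbounded above while $\sup_{j\le k}\sigma(j)=v<\infty$, the set $\{j>k:\sigma(j)>v\}$ is nonempty (indeed infinite), and being bounded below by $k+1$ it has a smallest element $r_+$. For $j\le k$ we have $\sigma(j)\le v<\sigma(r_+)$, and for $k<j<r_+$ the minimality of $r_+$ forces $\sigma(j)\le v<\sigma(r_+)$ as well. Hence $\sigma(r_+)>\sigma(j)$ for all $j<r_+$, so $r_+>k$ is a record index.

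The routine parts are the two record-index verifications, which are immediate once the relevant maxima are known to be finite and attained. The main obstacle is therefore the first step: extracting from \eqref{eq:admissible} that the left partial maxima $\sup_{j<k}\sigma(j)$ are finite, so that a ``running maximum from the left'' makes sense at all on $\bZ$, and that $\sigma$ stays unbounded above. Both follow quickly from injectivity and the two-sided counting in \eqref{eq:admissible}, but they are precisely the features that can fail for a general injective $\sigma:\bZ\to\bZ$, which is what makes admissibility the right hypothesis here.
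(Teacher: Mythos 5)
Your proof is correct and follows essentially the same route as the paper: the paper also takes $r_-$ to be the index attaining $\max\{\sigma(\ell):\ell<k\}$ and $r_+=\min\{\ell>k:\sigma(\ell)>\max_{\ell\le k}\sigma(\ell)\}$, with admissibility guaranteeing that these maxima are finite and that a larger value exists to the right. Your write-up merely makes explicit the two consequences of \eqref{eq:admissible} (finiteness of left partial suprema and unboundedness above) that the paper invokes implicitly.
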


\begin{proof}
    Let $\sigma:\bZ\rightarrow\bZ$ be injective and satisfying~\eqref{eq:admissible}.
    Fix $k\in\bZ$.
    Using that $\sigma$ is admissible, we first have that
    \begin{align*}
        M_k=\max\big\{\sigma(\ell):\ell\leq k\big\}<\infty\,.
    \end{align*}
    But then, again thanks to $\sigma$ being admissible, we know that
    \begin{align*}
        r_+=\min\big\{\ell>k:\sigma(\ell)>M_k\big\}
    \end{align*}
    is well-defined and is a record index on the right of $k$.
    Similarly, the admissibility property of $\sigma$ gives us that
    \begin{align*}
        m_k=\max\big\{\sigma(\ell):\ell<k\big\}<\infty\,,
    \end{align*}
    which then, combined with the fact that $\sigma$ is injective and thus admits an inverse $\sigma^{-1}:\sigma(\bZ)\rightarrow\bZ$, gives us that
    \begin{align*}
        r_-=\sigma^{-1}(m_k)
    \end{align*}
    is well-defined and is a record index on the left of $k$.
\end{proof}

Let $\sigma$ be an admissible permutation of $\bZ$.
Using Lemma~\ref{lem:admissible}, the set of record indices of $\sigma$ is a two-sided infinite sequence.
Thus, we say that the sequence $r=(\ldots,r_{-1},r_0,r_1,\ldots)$ is the \textit{standard record representation} of $\sigma$ if and only if
\begin{itemize}
    \item $\{r_k:k\in\bZ\}$ is the complete set of record indices of $\sigma$;
    \item the sequence is in strictly increasing order; and
    \item it is (uniquely) indexed so that $\sigma(r_0)\leq 0<\sigma(r_1)$.
\end{itemize}
When the index $0$ of the sequence is not completely clear, we will underline it: $(\ldots,r_{-1},\underline{r_0},r_1,\ldots)$.

Given $r=(\ldots,r_{-1},r_0,r_1,\ldots)$ the standard record representation of $\sigma$, we let $\sigma[k]=(\sigma(i):\sigma(r_k)<\sigma(i)<\sigma(r_{k+1}))$ be the sequence of images of the permutation between two consecutive records, in the same order as in $\sigma$;
it is worth noting that this is well-defined since, for two record indices $r<r'$, we also have $\sigma(r)<\sigma(r')$.
Finally, we define the redwood tree of $\sigma$ to be
\begin{align}\label{eq:redwood}
    \ribst{\sigma}:=\bigcup_{k\in\bZ}\{\rv^k\}\cup\rv^k\lv\bst{\sigma[k]}\,.
\end{align}
From this definition, it is clear that $\ribst{\sigma}\subseteq\RIBST$ and we now explore further properties of this object.

\begin{lemma}\label{lem:ribst and bst}
    Let $\sigma$ be a permutation on $\bN$.
    Let $\Tilde{\sigma}$ be the extension of $\sigma$ to $\bZ$ by the identity, that is
    \begin{align*}
        \Tilde{\sigma}(k)=\left\{\begin{array}{ll}
            \sigma(k) & \textrm{if $k\in\bN$}\\
            k & \textrm{otherwise}\,.
        \end{array}\right.
    \end{align*}
    Then, we have
    \begin{align*}
        \ribst{\Tilde{\sigma}}=\bst{\sigma}\cup\bigcup_{k<0}\{\rv^k\}=\bst{\sigma}\cup\bigcup_{k\in\bN}\{\uv^k\}\,,
    \end{align*}
    which implies that $\ribst{\Tilde{\sigma}}\cap\BST=\bst{\sigma}$.
\end{lemma}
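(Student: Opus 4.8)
The plan is to unfold the definition~\eqref{eq:redwood} of $\ribst{\Tilde{\sigma}}$ by first identifying the standard record representation of $\Tilde{\sigma}$ together with its inter-record blocks, and then recognizing the resulting ``right spine with hanging left subtrees'' as the ordinary binary search tree $\bst{\sigma}$. Note first that $\Tilde{\sigma}$ is a permutation of $\bZ$ and is admissible, since both sets in~\eqref{eq:admissible} are empty; thus~\eqref{eq:redwood} applies. To locate the records, observe that for $k\leq 0$ we have $\Tilde{\sigma}(k)=k$ and $\Tilde{\sigma}(j)=j<k$ for every $j<k$, so every non-positive integer is a record index; while for $k\geq 1$ we have $\Tilde{\sigma}(k)=\sigma(k)\geq 1>\Tilde{\sigma}(j)$ for all $j\leq 0$, so $k$ is a record index of $\Tilde{\sigma}$ if and only if it is a record index of $\sigma$ in the one-sided sense (and $1$ always is). Since $\Tilde{\sigma}(0)=0$ while $\Tilde{\sigma}(1)=\sigma(1)\geq 1$, the normalization $\Tilde{\sigma}(r_0)\leq 0<\Tilde{\sigma}(r_1)$ forces $r_0=0$; hence $r_{-j}=-j$ for every $j\geq 0$, and $r_1<r_2<\cdots$ enumerate the one-sided record indices of $\sigma$.

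Next I would compute the blocks $\Tilde{\sigma}[k]$. For $k<0$ one has $\Tilde{\sigma}(r_k)=k$ and $\Tilde{\sigma}(r_{k+1})=k+1$, and no integer lies strictly between them, so $\Tilde{\sigma}[k]$ is empty and the corresponding term reduces to $\{\rv^k\}$. For $k\geq 0$, any index whose image lies strictly between the positive values $\Tilde{\sigma}(r_k)$ and $\Tilde{\sigma}(r_{k+1})$ is itself positive (using $\Tilde{\sigma}(r_0)=0$), so $\Tilde{\sigma}[k]$ is exactly the sequence of $\sigma$-values lying strictly between consecutive records $\sigma(r_k)$ and $\sigma(r_{k+1})$, where for $k=0$ the lower bound is $\Tilde{\sigma}(r_0)=0$. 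Substituting into~\eqref{eq:redwood} and splitting the union at $k=0$ yields
\begin{align*}
\ribst{\Tilde{\sigma}}=\left(\bigcup_{k<0}\{\rv^k\}\right)\cup\left(\bigcup_{k\geq 0}\{\rv^k\}\cup\rv^k\lv\bst{\Tilde{\sigma}[k]}\right),
\end{align*}
and by the alphabet convention~\eqref{eq:alphabet} the first union is precisely $\bigcup_{k\in\bN}\{\uv^k\}$.

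It then remains to establish the spine decomposition $\bst{\sigma}=\bigcup_{k\geq 0}\{\rv^k\}\cup\rv^k\lv\bst{\Tilde{\sigma}[k]}$, namely that the records form the right spine of $\bst{\sigma}$ and that the left subtree hanging off the $(k+1)$-th record node is the binary search tree of the $k$-th inter-record block; I expect this to be the main obstacle. For finite sequences it follows by induction on the length via the recursion~\eqref{eq:bst}: the root is the first record, $x_-$ is exactly the $0$-th block, and $x_+$ has records $\sigma(r_2),\sigma(r_3),\ldots$ with blocks $\Tilde{\sigma}[1],\Tilde{\sigma}[2],\ldots$, so applying the induction hypothesis to the shorter sequence $x_+$ and prepending $\rv$ shifts all indices by one. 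For the infinite permutation $\sigma$, every natural number appears and there are infinitely many records, so each block $\Tilde{\sigma}[k]$ is finite; I would then localize using $\BST=\bigcup_{k\geq 0}\rv^k\leftT$ and verify the identity on each $\rv^k\leftT$ separately, applying the finite case to a prefix $(\sigma(1),\ldots,\sigma(N))$ long enough to contain $\sigma(r_{k+1})$ along with all $\sigma$-values between $\sigma(r_k)$ and $\sigma(r_{k+1})$, after which the part of the tree inside $\rv^k\leftT$ has stabilized. Finally, since $\bst{\sigma}\subseteq\BST$ while each $\uv^k\notin\BST$, intersecting with $\BST$ discards the spine of $\uv$'s and recovers $\bst{\sigma}$, giving the last assertion.
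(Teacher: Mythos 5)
Your proposal is correct and follows essentially the same route as the paper: identify the standard record representation of $\Tilde{\sigma}$ as $(\ldots,-2,-1,\underline{0},r_1,r_2,\ldots)$, observe that the inter-record blocks are empty for $k<0$ and coincide with those of $\sigma$ for $k\geq0$, and match the resulting union against the spine decomposition of $\bst{\sigma}$. The only difference is that you supply a proof (induction via~\eqref{eq:bst} plus localization on each $\rv^k\leftT$) of the spine decomposition that the paper simply asserts, which is a sound and welcome addition but not a different approach.
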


\begin{proof}
    First, note that, if $r_1<r_2<\ldots$ is the ordered sequence of record indices of $\sigma$, then it can be used to construct the binary search tree of $\sigma$ as follows
    \begin{align*}
        \bst{\sigma}=\bigcup_{k\geq0}\{\rv^k\}\cup\rv^k\lv\bst{\sigma[k]}\,,
    \end{align*}
    where $\sigma[k]$ is defined as the sequence of images between the two consecutive records $\sigma(r_k)$ and $\sigma(r_{k+1})$ (using the convention that $r_0=0$ and $\sigma(r_0)=0$).
    Now, it suffices to see that the standard record representation of $\Tilde{\sigma}$ is $(\ldots,-2,-1,\underline{0},r_1,r_2,\ldots)$ to obtain that $\Tilde{\sigma}[k]$ is equal to $\sigma[k]$ when $k\geq0$ and is the empty permutation otherwise.
    The result then follows from the definition of the redwood tree and the previous identity on the binary search tree of $\sigma$.
\end{proof}

The previous lemma provides a first reason as to why the redwood tree can be considered as the extension of the binary search tree to two-sided admissible permutations.
However, another characteristic relation between redwood trees and binary search trees is given in the following proposition.
For $a,b\in\bZ$, we let $\llbracket a,b\rrbracket=\{a,a+1,\ldots,b-1,b\}$ be the set of integers between $a$ and $b$.

\begin{prop}\label{prop:local redwood}
    Let $\sigma$ be an admissible permutation on $\bZ$ and $r=(\ldots,r_{-1},r_0,r_1,\ldots)$ be its standard record representation.
    For any $n\in\bN$, let $\sigma_n=(\sigma(-n),\ldots,\sigma(n-1),\sigma(n))$ be the restriction of $\sigma$ to $\llbracket -n,n\rrbracket$ and let $s_n$ be the number of records of $\sigma_n$ smaller or equal than $0$.
    Then, for any finite $I\subseteq\bZ$, there exists $n_0$ such that, for all $n\geq n_0$ we have
    \begin{align*}
        \ribst{\sigma}\cap\left(\bigcup_{k\in I}\rv^k\leftT\right)=\rv^{-s_n}\bst{\sigma_n}\cap\left(\bigcup_{k\in I}\rv^k\leftT\right)\,.
    \end{align*}
    In words, the redwood tree $\ribst{\sigma}$ and the shifted binary search tree $\rv^{-s_n}\bst{\sigma_n}$ are locally equal for $n$ large enough.
\end{prop}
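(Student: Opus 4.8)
The plan is to show that both the redwood tree $\ribst{\sigma}$ and the shifted tree $\rv^{-s_n}\bst{\sigma_n}$ are governed by the same record structure once $n$ is large enough, and that on any finite collection of branches $\bigcup_{k\in I}\rv^k\leftT$ the two trees are built from the \emph{same} finite data. First I would reduce the finite set $I$ to an interval: since $I$ is finite, choose $K$ with $I\subseteq\llbracket -K,K\rrbracket$, so it suffices to prove the equality on $\bigcup_{|k|\le K}\rv^k\leftT$. The node $\rv^k\leftT$ of the redwood tree consists of the record node $\rv^k$ together with the left subtree $\rv^k\lv\bst{\sigma[k]}$, where $\sigma[k]$ is the list of images strictly between the consecutive records $\sigma(r_k)$ and $\sigma(r_{k+1})$. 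So the statement is really the claim that, for $n$ large, the records $r_{-K},\ldots,r_{K+1}$ of $\sigma$ and the values $\sigma[k]$ for $|k|\le K$ are exactly reproduced (after the index shift by $s_n$) by the finite permutation $\sigma_n$ restricted to $\llbracket -n,n\rrbracket$.

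The key step is therefore a \emph{record-stabilisation} argument. By Lemma~\ref{lem:admissible}, $\sigma$ has a two-sided infinite sequence of record indices, and by admissibility only finitely many indices below any fixed level contribute to the maxima $M_k$ and $m_k$ appearing in that lemma. I would argue that for each fixed $k$ with $-K\le k\le K+1$ the record index $r_k$ of $\sigma$ is eventually a record index of $\sigma_n$: once $n$ is large enough that $\llbracket -n,n\rrbracket$ contains $r_k$ together with \emph{all} indices $\ell<r_k$ at which $\sigma(\ell)$ exceeds any value needed to certify that $r_k$ is a record (finitely many, again by admissibility, since $\max\{\sigma(\ell):\ell\le r_k\}$ is attained), the records of $\sigma$ and the records of $\sigma_n$ coincide in the relevant range. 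Symmetrically, for the list $\sigma[k]$ one must ensure that every index $i$ with $\sigma(r_k)<\sigma(i)<\sigma(r_{k+1})$ lies in $\llbracket -n,n\rrbracket$; there are finitely many such $i$ because $\sigma$ is a permutation (only finitely many integers lie in the value-window $(\sigma(r_k),\sigma(r_{k+1}))$, and each has a unique preimage), so for $n$ beyond the largest such $|i|$ the finite sequence $\sigma_n[k]$ equals $\sigma[k]$ exactly, and hence $\bst{\sigma_n[k]}=\bst{\sigma[k]}$.

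It remains to track the index shift. The records of $\sigma_n$ in increasing order are, for $n$ large, a finite contiguous window of the records of $\sigma$; by definition $s_n$ counts those records of $\sigma_n$ with record value $\le 0$, which for large $n$ is exactly the number of record indices $r_j$ of $\sigma$ with $j\le 0$ that survive in $\sigma_n$, i.e. $s_n$ equals the position of $r_0$ within the record list of $\sigma_n$. Consequently the $m$-th record of $\sigma_n$ (counted from $0$) corresponds to $r_{m-s_n}$ of $\sigma$, so the branch attached at $\rv^{m}$ in $\bst{\sigma_n}$ is precisely $\lv\bst{\sigma[m-s_n]}$; reindexing $k=m-s_n$ shows that $\rv^{-s_n}\bst{\sigma_n}$ attaches $\rv^{k}\lv\bst{\sigma[k]}$ at $\rv^k$, matching $\ribst{\sigma}$ on each $\rv^k\leftT$ with $|k|\le K$. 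Taking $n_0$ to be the maximum of the finitely many thresholds obtained above for $-K\le k\le K+1$ gives the desired $n_0$.

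The main obstacle I anticipate is bookkeeping the shift cleanly: one must be careful that the record \emph{indexing} of $\sigma_n$ (which is unavoidably finite and shifts with $n$) lines up with the canonical two-sided indexing of $\sigma$ chosen so that $\sigma(r_0)\le 0<\sigma(r_1)$, and that the definition of $s_n$ as ``records of $\sigma_n$ with value $\le 0$'' genuinely realises this alignment for all large $n$ simultaneously rather than just for each $k$ separately. The quantifier order, namely producing a single $n_0$ that works for the whole interval $\llbracket -K,K\rrbracket$, is handled by taking a finite maximum, but the subtle point is verifying that enlarging $n$ never destroys a record that was already present (it cannot, since admissibility guarantees the certifying indices are all captured once and stay captured), so the stabilised record pattern is monotone in $n$ and the equality, once achieved, persists.
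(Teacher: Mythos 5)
Your proposal is correct and follows essentially the same route as the paper: reduce $I$ to an interval $\llbracket-K,K\rrbracket$, choose $n_0$ so that every preimage of the value-window $\llbracket\sigma(r_{-K}),\sigma(r_{K+1})\rrbracket$ lies in $\llbracket-n_0,n_0\rrbracket$ (which simultaneously stabilises the records and the inter-record blocks $\sigma[k]$), and then match the record decomposition of $\bst{\sigma_n}$ against $\ribst{\sigma}$ via the sign-change anchoring of $s_n$. One small imprecision: $s_n$ is \emph{not} the number of records of $\sigma$ with $j\le0$ that survive in $\sigma_n$ (spurious records of $\sigma_n$ near $-n$, starting with the index $-n$ itself, also have value $\le0$ and are counted), but this does not matter because the alignment is anchored at the sign change $\sigma_n(r_0(n))\le0<\sigma_n(r_1(n))$ rather than at a record count, exactly as your subsequent sentence uses.
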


\begin{proof}
    Without loss of generality, we assume that $I=\llbracket -K,K\rrbracket$ for some $K\geq1$.
    For any $n\in\bN$, let $r(n)=(r_{1-s_n}(n),\ldots,r_{-1}(n),r_0(n),\ldots,r_{m_n}(n))$ be the ordered set of record indices of $\sigma_n$ with the convention that $\sigma_n(r_0(n))\leq 0<\sigma_n(r_1(n))$ (so that $s_n$ is indeed the number of records of $\sigma_n$ smaller or equal than $0$).
    Finally, since $\sigma$ is a bijection of $\bZ$, define
    \begin{align*}
        n_0=\max\bigg\{|k|:k\in\sigma^{-1}\Big(\llbracket\sigma(r_{-K}),\sigma(r_{K+1})\rrbracket\Big)\bigg\}
    \end{align*}
    so that
    \begin{align*}
        \llbracket\sigma(r_{-K}),\sigma(r_{K+1})\rrbracket\subseteq\sigma\big(\llbracket-n_0,n_0\rrbracket\big)\,.
    \end{align*}
    In particular, this implies that $n_0\geq\max\{r_{K+1},-r_{-K}\}$.
    Note that $\sigma(\llbracket-n_0,n_0\rrbracket)$, or in general $\sigma(\llbracket-n,n\rrbracket)$, is not necessarily a segment of $\bZ$ itself;
    thus the previous inclusion simply means that it contains the segment defined by $\sigma(r_{-K})$ and $\sigma(r_{K+1})$ but may very well contain other isolated segments or singletons.
    We now prove that $n_0$ satisfies the desired condition.

    \medskip

    Let $n\geq n_0$.
    First, for any $-K\leq k\leq K+1$, thanks to the fact that $r_k$ is a record index of $\sigma$, it is also a record index of $\sigma_n$.
    Then, by the definition of both $r$ and $r(n)$, this implies that $r_k=r_k(n)$.
    Now, for any $-K\leq k\leq K$, using that $\sigma(r_k)<\sigma(r_{k+1})$, we have that
    \begin{align*}
        \llbracket\sigma(r_k),\sigma(r_{k+1})\rrbracket\subseteq\llbracket\sigma(r_{-K}),\sigma(r_{K+1})\rrbracket\subseteq\sigma\big(\llbracket-n_0,n_0\rrbracket\big)\subseteq\sigma\big(\llbracket-n,n\rrbracket\big)\,.
    \end{align*}
    Since $\sigma_n$ is the restriction of $\sigma$ to $\llbracket-n,n\rrbracket$, this also implies that
    \begin{align*}
        \llbracket\sigma_n(r_k(n)),\sigma_n(r_{k+1}(n))\rrbracket=\llbracket\sigma_n(r_k),\sigma_n(r_{k+1})\rrbracket\subseteq\sigma_n\big(\llbracket-n,n\rrbracket\big)\,.
    \end{align*}
    Finally, recall the definition of $\sigma[k]$ as the sequence of images of $\sigma$ between $\sigma(r_k)$ and $\sigma(r_{k+1})$ and similarly define $\sigma_n[k]$ to obtain that $\sigma[k]=\sigma_n[k]$ for any $-K\leq k\leq K$.

    Decompose $\bst{\sigma_n}$ using its records in the same way as in the proof of Lemma~\ref{lem:ribst and bst} to obtain
    \begin{align*}
        \bst{\sigma_n}&=\bigcup_{-s_n\leq k\leq m_n+1}\{\rv^{k+s_n}\}\cup\rv^{k+s_n}\lv\bst{\sigma_n[k]}\\
        &=\rv^{s_n}\bigcup_{-s_n\leq k\leq m_n+1}\{\rv^k\}\cup\rv^k\lv\bst{\sigma_n[k]}\,.
    \end{align*}
    Using the previously shown result stating that $\sigma_n[k]=\sigma[k]$ for $-K\leq k\leq K$, we have that
    \begin{align*}
        \rv^{-s_n}\bst{\sigma_n}\cap\left(\bigcup_{k\in I}\rv^k\leftT\right)&=\bigcup_{|k|\leq K}\{\rv^k\}\cup\rv^k\lv\bst{\sigma[k]}\\
        &=\ribst{\sigma}\cap\left(\bigcup_{k\in I}\rv^k\leftT\right)\,,
    \end{align*}
    where the last inequality directly follows from the definition of $\ribst{\sigma}$.
    This concludes the proof of the proposition by recalling that $n\geq n_0$ was arbitrarily chosen.
\end{proof}


\subsubsection{Disclaimer on the name and notations for redwood trees}\label{sec:disclaimer}

One might wonder why choosing to name the extension of binary search trees as \textit{redwood trees}.
A first version of the article actually referred to them as \textit{right infinite binary search trees}, since they are two-sided infinite extensions of binary search tree growing in a right-downward direction\footnote{This is naturally equivalent to growing in a left-upward direction in the case of a two-sided infinite tree, but since the idea was to extend right-downward paths in classical binary search trees, we used the downward direction as the standard one.}.
This name also implied the existence of \textit{left} infinite binary search tree, which are briefly discussed in Section~\ref{sec:ledwood}, and relate here to the limit of finite Mallows trees when $q>1$.

The name right infinite binary search tree, although easily abbreviated to RIBST, was a bit too long (the name \textit{binary search tree} is already rather long on its own) and felt like a precise description of an object rather than its actual name.
For this reason, and since these trees are both infinitely high and infinitely deep, we decided to name them based on the highest species of trees in the world: the \textit{sequoia sempervirens}, also called \textit{California redwoods}\footnote{https://en.wikipedia.org/wiki/Sequoia\_sempervirens}.

Following this logic, redwood trees could easily refer to both right and left infinite binary search trees.
However, the fact that both \textit{right infinite binary search trees} and \textit{redwood trees} start with an `r' convinced us of this choice;
thus, we invite anyone who would rather use left infinite binary search trees than right ones to refer to them as \textit{ledwood trees}.

Finally, and as far as it made sense in the context of this article, we always tried to use the letter $T$ when considering one-sided trees, that is trees with a natural node at the top, and the letter $Y$ for two-sided trees.
The reason for choosing $Y$ is that it can be interpreted as the letter $T$ where we extended its two top branches upwards;
with that logic, $Y$ can be used for both redwood and ledwood trees.

\subsection{Local convergence of graphs}\label{sec:local}

In this section, we provide some background on the topology of the local convergence of graphs.
Most of what is presented here can be found in more details in~\cite[Section~2]{van2023random}.

A graph $G=(V,E)$ is defined as a pair of vertices $V$ and edges $E$.
We write $|G|:=|V|$ for the size of $G$ and use the notation $v\in G$ to say that $v$ is a vertex from $V$.
Given a graph $G$ and one of its vertex $v\in G$, write $B_r(v,G)$ for the rooted ball of radius $r$ around $v$ in $G$;
for example $B_0(v,G)=(v,(\{v\},\emptyset))$.
Given two rooted graphs $(o,G)$ and $(o',G')$, where $G=(V,E)$ and $G'=(V',E')$, say that $(o,G)$ and $(o',G')$ are \textit{isomorphic} and write it $(o,G)\equiv(o',G')$ if there exists a bijection $\varphi:V\rightarrow V'$ such that $\varphi(o)=o'$ and
\begin{align*}
    (u,v)\in E~\Longleftrightarrow~(\varphi(u),\varphi(v))\in E'\,.
\end{align*}
We now use these notations to define almost-sure local convergence.

\medskip

Let $(G_n)_{n\geq1}$ be a sequence of finite random graphs and let $(o,G)$ be a (usually infinite) random rooted graph.
We say that $(G_n)_{n\geq1}$ converges locally almost-surely to $(o,G)$ and we write it $G_n\localconv(o,G)$ if, for any $r\geq1$ and any rooted graph $H$, we have
\begin{align}\label{eq:las}
    \frac{1}{|G_n|}\sum_{v\in G_n}\mathbbm{1}_{\{B_r(v,G_n)\equiv H\}}\overset{\mathrm{a.s.}}{\longrightarrow}\prob\big(B_r(o,G)\equiv H\big)\,,
\end{align}
where the convergence occurs according to the standard almost-sure convergence of random variables.
We note that the general definition of the local almost-sure convergence allows for the limit to be a random variable (ie a random measure on the space of rooted graphs) and we decided not to explicit this dependency for clarity and since all almost-sure local limits will be deterministic here;
we refer to~\cite[Section~2]{van2023random} for a more detailed discussion on the properties of local limits of random graphs.

\subsubsection{Other types of local convergence}\label{sec:other local}

For the sake of simplicity and since it suits our purpose, we directly stated the right type of convergence for the main result without explaining how this definition came to existence.
However, it is worth mentioning that there exists weaker and more common types of local convergence for graphs.
For example, one could replace the convergence in~\eqref{eq:las} by any other type of random variable convergence, for example in probability.
In particular, the most common type of local convergence is the \textit{local weak convergence}, introduced in~\cite{aldous2004objective,benjamini2001recurrence}, obtained by simply taking the expectation on both sides of~\eqref{eq:las}:
\begin{align*}
    \frac{1}{|G_n|}\sum_{v\in G_n}\prob\big(B_r(v,G_n)\equiv H\big)\longrightarrow\prob\big(B_r(o,G)\equiv H\big)\,;
\end{align*}
note that the convergence here is simply that of real numbers.

A natural way to further generalize the notion of local convergence is to restrict the distribution of $v$.
Indeed, considering the local weak convergence, the previous condition can be re-written as
\begin{align*}
    \prob\big(B_r(v,G_n)\equiv H\big)\longrightarrow\prob\big(B_r(o,G)\equiv H\big)\,,
\end{align*}
where $v$ is a uniformly sampled vertex of $G_n$.
One could however allow any type of distribution for $v$.
In particular, in the case of binary search tree, there exists a node with a special role: the root $\root$.
Therefore, we define the \textit{rooted local convergence} for binary search trees as follows.

Let $(T_n)_{n\geq1}$ be a sequence of finite binary search trees and let $T$ be a binary search tree.
We say that $T$ is the rooted (local) limit of $(T_n)_{n\geq1}$ and we write it $T_n\rootconv T$ if, for any $r\geq1$ and any rooted $t$, we have
\begin{align*}
    \prob\big(B_r(\root,T_n)\equiv t\big)\longrightarrow\prob\big(B_r(\root,T)\equiv t\big)\,.
\end{align*}
It is worth mentioning that this could also be called \textit{rooted local weak convergence} as it based on the convergence condition of the local weak convergence.
However, this would imply that both \textit{rooted local convergence in probability} or \textit{rooted local almost-sure convergence} would also make sense;
and while these two concepts could be defined using the same logic as before, these topologies actually simply state that $B_r(\root,T_n)=B_r(\root,T)$ for $n$ large enough, which slightly defeats the purpose of local limits (all other types of local convergence do not require any specific coupling of the random variables involved in the definitions).

\section{The almost-sure local convergence}\label{sec:as conv}

Since it is easier to prove the convergence result from Theorem~\ref{thm:conv} on the tree characterized in Theorem~\ref{thm:imt} than directly on the redwood tree drawn from the two-sided Mallows permutation, we start by proving the result of Theorem~\ref{thm:imt}.

\subsection{Records of the two-sided infinite Mallows tree}\label{sec:rec imt}

This section strongly relies on the notations introduced in Section~\ref{sec:infinite mallows} and, in particular, uses the construction of the two-sided infinite Mallows permutation $\imp$ from the triplet $(\mp{+},\mp{-},\Lambda)$.
We recall that $\{\ell_1<\ell_2<\ldots\}$ and $\{\ldots<k_{-2}<k_{-1}<k_0\}$ form a partition of $\bZ$, are defined by $\ell_i=1-\Lambda_i$, and are at the essence of the definition of $\imp$ as given in~\eqref{eq:inf perm}.

We start by providing an exact characterization of the standard record representation of $\imp$ given $\mp{+}$, $\mp{-}$, and $\Lambda$.
Write $(r_1,r_2,\ldots)$ for the the ordered sequence of record indices of $\mp{+}$ and define $(\rho_1,\rho_2,\ldots)$ inductively as follows.
First, $\rho_1=\mp{-}^{-1}(1)$ and then, given $\rho_i$, let
\begin{align*}
    \rho_{i+1}=\mp{-}^{-1}\Big(\min\big\{\mp{-}(k):k>\rho_i\big\}\Big)\,.
\end{align*}

\begin{prop}\label{prop:records imp}
    Using the previous notations, let $m\in\bN$ be minimal so that $k_{1-\rho_m}<\ell_1$.
    Then, the standard record representation of $\imp$ is given by
    \begin{align*}
        (\ldots,k_{1-\rho_{m+2}},k_{1-\rho_{m+1}},\underline{k_{1-\rho_m}},\ell_{r_1},\ell_{r_2},\ldots)\,.
    \end{align*}
\end{prop}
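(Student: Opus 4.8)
The plan is to determine the record indices of $\imp$ by splitting them according to the sign of the corresponding record value, exploiting the structure of the construction~\eqref{eq:inf perm}: $\imp$ maps the positions $\{\ell_i:i\in\bN\}$ bijectively onto $\bN$ (through $\mp{+}$) and the positions $\{k_i:i\in\bZ\setminus\bN\}$ bijectively onto $\bZ\setminus\bN$ (through $x\mapsto 1-\mp{-}(1-x)$). Consequently every record index of $\imp$ with positive record value must be some $\ell_i$, and every record index with non-positive record value must be some $k_i$; I would treat these two families separately and then check that, listed in increasing order of index, they reproduce exactly the claimed sequence, with the index $0$ landing on $k_{1-\rho_m}$.

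First I would handle the positive records. Fix $\ell_i$ and observe that any position $y<\ell_i$ is either an earlier $\ell$-position $\ell_{i'}$ with $i'<i$, or a $k$-position, in which case $\imp(y)\leq 0<\imp(\ell_i)$ automatically. Hence $\ell_i$ is a record index of $\imp$ if and only if $\mp{+}(i)>\mp{+}(i')$ for all $i'<i$, that is, if and only if $i$ is a record index of $\mp{+}$. This shows the positive records of $\imp$ are exactly $\ell_{r_1}<\ell_{r_2}<\cdots$, with values $\mp{+}(r_1)<\mp{+}(r_2)<\cdots$, the leftmost being $\ell_{r_1}=\ell_1$ (since $r_1=1$), carrying a positive value.

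Next I would handle the non-positive records. A $k$-position $k_i$ lying to the right of some $\ell$-position cannot be a record, since that $\ell$-position carries a positive value while $\imp(k_i)\leq 0$; as the smallest $\ell$-position is $\ell_1$, this rules out every $k_i$ with $k_i>\ell_1$. For $k_i<\ell_1$, all positions to its left are $k$-positions $k_j$ with $j<i$, so $k_i$ is a record if and only if $\imp(k_i)>\imp(k_j)$ for all $j<i$; writing $a=1-i$, $b=1-j$ and using $\imp(k_i)=1-\mp{-}(a)$, this is equivalent to $\mp{-}(a)<\mp{-}(b)$ for all $b>a$, i.e. to $a$ being a position at which $\mp{-}$ attains the minimum of its tail $\{a,a+1,\ldots\}$. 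I then claim the set of such positions $a$ equals $\{\rho_1,\rho_2,\ldots\}$: a short induction shows each $\rho_i$ has this tail-minimum property (directly from the defining argmin over $\{k>\rho_{i-1}\}$), and conversely, if $a\notin\{\rho_i\}$, the smallest $\rho_j$ exceeding $a$ satisfies $\mp{-}(\rho_j)<\mp{-}(a)$ (since $\mp{-}(\rho_1)=1$ when $a<\rho_1$, and $\mp{-}(\rho_j)=\min\{\mp{-}(k):k>\rho_{j-1}\}$ with $a>\rho_{j-1}$, $a\neq\rho_j$, otherwise), contradicting the tail-minimum property at $a$.

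Finally I would assemble the two families. The candidate non-positive records are thus the $k_{1-\rho_j}$, subject to $k_{1-\rho_j}<\ell_1$. Since $\rho_1<\rho_2<\cdots$ and $k$ is increasing in its index, the map $j\mapsto k_{1-\rho_j}$ is strictly decreasing with $k_{1-\rho_j}\to-\infty$, so $m$ (the minimal index with $k_{1-\rho_m}<\ell_1$) is well-defined and the constraint $k_{1-\rho_j}<\ell_1$ holds exactly for $j\geq m$. Hence the non-positive records are $\cdots<k_{1-\rho_{m+1}}<k_{1-\rho_m}$, all strictly below $\ell_1\leq\ell_{r_1}$, so concatenating with the positive records in increasing order of index yields precisely $(\ldots,k_{1-\rho_{m+1}},k_{1-\rho_m},\ell_{r_1},\ell_{r_2},\ldots)$. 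It remains to check the indexing convention: $\imp(k_{1-\rho_m})=1-\mp{-}(\rho_m)\leq 0$ while $\imp(\ell_{r_1})=\mp{+}(1)>0$, so the sign boundary of the record values falls exactly between $k_{1-\rho_m}$ and $\ell_{r_1}$, placing the underlined index $0$ on $k_{1-\rho_m}$ as claimed. I expect the main obstacle to be the clean bookkeeping of the interleaving of $\ell$- and $k$-positions near the origin, together with the precise identification of the tail-minimum positions of $\mp{-}$ with the sequence $(\rho_i)$; once the latter is in hand, the role of $m$ and the placement of the index $0$ follow immediately from monotonicity and the signs of the record values.
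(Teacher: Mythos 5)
Your proposal is correct and follows essentially the same route as the paper's proof: both identify the positive-valued records of $\imp$ with the records of $\mp{+}$ at the $\ell$-positions, identify the non-positive-valued records with the anti-records (your ``tail-minimum'' positions) of $\mp{-}$ at the $k$-positions left of $\ell_1$, prove that $(\rho_i)_{i\geq1}$ enumerates exactly these anti-records, and place the index $0$ at the sign boundary between $k_{1-\rho_m}$ and $\ell_{r_1}$. The only difference is organizational (you split by the sign of the record value up front, while the paper first establishes the $\ell_{r_j}$ as consecutive records and then treats the $\rho_i$), which does not change the substance of the argument.
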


\begin{proof}
    We first prove that $\ell_{r_1},\ell_{r_2},\ldots$ is a sequence of consecutive records for $\imp$.
    Let $j\in\bN$ and consider $k<\ell_{r_i}$.
    Then, either we have $k=\ell_i$ with $i<r_1$ and then
    \begin{align*}
        \imp(k)=\mp{+}(i)<\mp{+}(r_j)=\imp(\ell_{r_j})\,,
    \end{align*}
    where the inequality uses that $r_j$ is a record index of $\mp{+}$;
    or we have $k=k_i$ for some $i\leq0$ and then
    \begin{align*}
        \imp(k)=1-\mp{-}(1-i)\leq0<\mp{+}(r_j)=\imp(\ell_{r_j})\,.
    \end{align*}
    In both case, $\imp(k)<\imp(\ell_{r_j})$ so $\ell_{r_j}$ is a record index of $\imp$.
    On the other hand, and using the same type of argument, if $\imp$ has a record index between $\ell_{r_i}$ and $\ell_{r_{i+1}}$, then there has to exist a record between $r_i$ and $r_{i+1}$ in $\mp{+}$, which contradicts the definition of $(r_1,r_2,\ldots)$.
    Before moving to the second part of the proof, we note that we always have $r_1=1$ and that $\imp(\ell_{r_1})>0$.

    \medskip

    We now prove that $\rho_1,\rho_2,\ldots$ are consecutive anti-record indices of $\mp{-}$, that is indices $\rho$ such that, for all $k>\rho$, $\mp{-}(\rho)<\mp{-}(k)$.
    The proof is divided into three steps: proving that this sequence is increasing, then proving that they are anti-record indices, and finally proving that there are not anti-record index between them.

    By their definition, we note that
    \begin{align*}
        \mp{-}(\rho_{i+1})=\min\big\{\mp{-}(k):k>\rho_i\big\}\,,
    \end{align*}
    and since $\mp{-}$ is bijective, this implies that $\rho_{i+1}>\rho_i$.
    Now, for any $i\in\bN$ and any $k>\rho_i$, we also have that $k\geq\rho_{i-1}$ and so, by definition
    \begin{align*}
        \mp{-}(\rho_i)\leq\mp{-}(k)\,.
    \end{align*}
    Using that $\mp{-}$ is a bijection along with the fact that $k\neq\rho_i$ leads to the desired strict inequality;
    this argument also works when $i=1$ since, by definition, for any $k\neq\mp{-}^{-1}(1)=\rho_1$, we have $\mp{-}(k)>1=\mp{-}(\rho_1)$.
    Finally, assume that there exists $\rho_i<k<\rho_{i+1}$ such that $k$ is an anti-record index of $\mp{-}$.
    Then, by definition of $k$ being an anti-record, we have that
    \begin{align*}
        \mp{-}(k)<\mp{-}(\rho_{i+1})\,.
    \end{align*}
    However, since $k>\rho_i$ and by definition of $\rho_{i+1}$, we also have that
    \begin{align*}
        \mp{-}(k)\geq\mp{-}(\rho_{i+1})\,,
    \end{align*}
    which is a contradiction.
    This proves that $\rho_1,\rho_2,\ldots$ is the ordered sequence of anti-record indices of $\mp{-}$

    \medskip

    We conclude the proof by proving that the desired sequence is the standard record representation of $\imp$.
    First of all, since $\imp(\ell_{r_1})>0$ and any other positive record of $\imp$ would also be a record of $\mp{+}$, the record prior to $\ell_{r_1}$ necessarily corresponds to the index $0$ in the standard record representation of $\imp$.
    Now, let $r$ be a record index of $\imp$ such that $\imp(r)\leq0$.
    Then $r=k_{1-i}$ for some $i\in\bN$ and so, for any $k>i$, by the definition of $r$ and since $k_{1-k}<k_{1-i}=r$, we have
    \begin{align*}
        \mp{-}(k)=1-\imp(k_{1-k})>1-\imp(k_{1-i})=\mp{-}(i)\,.
    \end{align*}
    This means that $i$ is an anti-record index of $\mp{-}$ and so any negative record of $\imp$ can be mapped to an anti-record of $\mp{-}$.
    For the same reason, the sequence of consecutive negative record of $\imp$ corresponds to the sequence of consecutive anti-records of $\mp{-}$.
    To conclude the proof, we just need to show that $k_{1-\rho_i}$ is not a record of $\imp$ for $i<m$.
    This holds since, by definition of $m$, for any $i<m$, we have $k_{1-\rho_i}>\ell_1=\ell_{r_1}$ (they cannot be equal) but also $\imp(k_{1-\rho_i})\leq0<\imp(\ell_{r_1})$.
\end{proof}

Proposition~\ref{prop:records imp} provides a first understanding of the records of $\imp$ given $(\mp{+},\mp{-},\Lambda)$.
However, the definition of $k_{1-\rho_m}$ might not be fully clear and we provide here a simpler characterization.

\begin{cor}\label{cor:record 0}
    Using the notations of Proposition~\ref{prop:records imp}, we have that
    \begin{align*}
        \imp(k_{1-\rho_m})=1-\min\Big\{\mp{-}(k):k>\Lambda_1\Big\}\,.
    \end{align*}
\end{cor}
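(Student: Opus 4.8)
The plan is to reduce the claimed identity to a statement about the anti-record structure of $\mp{-}$ and then to pin down $\rho_m$ explicitly. Since $k_{1-\rho_m}$ is a negative record index, the definition~\eqref{eq:inf perm} of $\imp$ applies with $x = k_{1-\rho_m}$ (i.e.\ $i = 1-\rho_m$) and gives immediately that $\imp(k_{1-\rho_m}) = 1 - \mp{-}\big(1-(1-\rho_m)\big) = 1 - \mp{-}(\rho_m)$. Hence it suffices to prove that $\mp{-}(\rho_m) = \min\{\mp{-}(k) : k > \Lambda_1\}$, i.e.\ that $\rho_m$ is the location of the minimum of $\mp{-}$ over the indices strictly exceeding $\Lambda_1$.

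The first step is to translate the defining condition of $m$ — minimality of $i$ with $k_{1-\rho_m} < \ell_1$ — into the cleaner condition $\rho_m > \Lambda_1$. Recall that the $k$-sequence is indexed by decreasing values $\ldots < k_{-1} < k_0$, so $k_{1-\rho_i} = k_{-(\rho_i-1)}$ is the $\rho_i$-th largest element of the $k$-sequence. Thus $k_{1-\rho_i} < \ell_1$ holds precisely when at most $\rho_i - 1$ of the $k$'s lie at or above $\ell_1$, that is, when $\rho_i > g$, where $g := |\{j \in \bZ\setminus\bN : k_j \geq \ell_1\}|$. I would compute $g$ by a direct count: for $N$ large, the interval $[\ell_1, N]$ contains $N - \ell_1 + 1$ integers, of which exactly $N$ are $\ell$'s (namely $\ell_1, \ldots, \ell_N$, using that $\ell_i = i$ for $i$ past the length of the partition $\Lambda$), leaving $1 - \ell_1 = \Lambda_1$ of them equal to $k$'s. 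Since every $k \geq \ell_1$ is bounded above by $k_0 < \infty$, this yields $g = \Lambda_1$, and therefore $k_{1-\rho_i} < \ell_1 \iff \rho_i > \Lambda_1$, so that $m$ is the smallest index with $\rho_m > \Lambda_1$.

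The second step finishes via the anti-record characterization. By Proposition~\ref{prop:records imp} the sequence $\rho_1 < \rho_2 < \cdots$ consists exactly of the anti-record indices of $\mp{-}$ in increasing order, so $\rho_{m-1} \leq \Lambda_1 < \rho_m$ and $\mp{-}(\rho_m) = \min\{\mp{-}(k) : k \geq \rho_m\}$. To show the minimum over $\{k > \Lambda_1\}$ is also attained at $\rho_m$, take any $k > \Lambda_1$: if $k \geq \rho_m$, the anti-record property already gives $\mp{-}(k) \geq \mp{-}(\rho_m)$; if $\Lambda_1 < k < \rho_m$, then $k > \rho_{m-1}$, so the defining relation $\mp{-}(\rho_m) = \min\{\mp{-}(j) : j > \rho_{m-1}\}$ forces $\mp{-}(k) \geq \mp{-}(\rho_m)$. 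Injectivity of $\mp{-}$ makes these inequalities strict away from $k = \rho_m$, so the minimum over $k > \Lambda_1$ is attained exactly at $\rho_m$. The boundary case $m = 1$ is handled identically, using that $\rho_1 = \mp{-}^{-1}(1)$ is the global minimum of $\mp{-}$.

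The main obstacle I expect is the combinatorial bookkeeping in the first step: correctly relating the decreasing $\bZ$-indexing of the $k$-sequence to the inequality $k_{1-\rho_m} < \ell_1$, and in particular establishing cleanly that exactly $\Lambda_1$ of the $k$'s lie at or above $\ell_1$. Once $m$ is identified as the first index with $\rho_m > \Lambda_1$, the anti-record argument is routine.
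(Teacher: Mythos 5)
Your proof is correct and follows essentially the same route as the paper's: reduce the identity to $\mp{-}(\rho_m)=\min\{\mp{-}(k):k>\Lambda_1\}$ via the definition~\eqref{eq:inf perm}, establish $\rho_{m-1}\leq\Lambda_1<\rho_m$ by counting that exactly $\Lambda_1$ of the $k_j$'s exceed $\ell_1$, and conclude from the defining relation $\mp{-}(\rho_m)=\min\{\mp{-}(k):k>\rho_{m-1}\}$. Your treatment is if anything slightly more careful (the explicit count of $g$, the two-case minimization argument, and the boundary case $m=1$), but the substance is identical.
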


\begin{proof}
    First recall that $m$ is defined so that $k_{1-\rho_m}<\ell_1<k_{1-\rho_{m-1}}$.
    Using that the sequence $\Lambda_1\geq\Lambda_2\geq\ldots$ is eventually equal to $0$, we observe that
    \begin{align*}
        \big|\big\{j\in\bZ\setminus\bN:k_j>\ell_1\big\}\big|=\Lambda_1\,.
    \end{align*}
    In particular, this implies that $k_{1-\Lambda_1-1}<\ell_1<k_{1-\Lambda_1}$.
    Since $m$ is defined so that $k_{1-\rho_m}<\ell_1<k_{1-\rho_{m-1}}$, it follows that $\rho_m>\Lambda_1\geq\rho_{m-1}$.

    Recall now the definition of $\rho_m$:
    \begin{align*}
        \rho_m=\mp{-}^{-1}\Big(\min\big\{\mp{-}(k):k>\rho_{m-1}\big\}\Big)\,,
    \end{align*}
    which equivalently means that
    \begin{align*}
        \mp{-}(\rho_m)=\min\big\{\mp{-}(k):k>\rho_{m-1}\big\}\,.
    \end{align*}
    Knowing that $\rho_m>\Lambda_1$, the minimum is not reached for $\rho_{m-1}<k\leq\Lambda_1$, so that
    \begin{align*}
        \mp{-}(\rho_m)=\min\big\{\mp{-}(k):k>\Lambda_1\big\}\,.
    \end{align*}
    To conclude the proof, recall that
    \begin{align*}
        \imp(k_{1-\rho_m})=1-\mp{-}(\rho_m)\,,
    \end{align*}
    giving us the desired result.
\end{proof}

Note that the statements of Proposition~\ref{prop:records imp} and Corollary~\ref{cor:record 0}, although applied to $\imp$, are actually deterministic results: they only depend on the construction of $\imp$ from $(\mp{+},\mp{-},\Lambda)$.
We now want to combine these results to characterize the distribution of the distance between consecutive records.
We first provide a characterization of the positive records of the two-sided infinite Mallows permutation.

\begin{cor}\label{cor:dist recs}
    Fix $q\in[0,1)$.
    Let $\imp$ be a two-sided infinite Mallows permutation and let $(\ldots,r_{-1},r_0,r_1,\ldots)$ be its standard record representation.
    Then the distance between consecutive positive records $(\imp(r_{i+1})-\max\{\imp(r_i),0\})_{i\geq0}$ is a sequence of independent $\pgeom{1-q}$ random variables.
\end{cor}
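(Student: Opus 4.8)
The plan is to transfer the statement about the positive records of the two-sided permutation $\imp$ to the records of the one-sided permutation $\mp{+}$, and then read off the answer from the infinite Bernoulli model of Section~\ref{sec:finite mallows}. By Proposition~\ref{prop:records imp}, the positive records of $\imp$ are exactly the indices $\ell_{r^+_1} < \ell_{r^+_2} < \cdots$, where $r^+_1 < r^+_2 < \cdots$ are the record indices of $\mp{+}$, and by~\eqref{eq:inf perm} their values are $\imp(\ell_{r^+_j}) = \mp{+}(r^+_j)$; that is, the $j$-th positive record value of $\imp$ is the $j$-th record value of $\mp{+}$. The same proposition gives $\imp(r_0) = \imp(k_{1-\rho_m}) \leq 0$, so the $i = 0$ entry of $(\imp(r_{i+1}) - \max\{\imp(r_i), 0\})_{i \geq 0}$ equals $\imp(r_1) = \mp{+}(r^+_1) = \mp{+}(1)$, the first record value of $\mp{+}$, while for $i \geq 1$ the entry equals $\mp{+}(r^+_{i+1}) - \mp{+}(r^+_i)$, the gap between consecutive record values of $\mp{+}$. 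Hence it suffices to show that the first record value of $\mp{+}$, followed by all of its record-value gaps, is an independent sequence of $\pgeom{1-q}$ variables.

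For the first term I would use the Bernoulli model directly: $\mp{+}(1) = I_1 = \min\{j : B_{1,j} = 1\}$ is the location of the first success in independent $\bern{1-q}$ trials, hence $\pgeom{1-q}$. For the gaps I would establish a regeneration property at the first record: writing $v_1 < v_2 < \cdots$ for the record values of $\mp{+}$ (the running maxima of $(I_1, I_2, \ldots)$), I claim that conditionally on $v_1 = c$ the part of $\mp{+}$ lying above level $c$ --- the entries of value $> c$, with values relabelled by $v \mapsto v - c$ and positions reindexed by rank --- is again an independent Mallows permutation on $\bN$ with parameter $q$. Its record values are then $v_2 - c < v_3 - c < \cdots$, so the first of them, $v_2 - v_1$, is a fresh $\pgeom{1-q}$ independent of $v_1$; iterating this regeneration shows that $(v_1, v_2 - v_1, v_3 - v_2, \ldots)$ are i.i.d.\ $\pgeom{1-q}$, which is precisely the reduced claim. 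Alternatively, this one-sided fact may simply be quoted from~\cite[Lemma~1.10]{addario2021height}.

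The main obstacle is the regeneration step. The delicate point is that restricting $\mp{+}$ to its values above level $c$ deletes every row of the Bernoulli matrix whose first available success sits in a column $\leq c$, and which rows these are depends on the random set of not-yet-used small columns; one must verify that, after relabelling, the surviving configuration is genuinely independent of the structure below $c$ and still carries the law of $\mp{+}$. I expect this to follow from the $q$-invariance characterisation of the Mallows measure together with the regenerative structure of~\cite{pitman2019regenerative}, but it is the single place where genuine care is needed rather than routine bookkeeping.
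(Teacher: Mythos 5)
Your proposal is correct and follows essentially the same route as the paper: reduce via Proposition~\ref{prop:records imp} to the record values of the one-sided permutation $\mp{+}$ (noting that $\max\{\imp(r_0),0\}=0$ so the first gap is just the first record value), and then invoke the known fact from~\cite{addario2021height} that the record-value gaps of a one-sided Mallows permutation are independent $\pgeom{1-q}$ variables --- the paper cites Proposition~1.11 there, which is exactly the regeneration/Markov-chain statement you sketch. Your hands-on regeneration argument does contain the genuinely delicate conditioning issue you flag (the event $\{v_1=c\}$ is not measurable with respect to the restriction of $\mp{+}$ to values $\le c$ alone), but since your fallback of quoting the cited result is valid and is precisely what the paper does, the proof goes through.
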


\begin{proof}
    Thanks to Proposition~\ref{prop:records imp}, we know that the positive records of $\imp$ are equal to the records of $\mp{+}$.
    But now~\cite[Proposition~1.11]{addario2021height} states that the distances between records is distributed as a $\pgeom{1-q}$ and that the consecutive values and number of records is a Markov chain, thus proving the independence of the sequence.
\end{proof}

While Corollary~\ref{cor:dist recs} characterizes the distribution of the jumps between positive records, proving a similar result for the negative proves to be much harder.
Indeed, negative records are related to anti-records of $\mp{-}$, and the sequence of anti-records of a Mallows permutation does not have independent geometric jumps.
However, their jump distribution is also biased by the shift created by $\Lambda$ and will thus end up being geometric, but it requires a different approach, which we now describe.

\medskip

In order to fully characterize the distribution of the distance between consecutive jumps in a two-sided infinite Mallows permutation, we shift the permutation so that negative records become related to records of $\mp{+}$.
More precisely, we strongly rely on the following proposition.

\begin{prop}\label{prop:shift rec}
    Fix $q\in[0,1)$.
    Let $\imp$ be the infinite Mallows permutation constructed from the triplet $(\mp{+},\mp{-},\Lambda)$, whose standard record representation is given by
    \begin{align*}
        (\ldots,k_{1-\rho_{m+2}},k_{1-\rho_{m+1}},\underline{k_{1-\rho_m}},\ell_{r_1},\ell_{r_2},\ldots)\,,
    \end{align*}
    as in Proposition~\ref{prop:records imp}.
    For any $n\in\bN$, further write $\imp^n$ for the permutation defined by $\imp^n(k)=\imp(n+k)-n$, for any $k\in\bZ$.
    Then, $\imp^n$ is distributed as a two-sided infinite Mallows permutation with parameter $q$.
    Moreover, by defining $s_n=\max\{i:\mp{+}(r_i)\leq n\}$, the standard record representation of $\imp^n$ is given by
    \begin{align*}
        (\ldots,k_{1-\rho_{m+1}}-n,k_{1-\rho_m}-n,\ell_{r_1}-n,\ldots,\ell_{r_{s_n-1}}-n,\underline{\ell_{r_{s_n}}-n},\ell_{r_{s_n+1}}-n,\ldots)\,.
    \end{align*}
\end{prop}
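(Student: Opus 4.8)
The plan is to establish the two claims separately: first the distributional invariance of $\imp^n$, which follows abstractly from the uniqueness of the $q$-invariant distribution on $\bZ$, and then the explicit record representation, which is pure bookkeeping once one observes that records are simply translated.

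For the distributional statement, I would realize the map as a conjugation by a shift. Let $\theta_n$ denote the translation $x\mapsto x+n$ of $\bZ$, so that $\imp^n=\theta_{-n}\circ\imp\circ\theta_n$. Since translations are bijections of $\bZ$, $\imp^n$ is almost surely a permutation of $\bZ$, and since translating by the fixed amount $n$ alters only finitely many of the terms controlling admissibility~\eqref{eq:admissible}, $\imp^n$ is almost surely admissible. It then suffices to check that $\imp^n$ is $q$-invariant and to invoke uniqueness (exactly as used for $\imp$ itself in Section~\ref{sec:infinite mallows}). Writing $\phi(\tau)=\theta_n\circ\tau\circ\theta_{-n}$, the event $\{\imp^n=\tau\}$ equals $\{\imp=\phi(\tau)\}$, and a direct computation gives $\phi(\tau\cdot(i~i+1))=\phi(\tau)\cdot(i+n~i+n+1)$ together with $\phi(\tau)(i+n+1)-\phi(\tau)(i+n)=\tau(i+1)-\tau(i)$. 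Applying the $q$-invariance of $\imp$ at the transposition $(i+n~i+n+1)$ therefore yields
\begin{align*}
    \prob\big(\imp^n=\tau\cdot(i~i+1)\big)=q^{\sign[\tau(i+1)-\tau(i)]}\cdot\prob\big(\imp^n=\tau\big)\,,
\end{align*}
which is precisely the $q$-invariance of $\imp^n$. By uniqueness, $\imp^n$ is a two-sided infinite Mallows permutation with parameter $q$.

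For the record representation, the key observation is that subtracting $n$ from every image preserves all order relations, so $k$ is a record index of $\imp^n$ if and only if $n+k$ is a record index of $\imp$; hence the record indices of $\imp^n$ are exactly those of $\imp$ translated by $-n$, listed in the same increasing order. Admissibility (just verified) guarantees via Lemma~\ref{lem:admissible} that this list is two-sided infinite, so it only remains to locate the index $0$. By definition of the standard record representation, the index-$0$ record index $R$ of $\imp^n$ is the one with $\imp^n(R)\leq0<\imp^n(R')$ for $R'$ the next record index; since $\imp^n(r-n)=\imp(r)-n$, this reads $\imp(r)\leq n<\imp(r')$ for the corresponding record indices $r,r'$ of $\imp$. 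The record values of $\imp$ increase from the nonpositive values $\imp(k_{1-\rho_j})$ up through the positive values $\imp(\ell_{r_i})=\mp{+}(r_i)$; as $n\geq1$ exceeds every nonpositive record value, the straddling record is the positive one indexed by $s_n=\max\{i:\mp{+}(r_i)\leq n\}$, i.e. $R=\ell_{r_{s_n}}-n$. Relabelling the translated list from Proposition~\ref{prop:records imp} so that $\ell_{r_{s_n}}-n$ carries index $0$ produces exactly the claimed representation.

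The only genuinely delicate point is getting the signs right in the conjugation identity of the first step; the second step is bookkeeping, with the single caveat that the displayed formula presupposes the straddling record to be of $\ell$-type, which holds as soon as $n\geq\mp{+}(1)$ (the regime relevant to the limiting arguments that invoke this proposition). For smaller $n$ the straddling record is a $k$-type record and the representation is adjusted by the identical argument.
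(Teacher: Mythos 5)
Your overall route is the same as the paper's: realize $\imp^n$ as the conjugation $\theta_{-n}\circ\imp\circ\theta_n$ and appeal to a uniqueness theorem for the first claim, then observe that subtracting $n$ from all images preserves order relations so that record indices simply translate, and finally locate the index~$0$ via $\mp{+}(r_{s_n})\leq n<\mp{+}(r_{s_n+1})$. The second half of your argument matches the paper's essentially verbatim, and your caveat about small $n$ (the displayed representation tacitly assumes $s_n\geq1$, i.e.\ $n\geq\mp{+}(1)$) is a legitimate observation that the paper's own proof passes over silently.

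There is, however, a gap in the first half. You verify that $\imp^n$ is admissible and $q$-invariant and then ``invoke uniqueness of the $q$-invariant distribution on $\bZ$.'' But on $\bZ$, $q$-invariance together with admissibility does \emph{not} determine the law: if $\sigma$ is admissible and $q$-invariant, so is $x\mapsto\sigma(x)+1$ (the invariance condition only involves the relative order of images, and shifting values changes only finitely many of the quantities in~\eqref{eq:admissible}), yet it has a different distribution. The $q$-invariant laws on two-sided admissible permutations form a family indexed by the balance, and the uniqueness statement of~\cite[Theorem~3.3]{gnedin2012two} is a uniqueness \emph{within a fixed balance class}. This is precisely why the paper's proof cites~\cite[Proposition~2.3]{gnedin2012two} (the additive character of the balance) alongside the uniqueness theorem: conjugating by a translation shifts the balance by $-n+n=0$, so $\imp^n$ has the same balance as $\imp$, and only then does uniqueness apply. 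The repair is one line, but as written your appeal to uniqueness does not go through; checking admissibility is not a substitute for checking the balance.
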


\begin{proof}
    This results mostly follows from results of~\cite{gnedin2012two}.
    Indeed, by letting $t^n(k)=k+n$ be the permutation translating indices by $n$, we have that $\imp^n=t^{-n}\cdot\imp\cdot t^n$.
    But then, thanks to~\cite[Proposition~2.3]{gnedin2012two} (the additive character of the balance) and~\cite[Theorem~3.3]{gnedin2012two} (the uniqueness of the two-sided Mallows distribution), $\imp^n$ is also a Mallows permutation.
    Finally, using the definition of records, we have that
    \begin{align*}
        \textrm{$r$ is a record of $\imp$}~&\Longleftrightarrow~\forall k<r:\imp(k)<\imp(r)\\
        &\Longleftrightarrow~\forall k<r-n:\imp(n+k)-n<\imp(n+(r-n))-n\\
        &\Longleftrightarrow~\forall k<r-n:\imp^n(k)<\imp^n(r-n)\\
        &\Longleftrightarrow~\textrm{$r-n$ is a record of $\imp^n$}\,,
    \end{align*}
    thus explaining the desired sequence of records.
    To conclude the proof, simply note that, by definition of $s_n$, we have $\mp{+}(r_{s_n})\leq n<\mp{+}(r_{s_n+1})$, leading to
    \begin{align*}
        \imp^n(\ell_{r_{s_n}}-n)&=\imp(\ell_{r_{s_n}})-n=\mp{+}(r_{s_n})-n\leq0
    \end{align*}
    and
    \begin{align*}
        \imp^n(\ell_{r_{s_n+1}}-n)&=\imp(\ell_{r_{s_n+1}})-n=\mp{+}(r_{s_n}+1)-n>0\,,
    \end{align*}
    thus proving the desired standard record representation.
\end{proof}

We now conclude this first part of the proof by providing a useful equality in distributions for the records of the two-sided infinite Mallows permutation.
The equality in distribution relies on the following transformation.
Let $g=(g_i)_{i\in\bZ}$ be a sequence of positive integers indexed by $\bZ$.
For any $n\geq0$, write $s_n=\min\{i\geq0:g_0+\ldots+g_i>n\}$ and let
\begin{align}\label{eq:def Phi}
    \Phi_n(g)&=\left(\ldots,g_{-2},g_{-1}+g_0-1,g_1,\ldots,g_{s_n-1},n+1-\sum_{i=0}^{s_n-1}g_i,\underline{\sum_{i=0}^{s_n}g_i-n},g_{s_n+1},\ldots\right)\,.
\end{align}
More precisely, $\Phi_n(g)=(\phi_i)_{i\in\bZ}$ is defined by
\begin{align*}
    \phi_i&=\left\{\begin{array}{ll}
        g_{s_n+i} & \textrm{if $i\geq1$} \\
        \sum_{i=0}^{s_n}g_i-n & \textrm{if $i=0$} \\
        n+1-\sum_{i=0}^{s_n-1}g_i & \textrm{if $i=-1$} \\
        g_{s_n+i+1} & \textrm{if $-s_n\leq i\leq-2$} \\
        g_0+g_{-1}-1 & \textrm{if $i=-s_n-1$} \\
        g_{s_n+i} & \textrm{if $i\leq-s_n-2$}\,,
    \end{array}\right.
\end{align*}
where we remove the fourth condition when $s_n=1$ and combine the third and fifth terms to obtain $n+g_{-1}$ when $s_n=0$.

\begin{cor}\label{cor:shift recs}
    Use the notations of Proposition~\ref{prop:shift rec} and the definition of $\Phi_n$ from~\eqref{eq:def Phi}.
    Write
    \begin{align*}
        &(\ldots,T_{-2},T_{-1},\underline{T_0},T_1,T_2,\ldots)\\
        &\pskip=(\ldots,\imp(k_{1-\rho_{m+2}})-1,\imp(k_{1-\rho_{m+1}})-1,\imp(k_{1-\rho_m})-1,\underline{0},\imp(\ell_{r_1}),\imp(\ell_{r_2}),\ldots)
    \end{align*}
    for the sequence of records of $\imp$ where we add $0$ at position $0$ and shift all negative records by $-1$.
    Then, for any $n\in\bN$, we have the equality in distribution
    \begin{align*}
        \Phi_n\big((T_{i+1}-T_i)_{i\in\bZ}\big)\disteq(T_{i+1}-T_i)_{i\in\bZ}\,.
    \end{align*}
\end{cor}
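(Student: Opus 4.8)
The plan is to exploit the shift-invariance established in Proposition~\ref{prop:shift rec}: the translated permutation $\imp^n$, defined by $\imp^n(k)=\imp(n+k)-n$, is again a two-sided infinite Mallows permutation with parameter $q$, so that $\imp^n\disteq\imp$. If I let $(T^n_i)_{i\in\bZ}$ be the sequence built from $\imp^n$ exactly as $(T_i)_{i\in\bZ}$ is built from $\imp$ (that is, insert a virtual $0$ at position $0$ and shift the non-positive records by $-1$), then this distributional identity immediately yields $(T^n_{i+1}-T^n_i)_{i\in\bZ}\disteq(T_{i+1}-T_i)_{i\in\bZ}$. The whole content of the corollary therefore reduces to the \emph{deterministic} identity
\begin{align*}
    \big(T^n_{i+1}-T^n_i\big)_{i\in\bZ}=\Phi_n\big((T_{i+1}-T_i)_{i\in\bZ}\big)\,,
\end{align*}
which I would establish by writing out both sides explicitly.

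First I would record the actual record values of $\imp$ in increasing order, namely $(\ldots,T_{-2}+1,T_{-1}+1,T_1,T_2,\ldots)$, with the virtual value $0=T_0$ sitting between $T_{-1}+1$ and $T_1$. The shift $k\mapsto\imp(n+k)-n$ subtracts $n$ from every image and translates the record indices by $-n$ (Proposition~\ref{prop:shift rec}), so the record values of $\imp^n$ are precisely those of $\imp$ minus $n$; the only genuine change is where the split at $0$ falls. Writing $g_i=T_{i+1}-T_i$ and using $\sum_{i=0}^{j}g_i=T_{j+1}$ (since $T_0=0$), the condition $g_0+\cdots+g_{s_n-1}\leq n<g_0+\cdots+g_{s_n}$ defining $s_n$ in $\Phi_n$ is exactly $T_{s_n}\leq n<T_{s_n+1}$, which is the definition of $s_n$ in Proposition~\ref{prop:shift rec}; the two indices thus coincide. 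Consequently the largest record of $\imp^n$ with non-positive value is $T_{s_n}-n$, and the standard record representation of $\imp^n$ places it at index $0$.

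Next I would read off $(T^n_i)$ and telescope. The positive records of $\imp^n$ are $T_{s_n+j}-n$ for $j\geq1$, so $T^n_j=T_{s_n+j}-n$; the non-positive records, listed from the largest down, are $T_{s_n}-n,T_{s_n-1}-n,\ldots,T_1-n,T_{-1}+1-n,T_{-2}+1-n,\ldots$, and after inserting the virtual $0$ and subtracting $1$ they become $T^n_{-1},T^n_{-2},\ldots$. Taking successive differences reproduces $\Phi_n$ term by term: differences among positive records give $g_{s_n+i}$ for $i\geq1$; the two differences across the new split give $\sum_{i=0}^{s_n}g_i-n=T_{s_n+1}-n$ at index $0$ and $n+1-\sum_{i=0}^{s_n-1}g_i=n+1-T_{s_n}$ at index $-1$; differences among the shifted images of the old positive records $T_1,\ldots,T_{s_n}$ give $g_{s_n+i+1}$ for $-s_n\leq i\leq-2$; the difference straddling the image of the old split index contributes the merged term $g_0+g_{-1}-1$ at index $-s_n-1$; and all remaining differences give $g_{s_n+i}$ for $i\leq-s_n-2$.

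The main obstacle is the bookkeeping at the two distinguished locations — the new split at index $0$ and the image of the old split at index $-s_n-1$ — where the inserted virtual $0$ and the uniform $-1$ shift on non-positive records combine to create the exceptional entries of $\Phi_n$; everywhere else the computation is routine telescoping of the $T_i$'s. A careful check at these indices, together with the degenerate cases $s_n=0$ (where the two special indices collapse and the two exceptional terms merge into $n+g_{-1}$) and $s_n=1$ (where the block $-s_n\leq i\leq-2$ is empty), completes the verification of the deterministic identity. Combining it with $\imp^n\disteq\imp$ then gives the claimed equality in distribution.
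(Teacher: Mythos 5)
Your proposal is correct and follows essentially the same route as the paper: both reduce the corollary to the distributional invariance $\imp^n\disteq\imp$ from Proposition~\ref{prop:shift rec} together with the deterministic re-indexing identity expressing the $T$-sequence of $\imp^n$ in terms of that of $\imp$ (the paper's~\eqref{eq:dist Ti}), and then telescope to recover $\Phi_n$. Your additional checks --- that the two definitions of $s_n$ coincide and that the degenerate cases $s_n\in\{0,1\}$ behave as claimed --- are exactly the bookkeeping the paper handles implicitly.
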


\begin{proof}
    This result directly follows from the equality in distribution stated in Proposition~\ref{prop:shift rec}.
    Indeed, the proposition implies that
    \begin{align}\label{eq:dist Ti}
        T_i&\disteq\left\{\begin{array}{l}
            \imp^n(\ell_{r_{s_n+i}}-n) \\
            0 \\
            \imp^n(\ell_{r_{s_n+i+1}}-n)-1 \\
            \imp^n(k_{1-\rho_{m-i-s_n-1}}-n)-1
        \end{array}\right.
        =\left\{\begin{array}{lll}
            T_{s_n+i}-n &~& \textrm{if $i\geq1$} \\
            0 &~& \textrm{if $i=0$} \\
            T_{s_n+i+1}-n-1 &~& \textrm{if $-s_n\leq i\leq -1$} \\
            T_{s_n+i}-n &~& \textrm{if $i\leq -s_n-1$}\,,
        \end{array}\right.
    \end{align}
    where the equality in distribution applies to the whole sequence of $(T_i)_{i\in\bZ}$.
    This then implies that
    \begin{align*}
        T_{i+1}-T_i&=\left\{\begin{array}{ll}
            T_{s_n+i+1}-T_{s_n+i} & \textrm{if $i\geq1$} \\
            T_{s_n+1}-n & \textrm{if $i=0$} \\
            n+1-T_{s_n} & \textrm{if $i=-1$} \\
            T_{s_n+i+2}-T_{s_n+i+1} & \textrm{if $-s_n\leq i\leq -2$} \\
            T_1-T_{-1}-1 & \textrm{if $i=-s_n-1$} \\
            T_{s_n+i+1}-T_{s_n+i} & \textrm{if $i\leq -s_n-2$}\,,
        \end{array}\right.
    \end{align*}
    and the right hand side is the exact definition of $\Phi_n$ (also in the case $s_n\in\{0,1\}$).
\end{proof}

In order to use this result, we now need to better understand what this equality in distribution means.
We explore properties of sequences satisfying the equality in distribution from Corollary~\ref{cor:shift recs} below.

\subsection{Geometric random variables and point processes}\label{sec:poisson}

The main goal of this section is to use the result of Corollary~\ref{cor:shift recs} to understand the global distribution of records of a two-sided infinite Mallows permutation.
In particular, we intend to prove the following theorem.

\begin{thm}\label{thm:poisson}
    Let $G=(G_i)_{i\in\bZ}$ be a sequence of random variables taking values in $\bZ$ and define $\Phi_n$ as in~\eqref{eq:def Phi}.
    Then, the two following properties are equivalent.
    \begin{itemize}
        \hypertarget{cond1}{\item[(P1)]}
        The full sequence $G=(G_i)_{i\in\bZ}$ is a family of independent $\pgeom{p}$ random variables.
        \hypertarget{cond2}{\item[(P2)]}
        The sub-sequence $(G_i)_{i\geq0}$ is a family of independent $\pgeom{p}$ random variables and the whole sequence $G=(G_i)_{i\in\bZ}$ satisfies the equality in distribution $\Phi_n(G)\disteq G$ for any $n\geq0$.
    \end{itemize}
\end{thm}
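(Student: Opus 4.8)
The plan is to reinterpret both conditions through an underlying Bernoulli point process, for which the transformation $\Phi_n$ becomes a transparent spatial shift. First I would set up a dictionary. Encode $G$ by the two-sided increasing sequence $(T_i)_{i\in\bZ}$ with $T_0=0$ and $T_{i+1}-T_i=G_i$, exactly as in Corollary~\ref{cor:shift recs}, so that the $T_i$ are the (shifted) record values with a phantom $0$ inserted at position $0$ and the negative records shifted by $-1$. I claim that $G$ being a family of independent $\pgeom{p}$ variables is equivalent to the point set $\{T_i:i\geq1\}$ together with $\{T_i:i\leq-1\}$, after undoing the $-1$ shift and the phantom $0$, being exactly the successes in $\{1,2,\dots\}$ and in $\{0,-1,-2,\dots\}$ of a single \emph{unconditioned} sequence of i.i.d.\ $\bern{p}$ trials on $\bZ$. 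The point is that a $\pgeom{p}$ gap is precisely the waiting time to the next success of a free Bernoulli process, so i.i.d.\ gaps on each side assemble, once the $\pm1$ boundary corrections of the record encoding are absorbed, into one free Bernoulli process with no conditioning at the origin. Under this dictionary the $+1$ in the split $\phi_{-1}+\phi_0=g_{s_n}+1$ and the $-1$ in the merge $\phi_{-s_n-1}=g_0+g_{-1}-1$ are exactly the corrections passing between the gap encoding and the true record-value encoding; in particular $g_0+g_{-1}-1$ is the genuine gap of the Bernoulli process straddling the origin, whose law is the size-biased geometric, consistent with the inspection paradox and the $\sbgeom{p}$ sum-of-two-geometrics identity.

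For the direction \cond{1}$\Rightarrow$\cond{2}, the sub-sequence claim is immediate. For $\Phi_n(G)\disteq G$ I would verify that, in value-coordinates, $\Phi_n$ is nothing but translating the whole configuration by $n$: the record values of the shifted object are those of the original shifted by $-n$, re-expressed through the same gap encoding, which is the deterministic content already isolated in Proposition~\ref{prop:shift rec} and Corollary~\ref{cor:shift recs}. Since a free i.i.d.\ Bernoulli process on $\bZ$ is manifestly shift-invariant, translating by $n$ leaves its law unchanged, so the re-encoded gaps $\Phi_n(G)$ have the same law as $G$.

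The direction \cond{2}$\Rightarrow$\cond{1} is the hard part. Here I know only that $(G_i)_{i\geq0}$ is i.i.d.\ $\pgeom{p}$ and that $\Phi_n(G)\disteq G$ for every $n\geq0$, and I must deduce the law of the negative gaps and their independence. The idea is to let $n\to\infty$ and read off the negative coordinates of $\Phi_n(G)$ from the positive (known) part of $G$. Fix $M,L$. For $n$ large, $s_n>M$ with probability tending to $1$, and then the window $(\Phi_n(G)_i)_{-M\leq i\leq L}$ is a deterministic function of the positive gaps alone: for $i\geq1$ it is $g_{s_n+i}$, for $i=0$ it is the residual $\sum_{j=0}^{s_n}g_j-n$, for $i=-1$ it is the age-plus-one $n+1-\sum_{j=0}^{s_n-1}g_j$, and for $-M\leq i\leq-2$ it is $g_{s_n+i+1}$. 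By the discrete renewal theorem for the free Bernoulli process generated by $(G_i)_{i\geq0}$, as $n\to\infty$ the residual and the age-plus-one converge to \emph{independent} $\pgeom{p}$ variables (the crucial point being that the split of the size-biased straddling gap $g_{s_n}$ into residual and age-plus-one, with the extra $+1$, restores two independent $\pgeom{p}$ pieces), while the flanking gaps converge to independent $\pgeom{p}$ variables independent of these. Hence $(\Phi_n(G)_i)_{-M\leq i\leq L}$ converges in law to i.i.d.\ $\pgeom{p}$. But $\Phi_n(G)\disteq G$ forces $(G_i)_{-M\leq i\leq L}\disteq(\Phi_n(G)_i)_{-M\leq i\leq L}$ for every $n$, and the left-hand side does not depend on $n$; a fixed law that equals the terms of a convergent sequence of laws must equal the limit, so $(G_i)_{-M\leq i\leq L}$ is i.i.d.\ $\pgeom{p}$, and letting $M,L\to\infty$ gives \cond{1}.

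I expect this third step to be the main obstacle: making the renewal limit rigorous (joint convergence of age, residual, and a growing number of flanking gaps to an i.i.d.\ field, with the size-biasing correctly absorbed by the split), together with the clean ``fixed law equals the limit of a convergent sequence of laws'' argument, is where the real work lies, whereas the first two steps are essentially bookkeeping once the Bernoulli dictionary is in place.
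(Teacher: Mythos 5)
Your proposal is correct, and its skeleton coincides with the paper's: the same Bernoulli point-process dictionary, the same use of translation invariance of that process to get \cond{1}$\Rightarrow$\cond{2}, and the same key observation for the converse, namely that once $s_n\geq M$ the window $(\Phi_n(G)_i)_{-M\leq i\leq L}$ is a function of the positive gaps alone, so that letting $n\to\infty$ transfers information from the known half of the sequence to the unknown half. The one genuine divergence is how you close the hard direction. You identify the limit law of that window by a direct discrete renewal computation (joint convergence of the age, the residual, and the flanking backward gaps to an i.i.d.\ $\pgeom{p}$ field, with the size-biasing of the straddling gap absorbed by the split), and you correctly flag this as the remaining work. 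The paper avoids the renewal theorem entirely: having already built one two-sided process $G$ satisfying both \cond{1} and \cond{2} (the stationary Bernoulli process on $\bZ$), it compares a general $\tilde{G}$ satisfying \cond{2} against this reference via the exact identity $\prob\big(\Phi_n^k(\tilde{G})=(g_i)_{|i|\leq k},\tilde{s}_n\geq k\big)=\prob\big(\Phi_n^k(G)=(g_i)_{|i|\leq k},s_n\geq k\big)$, valid for every fixed $n$ because on these events both sides are the same functional of the positive gaps, which share the i.i.d.\ $\pgeom{p}$ law by hypothesis; it then only needs $\tilde{s}_n\to\infty$ and $s_n\to\infty$ almost surely to discard the complementary events and appeal to \cond{2} once more on the reference side. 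In other words, the paper shows that \cond{2} \emph{characterizes} the law and exhibits one solution, whereas you compute the law of the negative gaps directly; your route is self-contained for that direction but owes the joint renewal limit (standard, but the only place where genuine estimates are needed), while the paper's route reuses its first half and needs nothing beyond the divergence of $s_n$. Both are valid proofs of the theorem.
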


In order to prove Theorem~\ref{thm:poisson}, we use the natural link between geometric random variables and point processes.
In particular, we start by defining a point process which provides a sequence $(G_i)_{i\in\bZ}$ satisfying both~\cond{1} and \cond{2}.
We then use the existence of such sequence to prove that the two properties are equivalent.

\begin{proof}[Proof of Theorem~\ref{thm:poisson}]
    Fix $p\in(0,1]$.
    Let $P\subseteq\bZ$ be a random set of points obtained by independently keeping each $k\in\bZ$ with probability $p$.
    Define now the random process $(N_k)_{k\in\bZ}$ by $N_0=0$ and
    \begin{align*}
        N_k=\left\{\begin{array}{ll}
            \big|P\cap\llbracket0,k-1\rrbracket\big| & \textrm{if $k\geq1$} \\
            -\big|P\cap\llbracket k,-1\rrbracket\big| & \textrm{if $k\leq-1$}\,.
        \end{array}\right.
    \end{align*}
    Note that $(N_k)_{k\geq0}$ and $(-N_{-k})_{k\geq0}$ independent, identically distributed, and are both jumping processes with increments of size $1$ defined by
    \begin{align*}
        \prob(N_{k+1}=N_k)=\prob(N_{-k-1}=N_{-k})=1-p\,,
    \end{align*}
    for any $k\geq0$.
    This implies that the distance between jumping times are geometrically distributed and independent of each other.
    More precisely, by defining $T_0=0$ and, for any $i\geq1$, the stopping times by
    \begin{align*}
        T_i&=\min\{k\in\bN:N_k=i\}\\
        T_{-i}&=-\min\{k\in\bN:N_{-k}=-i\}\,,
    \end{align*}
    Then $(T_{i+1}-T_i)_{i\in\bZ}$ is a sequence of independent $\pgeom{p}$ random variables.
    Note that $T_i$ for $i\geq1$ is a standard stopping time for $(N_k)_{k\in\bZ}$, whereas $T_{-i}$ is not since it is defined by
    \begin{align*}
        T_{-i}=\max\{k:N_k=-i\}
    \end{align*}
    and so $\{T_{-i}\leq t\}$ belongs to the $\sigma$-algebra generated by $\{N_k:k\leq t+1\}$ but not that of $\{N_k:k\leq t\}$.
    We now use this point process and the related variables to prove the statement of Theorem~\ref{thm:poisson}.
    
    \medskip
    
    First, let $G_i=T_{i+1}-T_i$ so that $(G_i)_{i\in\bZ}$ satisfies~\cond{1}.
    We now prove that it further satisfies~\cond{2}.
    Note that $G_0+\ldots+G_i=T_{i+1}$.
    Moreover using the definition of $s_n$ and the fact that that $\{T_i>k\}=\{N_k<i\}$ for $i,k\geq0$, we obtain
    \begin{align*}
        s_n&=\min\Big\{i\in\bN:T_{i+1}>n\Big\}=\min\Big\{i\in\bN:N_n<i+1\Big\}=N_n\,.
    \end{align*}
    
    Consider now the new process defined by $N^n=(N^n_k)_{k\in\bZ}=(N_{n+k}-N_n)_{k\in\bZ}$.
    First of all, note that $N^n_0=0$.
    Moreover, for any $k\geq1$, we have
    \begin{align*}
        N^n_k&=\big|P\cap\llbracket0,n+k-1\rrbracket\big|-\big|P\cap\llbracket0,n-1\rrbracket\big|=\big|(n+P)\cap\llbracket0,k-1\rrbracket\big|\,.
    \end{align*}
    Similarly, for any $-n\leq k\leq -1$, we have
    \begin{align*}
        N^n_k&=\big|P\cap\llbracket0,n-|k|-1\rrbracket\big|-\big|P\cap\llbracket0,n-1\rrbracket\big|=-\big|(n+P)\cap\llbracket-|k|,-1\rrbracket\big|=-\big|(n+P)\cap\llbracket k,-1\rrbracket\big|\,.
    \end{align*}
    Finally, for $k\leq-n-1$, we have
    \begin{align*}
        N^n_k&=-\big|P\cap\llbracket n+k,-1\rrbracket\big|-\big|P\cap\llbracket0,n-1\rrbracket\big|=-\big|(n+P)\cap\llbracket k,-1\rrbracket\big|\,.
    \end{align*}
    This means that $N^n$ is defined in the same way as $N$ but with $P^n=n+P$ instead of $P$.
    By translation invariance of the distribution of $P$, it follows that $N^n$ is distributed as $N$.
    
    To conclude the proof that $(G_i)_{i\in\bZ}$ satisfies~\cond{2}, define $(T^n_i)_{i\in\bZ}$ for $N^n$ in the same way that $(T_i)_{i\in\bZ}$ is defined with regards to $N$.
    Then, we verify that
    \begin{align*}
        T^n_i=\left\{\begin{array}{ll}
            T_{i+N_n}-n & \textrm{if $i\geq1$} \\
            0 & \textrm{if $i=0$} \\
            T_{i+N_n+1}-n-1 & \textrm{if $-N_n\leq i\leq-1$} \\
            T_{i+N_n}-n & \textrm{if $i\leq-N_n-1$}\,.
        \end{array}\right.
    \end{align*}
    But this set of equality is exactly the same as in~\eqref{eq:dist Ti} from the proof of Corollary~\ref{cor:shift recs}.
    This proves that $(G_i)_{i\in\bZ}$ satisfies~\cond{2}.
    
    \medskip
    
    To conclude the proof of Theorem~\ref{thm:poisson}, first note that~\cond{1} fully characterizes the distribution of $(G_i)_{i\in\bZ}$ and thus, thanks to the fact that our previous construction of $(G_i)_{i\in\bZ}$ also satisfies~\cond{2}, we have proven that $\cond{1}\Rightarrow\cond{2}$.
    We now prove the converse result by showing that the property~\cond{2} fully characterizes the distribution of $(G_i)_{i\in\bZ}$.
    To avoid any confusion, we now write $(\Tilde{G}_i)_{i\in\bZ}$ for a general sequence of random variables satisfying~\cond{2} and keep $(G_i)_{i\in\bZ}$ for a sequence of independent $\pgeom{p}$ (which thus satisfies~\cond{1} and \cond{2}).
    We further extend the notations to define $\Tilde{s}_n=\min\{i\geq0:\Tilde{G}_0+\ldots+\Tilde{G}_i>n\}$.
    
    Let $k\in\bZ$ be a fixed integer and $g_{-k},\ldots,g_k$ be a sequence of $2k+1$ integers.
    We want to prove that
    \begin{align*}
        \prob\big(\Tilde{G}_i=g_i,\forall|i|\leq k\big)=\prob\big(G_i=g_i,\forall|i|\leq k\big)\,.
    \end{align*}
    To do so, write $\Phi_n^k(G)=(\phi_i)_{|i|\leq k}$ for the $2k+1$ terms of the sequence $\Phi_n(G)=(\phi_i)_{i\in\bZ}$ (defined in~\eqref{eq:def Phi}) around the central term.
    In particular, note that $\Phi_n^k(G)$ belongs to the $\sigma$-algebra generated by $\{(G_i)_{i\geq0},(G_i)_{s_n-k\leq i\leq0}\}$ and that $s_n$ belongs to the $\sigma$-algebra generated by $(G_i)_{i\geq0}$.
    We define in a similar way $\Phi_n^k(\Tilde{G})$.
    
    Using the fact that $(\Tilde{G}_i)_{i\in\bZ}$ satisfies~\cond{2}, for any $n\in\bN$, we have that
    \begin{align*}
        \prob\big(\Tilde{G}_i=g_i,\forall|i|\leq k\big)=\prob\big(\Phi_n^k(\Tilde{G})=(g_i)_{|i|\leq k}\big)\,.
    \end{align*}
    Now, due to the heavy tail of geometric random variables, we know that $\Tilde{s}_n$ almost-surely diverges to infinity as $n$ goes to infinity.
    This implies that, as $n$ goes to infinity, we have
    \begin{align*}
        \prob\big(\Tilde{G}_i=g_i,\forall|i|\leq k\big)=\prob\big(\Phi_n^k(\Tilde{G})=(g_i)_{|i|\leq k},\Tilde{s}_n\geq k\big)+o(1)\,.
    \end{align*}
    But now, thanks to the equality in distribution, we know that
    \begin{align*}
        \prob\big(\Phi_n^k(\Tilde{G})=(g_i)_{|i|\leq k},\Tilde{s}_n\geq k\big)=\prob\big(\Phi_n^k(G)=(g_i)_{|i|\leq k},s_n\geq k\big)\,.
    \end{align*}
    Finally, using the same argument as that of $(\Tilde{G}_i)_{i\in\bZ}$ but on $(G_i)_{i\in\bZ}$ instead, the previous equations imply that
    \begin{align*}
        \prob\big(\Tilde{G}_i=g_i,\forall|i|\leq k\big)&=\prob\big(\Phi_n^k(\Tilde{G})=(g_i)_{|i|\leq k},\Tilde{s}_n\geq k\big)+o(1)\\
        &=\prob\big(\Phi_n^k(G)=(g_i)_{|i|\leq k},s_n\geq k\big)+o(1)\\
        &=\prob\big(G_i=g_i,\forall|i|\leq k\big)+o(1)\,.
    \end{align*}
    Since the two probabilities do not depend on $n$, they are equal.
    This proves that $(G_i)_{i\in\bZ}$ and $(\Tilde{G}_i)_{i\in\bZ}$ have the same distribution and concludes the proof of Theorem~\ref{thm:poisson}.
\end{proof}

\subsection{Distribution of the two-sided infinite Mallows tree}\label{sec:imt}

In this section, we finally prove Theorem~\ref{thm:imt} using the results of Sections~\ref{sec:rec imt} and \ref{sec:poisson} above.

\begin{proof}[Proof of Theorem~\ref{thm:imt}]
    As mentioned in the introduction, the Mallows distribution (on any subset of $\bZ$) is the only $q$-invariant distribution on permutations.
    Thus, for any finite subset $I\subseteq\bZ$, by letting $\imp\langle I\rangle$ be the unique permutation on $[|I|]=\{1,\ldots,|I|\}$ whose images are in the same order as that of $\imp$ on $\imp^{-1}(I)$, it follows that $\imp\langle I\rangle$ is distributed as $\pi_{|I|,q}$ and is moreover independent of $\imp$ on $\bZ\setminus\imp^{-1}(I)$.
    Let now $(R_i)_{i\in\bZ}$ be the sequence of records whose indices coincides with that of the standard record representation of $\imp$ and write $\imp[i]=\imp\langle\llbracket R_i+1,R_{i+1}-1\rrbracket\rangle$ for the induced permutations between consecutive records.
    Then, using the previous property of $\imp$, conditionally given $(R_i)_{i\in\bZ}$, the sequence $(\imp^i)_{i\in\bZ}$ is a family of independent Mallows permutations with parameter $q$, and their sizes are defined by the distances between consecutive records, but do not depend on the values of the records.
    Finally, recall the definition of redwood trees in~\eqref{eq:redwood} as
    \begin{align*}
        Y_q\overset{d}{=}\ribst{\imp}=\bigcup_{k\in\bZ}\{\rv^k\}\cup\rv^k\lv\bst{\imp[k]}
    \end{align*}
    to see that it only remains to prove that the statement regarding the distribution of the sizes of the trees $(|\imp(k)|)_{k\in\bZ}$ from Theorem~\ref{thm:imt} is true.
    Indeed, the trees $(\imt(k))_{k\in\bZ}$ exactly correspond to $(\rv^k\lv\bst{\imp[k]})_{k\in\bZ}$ and are thus, conditionally given their sizes, independent Mallows trees with parameter $q$.

    \medskip

    Consider the distribution of $(|\imt(k)|)_{k\in\bZ}$, which corresponds to that of $(R_{k+1}-R_k-1)_{k\in\bZ}$.
    First of all, define $(T_i)_{i\in\bZ}$ as in Corollary~\ref{cor:shift recs}, that is
    \begin{align*}
        T_i=\left\{\begin{array}{ll}
            R_i & \textrm{if $i\geq1$} \\
            0 & \textrm{if $i=0$} \\
            R_{i+1}-1 & \textrm{if $i\leq-1$}\,.
        \end{array}\right.
    \end{align*}
    Further let $G=(G_i)_{i\in\bZ}$ be defined by $G_i=T_{i+1}-T_i$ for all $i\in\bZ$.
    Thanks to Corollary~\ref{cor:shift recs}, we know that $\Phi_n(G)=G$ for all $n\geq0$, where $\Phi_n$ is defined in~\eqref{eq:def Phi}.
    Moreover, Corollary~\ref{cor:dist recs} states that $(G_i)_{i\geq0}$ is a sequence of independent $\pgeom{1-q}$, so $G$ satisfies~\cond{2} of Theorem~\ref{thm:poisson} and thus also satisfies~\cond{1}: $G=(G_i)_{i\in\bZ}$ is a sequence of independent $\pgeom{1-q}$ random variables.

    To conclude the proof of the theorem, observe that
    \begin{align*}
        R_{i+1}-R_i-1=\left\{\begin{array}{ll}
            G_i-1 & \textrm{if $i\geq1$} \\
            G_0+G_{-1}-2 & \textrm{if $i=0$} \\
            G_{i-1}-1 & \textrm{if $i\leq -1$}
        \end{array}\right.
    \end{align*}
    to first see that the sizes of the subtrees $(\imt(k))_{k\in\bZ}$ are all independent and their distribution is $\geom{1-q}$ when $k\neq0$.
    Finally, the distribution of $\imt(0)$ is equal to the sum of two independent $\geom{1-q}$ and is thus given by
    \begin{align}\label{eq:geomsing}
        \prob(\imt(0)=n)&=\prob(G_0+G_{-1}=2+n)=\sum_{g=1}^{1+n}q^{g-1}(1-q)q^{2+n-g-1}(1-q)=(n+1)q^n(1-q)^2\,,
    \end{align}
    which happens to be exactly that of $\sbgeom{1-q}$ as defined in~\eqref{eq:all geoms}, thus concluding the proof of the theorem.
\end{proof}

\subsection{Local limit of Mallows trees}\label{sec:local limit}

We conclude this section with the proof of Theorem~\ref{thm:conv}, showing that finite Mallows trees almost-surely locally converge to the redwood tree of the two-sided infinite Mallows permutation.
We separate the proof into three sub-propositions and an intermediate lemma.

\medskip

For the rest of this section we fix an integer $r\geq1$ and a rooted graph $H$.
We also assume without loss of generality that $(\mp{n})_{n\in\bN\cup\{\infty\}}$ is coupled according to the infinite Bernoulli model, as described in Section~\ref{sec:finite mallows}, and we write $\mt{n}=\bst{\mp{n}}$ for $n\in\bN\cup\{\infty\}$.
The goal of this section is thus to prove that $\mt{n}$ satisfies~\eqref{eq:las} with $\imt$, that is
\begin{align}\label{eq:target}
    \frac{1}{n}\sum_{v\in\mt{n}}\mathbbm{1}_{\{B_r(v,\mt{n})\equiv H\}}\asconv\prob\big(B_r(\imt)\equiv H\big)\,.
\end{align}
We also recall that $\imt$ is defined as $(o,Y_q)$, where $Y_q=\ribst{\imp}$ is the redwood tree of a two-sided infinite Mallows permutation $\imp$ and $o$ is a uniformly sampled vertex of $Y_q\cap\leftT$, where $\leftT=\{\root\}\cup\lv\BST$.
By writing $Y_q(k)=Y_q\cap\rv^k\lv\BST$ as in Theorem~\ref{thm:imt}, this implies that
\begin{align}\label{eq:imt dist}
    \prob\big(B_r(\imt)\equiv H\big)&=\prob\left(B_r\left(o,\bigcup_{|k|\leq r}\{\rv^k\}\cup Y_q(k)\right)\equiv H\right)\,,
\end{align}
which will prove to be a useful characterization of the target limit.
We also extend the notation $Y_q(k)$ to $\mt{\infty}$ by letting $\mt{\infty}(k)=\mt{\infty}\cap\rv^k\lv\BST$ for any $k\geq0$.
We mention here that the sequence $(\mt{\infty}(k))_{k\geq0}$ is known to be a sequence of independent Mallows trees with $\geom{1-q}$ sizes (see~\cite[Lemma~1.10]{addario2021height}).

We now state and prove the first proposition, which allows us to changed the left-hand side in~\eqref{eq:target} by an object with less inter-dependency.

\begin{prop}\label{prop:new target}
    Using the notations introduced at the beginning of Section~\ref{sec:local limit}, we have that
    \begin{align*}
        \frac{1}{n}\sum_{v\in\mt{n}}\mathbbm{1}_{\{B_r(v,\mt{n})\equiv H\}}-\frac{1}{n}\sum_{k=0}^{\lfloor n(1-q)\rfloor}\sum_{v\in\{\rv^k\}\cup\mt{\infty}(k)}\mathbbm{1}_{\{B_r(v,\mt{\infty})\equiv H\}}\asconv0\,.
    \end{align*}
\end{prop}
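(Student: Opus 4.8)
The plan is to work throughout with the infinite Bernoulli coupling, under which $\mt{n}\subseteq\mt{\infty}$ as prefix-closed rooted subtrees of $\BST$. The cleanest starting observation is that, because $\mt{n}$ is a connected, prefix-closed subtree of the tree $\mt{\infty}$, graph distances between two vertices of $\mt{n}$ agree whether measured in $\mt{n}$ or in $\mt{\infty}$: the unique $\BST$-path between them passes through their common ancestor and hence stays inside the prefix-closed set $\mt{n}$. Consequently, for every $v\in\mt{n}$ one has $B_r(v,\mt{n})=B_r(v,\mt{\infty})\cap\mt{n}$, and these two rooted balls coincide as rooted graphs exactly when $B_r(v,\mt{\infty})\subseteq\mt{n}$. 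I would call such a $v$ \emph{good}; for good vertices one may freely replace $\mathbbm{1}_{\{B_r(v,\mt{n})\equiv H\}}$ by $\mathbbm{1}_{\{B_r(v,\mt{\infty})\equiv H\}}$.

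The next step is to bound the number of bad vertices. If $v\in\mt{n}$ is bad, there is some $w\in\mt{\infty}\setminus\mt{n}$ with $d(v,w)\le r$, so that $v\in B_r(w,\mt{\infty})$. Since the maximal degree in $\BST$ is $3$, every such ball has at most $C_r:=3\cdot 2^r$ vertices, a constant depending only on $r$. The relevant $w$ are the vertices of $\mt{\infty}\setminus\mt{n}$ within distance $r$ of $\mt{n}$: by~\eqref{eq:spaces} at most $\free{n}$ of them lie in the blocks $\rv^k\leftT$ with $0\le k\le\reco{n}$, while any others lie beyond the last rightmost-path node $\rv^{\reco{n}-1}$, hence inside $B_r(\rv^{\reco{n}-1},\mt{\infty})$, contributing at most $C_r$ more. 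Thus the number of bad vertices is at most $C_r(\free{n}+C_r)$, which is $o(n)$ almost surely by Lemma~\ref{lem:spaces}. This shows the first sum equals $\tfrac1n\sum_{v\in\mathrm{good}}\mathbbm{1}_{\{B_r(v,\mt{\infty})\equiv H\}}+o(1)$.

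It remains to match the good set with the index set $C_n:=\bigcup_{k=0}^{\lfloor n(1-q)\rfloor}\big(\{\rv^k\}\cup\mt{\infty}(k)\big)$ of the second sum. Every vertex of $\mt{n}$ lies in a block $\rv^k\lv\BST$ with $0\le k\le\reco{n}-1$, so the symmetric difference between the good set and $C_n$ is contained in the union of the $\le\free{n}$ free spaces, the $o(n)$ bad vertices, and the vertices whose block index lies strictly between $\reco{n}$ and $\lfloor n(1-q)\rfloor$. For the last group, Lemma~\ref{lem:records} gives $|\reco{n}-\lfloor n(1-q)\rfloor|=o(n)$ almost surely, and since the block sizes $(|\mt{\infty}(k)|)_{k\ge0}$ are i.i.d.\ $\geom{1-q}$ (see~\cite[Lemma~1.10]{addario2021height}) the partial sums $\sum_{k\le M}(1+|\mt{\infty}(k)|)$ obey the strong law, so these blocks carry only $o(n)$ vertices. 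Hence $|\mathrm{good}\,\triangle\,C_n|=o(n)$ almost surely, and the two sums in the statement differ by at most $\tfrac1n|\mathrm{good}\,\triangle\,C_n|+\tfrac1n\cdot(\text{bad count})=o(1)$.

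I expect the main obstacle to be the first step: pinning down the precise condition $B_r(v,\mt{\infty})\subseteq\mt{n}$ under which the finite and infinite balls agree, via the distance-preservation property of prefix-closed subtrees. Once that is in place, the bounded-ball-size estimate reduces the whole argument to counting the vertices of $\mt{\infty}$ that are missing from $\mt{n}$ near $\mt{n}$, which is exactly what $\free{n}$ measures; the rest is bookkeeping to ensure all error terms vanish almost surely at once, for which Lemmas~\ref{lem:records} and~\ref{lem:spaces} together with the strong law for the block sizes are precisely what is required.
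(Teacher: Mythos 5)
Your proof is correct and follows essentially the same route as the paper's: work in the infinite Bernoulli coupling, control the discrepancy between $B_r(v,\mt{n})$ and $B_r(v,\mt{\infty})$ by a constant $C_r$ times $\free{n}$ (which is $o(n)$ by Lemma~\ref{lem:spaces}), and then adjust the range of block indices from $\reco{n}$ to $\lfloor n(1-q)\rfloor$ using Lemma~\ref{lem:records} together with the strong law for the $\geom{1-q}$ block sizes. The only differences are presentational — your good/bad vertex bookkeeping and explicit $+C_r$ term for vertices beyond $\rv^{\reco{n}-1}$ replace the paper's intermediate tree $\mt{\infty}^n$ and two-step substitution — and if anything your accounting of the vertices past the rightmost path is slightly more careful than the paper's.
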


\begin{proof}
    Recall the definition of $\reco{n}$ from~\eqref{eq:records} and $\free{n}$ from~\eqref{eq:spaces}, leading to~\eqref{eq:bound left}:
    \begin{align*}
        \Big|\Big\{0\leq k\leq\reco{n}:\bst{\mp{\infty}}\cap\rv^k\leftT\neq\bst{\mp{n}}\cap\rv^k\leftT\Big\}\Big|\leq\free{n}\,.
    \end{align*}
    Further let
    \begin{align*}
        \mt{\infty}^n=\mt{\infty}\cap\bigcup_{0\leq k\leq\reco{n}}\rv^k\leftT
    \end{align*}
    be the infinite tree $\mt{\infty}$ cropped at depth $\reco{n}$ on the rightmost path, or equivalently the tree $\mt{n}$ where we fill in the left subtrees as they appear in their limit $\mt{\infty}$.
    We recall here that, by definition, $\free{n}=|\mt{\infty}^n\setminus\mt{n}|$.
    
    For any binary search tree $T$ and $v\in T\cap\rv^k\leftT$, we see that $B_r(v,T)$ has size at most $3\cdot2^r$ and so we obtain
    \begin{align*}
        \left|\sum_{v\in\mt{n}}\mathbbm{1}_{\{B_r(v,\mt{n})\equiv H\}}-\sum_{v\in\mt{n}}\mathbbm{1}_{\{B_r(v,\mt{\infty})\equiv H\}}\right|\leq(3\cdot2^r)\free{n}\,.
    \end{align*}
    Using the definition of $\free{n}$, we also see that
    \begin{align*}
        &\sum_{v\in\mt{\infty}^n}\mathbbm{1}_{\{B_r(v,\mt{\infty})\equiv H\}}-\sum_{v\in\mt{n}}\mathbbm{1}_{\{B_r(v,\mt{\infty})\equiv H\}}=\sum_{v\in\mt{\infty}^n\setminus\mt{n}}\mathbbm{1}_{\{B_r(v,\mt{\infty})\equiv H\}}\leq\free{n}\,,
    \end{align*}
    Combining the previous results leads to
    \begin{align*}
        &\left|\frac{1}{n}\sum_{v\in\mt{n}}\mathbbm{1}_{\{B_r(v,\mt{n})\equiv H\}}-\frac{1}{n}\sum_{v\in\mt{\infty}^n}\mathbbm{1}_{\{B_r(v,\mt{\infty})\equiv H\}}\right|\\
        &\pskip\leq\left|\frac{1}{n}\sum_{v\in\mt{n}}\mathbbm{1}_{\{B_r(v,\mt{n})\equiv H\}}-\frac{1}{n}\sum_{v\in\mt{n}}\mathbbm{1}_{\{B_r(v,\mt{\infty})\equiv H\}}\right|+\frac{1}{n}\sum_{v\in\mt{\infty}^n\setminus\mt{n}}\mathbbm{1}_{\{B_r(v,\mt{\infty})\equiv H\}}\\
        &\pskip\leq\frac{(3\cdot2^r+1)\free{n}}{n}\,,
    \end{align*}
    But now, thanks to Lemma~\ref{lem:spaces} and since $r$ is fixed, this implies that
    \begin{align}\label{eq:Tn and Tinfty}
        \frac{1}{n}\sum_{v\in\mt{n}}\mathbbm{1}_{\{B_r(v,\mt{n})\equiv H\}}-\frac{1}{n}\sum_{v\in\mt{\infty}^n}\mathbbm{1}_{\{B_r(v,\mt{\infty})\equiv H\}}\asconv0\,.
    \end{align}
    Then, using the definition of $\mt{\infty}^n$, we observe that
    \begin{align*}
        \sum_{v\in\mt{\infty}^n}\mathbbm{1}_{\{B_r(v,\mt{\infty})\equiv H\}}=\sum_{k=0}^{\reco{n}}\sum_{v\in\{\rv^k\}\cup\mt{\infty}(k)}\mathbbm{1}_{\{B_r(v,\mt{\infty})\equiv H\}}\,.
    \end{align*}
    which is almost the desired result, except that we need to replace $\reco{n}$ with $n(1-q)$.

    \medskip
    
    Using Lemma~\ref{lem:records}, we know that, almost-surely, $|\reco{n}-\lfloor n(1-q)\rfloor|\leq\sqrt{n}\log n$ for $n$ large enough.
    Moreover, using the strong law of large number along with the fact that $(\mt{\infty}(k))_{k\geq0}$ are independent Mallows trees of geometric sizes, we have that
    \begin{align*}
        \frac{1}{2\sqrt{n}\log n}\sum_{k:|k-\lfloor n(1-q)\rfloor|\leq\sqrt{n}\log n}\Big(1+\big|\mt{\infty}(k)\big|\Big)\asconv\frac{1}{1-q}\,,
    \end{align*}
    where the limit is simply the expected value of a $\pgeom{1-q}$ random variable.
    Now, for $n$ large enough so that $|\reco{n}-\lfloor n(1-q)\rfloor|\leq\sqrt{n}\log n$ almost-surely, we have that
    \begin{align*}
        &\left|\frac{1}{n}\sum_{v\in\mt{\infty}^n}\mathbbm{1}_{\{B_r(v,\mt{\infty})\equiv H\}}-\frac{1}{n}\sum_{k=0}^{\lfloor n(1-q)\rfloor}\sum_{v\in\{\rv^k\}\cup\mt{\infty}(k)}\mathbbm{1}_{\{B_r(v,\mt{\infty})\equiv H\}}\right|\\
        &\pskip=\frac{1}{n}\left|\sum_{k=0}^{\reco{n}}\sum_{v\in\{\rv^k\}\cup\mt{\infty}(k)}\mathbbm{1}_{\{B_r(v,\mt{\infty})\equiv H\}}-\sum_{k=0}^{\lfloor n(1-q)\rfloor}\sum_{v\in\{\rv^k\}\cup\mt{\infty}(k)}\mathbbm{1}_{\{B_r(v,\mt{\infty})\equiv H\}}\right|\\
        &\pskip\leq\frac{2\sqrt{n}\log n}{n}\cdot\left[\frac{1}{2\sqrt{n}\log n}\sum_{k:|k-\lfloor n(1-q)\rfloor|\leq\sqrt{n}\log n}\Big(1+\big|\mt{\infty}(k)\big|\Big)\right]\,.
    \end{align*}
    If follows that
    \begin{align*}
        \frac{1}{n}\sum_{v\in\mt{\infty}^n}\mathbbm{1}_{\{B_r(v,\mt{\infty})\equiv H\}}-\frac{1}{n}\sum_{k=0}^{\lfloor n(1-q)\rfloor}\sum_{v\in\{\rv^k\}\cup\mt{\infty}(k)}\mathbbm{1}_{\{B_r(v,\mt{\infty})\equiv H\}}\asconv0\,,
    \end{align*}
    which combined with~\eqref{eq:Tn and Tinfty} provides us with the desired almost-sure convergence.
\end{proof}

The previous proposition provides an equivalent sequence of random variables to consider in order to prove that~\eqref{eq:target} holds.
We now state and prove the second proposition, showing the existence and a first characterization of the limit of the desired sequence of random variables.

\begin{prop}\label{prop:a limit}
    Using the notations introduced at the beginning of Section~\ref{sec:local limit}, we have that
    \begin{align*}
        \frac{1}{n}\sum_{k=0}^{\lfloor n(1-q)\rfloor}\sum_{v\in\{\rv^k\}\cup\mt{\infty}(k)}\mathbbm{1}_{\{B_r(v,\mt{\infty})\equiv H\}}\asconv(1-q)L\,,
    \end{align*}
    where
    \begin{align}\label{eq:def L}
        L=\expec\left[\sum_{v\in\{\rv^r\}\cup\mt{\infty}(r)}\mathbbm{1}_{\{B_r(v,\mt{\infty})\equiv H\}}\right]\,.
    \end{align}
\end{prop}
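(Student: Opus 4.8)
The plan is to turn the double sum into a single sum over the levels of the rightmost path and recognise it as an average of a stationary, finitely dependent sequence. For each $k\geq0$ set
\begin{align*}
    X_k=\sum_{v\in\{\rv^k\}\cup\mt{\infty}(k)}\mathbbm{1}_{\{B_r(v,\mt{\infty})\equiv H\}}\,,
\end{align*}
so the quantity to analyse is $\frac1n\sum_{k=0}^{\lfloor n(1-q)\rfloor}X_k$. Writing $M_n=\lfloor n(1-q)\rfloor$ and factoring as
\begin{align*}
    \frac1n\sum_{k=0}^{M_n}X_k=\frac{M_n+1}{n}\cdot\frac{1}{M_n+1}\sum_{k=0}^{M_n}X_k\,,
\end{align*}
where $\frac{M_n+1}{n}\to1-q$, it suffices to prove that $\frac{1}{M+1}\sum_{k=0}^{M}X_k\asconv L$ as the integer $M\to\infty$, since the deterministic sequence $M=M_n$ then inherits this almost-sure limit. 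As $0\leq X_k\leq1+|\mt{\infty}(k)|$ and $|\mt{\infty}(k)|\sim\geom{1-q}$, every $X_k$ is integrable, so the content lies in handling the dependence between the $X_k$ and identifying the mean.

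The key step is to pin down the dependence structure of $(X_k)_k$. For $v\in\{\rv^k\}\cup\mt{\infty}(k)$, every vertex of $B_r(v,\mt{\infty})$ lies within graph distance $r$ of $\rv^k$, hence on the spine segment $\rv^{k-r},\ldots,\rv^{k+r}$ or inside the hanging left subtrees $\mt{\infty}(k-r),\ldots,\mt{\infty}(k+r)$. Consequently, for every $k\geq r$ --- for which this window stays inside the admissible range and the farthest spine node reached only ever appears as a leaf of the ball --- there is a single deterministic function $f$, not depending on $k$, with
\begin{align*}
    X_k=f\big(\mt{\infty}(k-r),\ldots,\mt{\infty}(k+r)\big)\,.
\end{align*}
Since $(\mt{\infty}(j))_{j\geq0}$ is an i.i.d.\ sequence by~\cite[Lemma~1.10]{addario2021height}, this makes $(X_k)_{k\geq r}$ stationary with $\expec[X_k]=\expec[X_r]=L$, and $2r$-dependent: $X_k$ and $X_{k'}$ are independent whenever $|k-k'|>2r$, their defining windows of subtrees being then disjoint.

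I would conclude with a strong law for this finitely dependent stationary sequence, obtained by splitting into independent subsequences. The boundary terms $\sum_{k=0}^{r-1}X_k$ form a single almost-surely finite random variable, so after dividing by $M+1$ they vanish and it remains to treat $\frac{1}{M+1}\sum_{k=r}^{M}X_k$. Partition $\{r,r+1,\ldots\}$ into the $2r+1$ arithmetic progressions of common difference $2r+1$; along each progression consecutive indices differ by more than $2r$, so the corresponding $X_k$ are i.i.d.\ --- identically distributed by stationarity, independent by $2r$-dependence --- with common finite mean $L$, and Kolmogorov's strong law gives almost-sure convergence of each progression average to $L$. The full average is a convex combination of these $2r+1$ progression averages with weights tending to $\frac{1}{2r+1}$, so it too converges almost surely to $L$; multiplying by $\frac{M_n+1}{n}\to1-q$ gives the claim. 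The main obstacle is precisely this dependence: balls centred at nodes close to the spine reach into neighbouring left subtrees, so the $X_k$ are not independent. Once the uniform window bound establishing $2r$-dependence is in place, the remaining ingredients --- integrability, the $O(1)$ boundary terms, and the deterministic scaling factor --- are routine.
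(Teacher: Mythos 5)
Your proposal is correct and follows essentially the same route as the paper's proof: the same decomposition into per-level sums $X_k$, the same observation that $(X_k)_{k\geq r}$ is identically distributed with mean $L$ and has finite-range dependence because each ball of radius $r$ only sees the window $(\mt{\infty}(\ell))_{|\ell-k|\leq r}$, and the same strong law applied along arithmetic progressions of independent terms, with the first $r$ levels absorbed as an almost-surely finite boundary contribution. The only cosmetic difference is that you use $2r+1$ progressions of spacing $2r+1$, whereas the paper uses spacing $2r$ after noting that the single intersecting pair of balls at lag exactly $2r$ meets only in the deterministic spine vertex, so independence already holds at $|k-k'|=2r$.
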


\begin{proof}
    Consider some $k\geq0$ and let $v\in\{\rv^k\}\cup\mt{\infty}(k)$.
    Since $B_r(v,\mt{\infty})$ is the ball of radius $r$ around $v$, it only depends on $(\mt{\infty}(\ell))_{|\ell-k|\leq r}$.
    Moreover, if $k\geq r$, then $(\mt{\infty}(\ell))_{|\ell-k|\geq r}$ is distributed as $(\mt{\infty}(\ell))_{0\leq\ell\leq 2r}$  and thus, the family of random variables
    \begin{align}\label{eq:weird family}
        \left(\sum_{v\in\{\rv^k\}\cup\mt{\infty}(k)}\mathbbm{1}_{\{B_r(v,\mt{\infty})\equiv H\}}\right)_{k\geq r}
    \end{align}
    are identically distributed and their expectation is given by $L$ as in~\eqref{eq:def L}.
    They are however not independent (since $r\geq1$), but this is actually only a minor problem, since their dependency only has finite range.
    Indeed, for any $k$ and $k'$ such that $|k-k'|\geq2r$, the random variables of~\eqref{eq:weird family} with indices $k$ and $k'$ are independent.
    Note that, in the case $|k-k'|=2r$, of all the pairs of balls of radius $r$ considered in the two random variables, exactly one of them intersects, when $v=\rv^k$ and $v'=\rv^{k'}$, and the intersection is $B_r(\rv^k,\mt{\infty})\cap B_r(\rv^{k'},\mt{\infty})=\{\rv^{(k+k')/2}\}$;
    however, since this intersection is deterministic, the two random variables are still independent.
    
    Consider now any $r\leq k<3r$.
    Thanks to the previous observation of independence and identical distribution, by applying the strong law of large numbers, we know that
    \begin{align*}
        \frac{1}{1+\left\lfloor\frac{n(1-q)-k}{2r}\right\rfloor}\sum_{\{i:k+2ri\leq n(1-q)\}}\sum_{\{v\in\{\rv^{k+2ri}\}\cup\mt{\infty}(k+2ri)\}}\mathbbm{1}_{\{B_r(v,\mt{\infty})\equiv H\}}\asconv L\,,
    \end{align*}
    Since this is true for any $r\leq k<3r$ and $r$ is fixed and finite, it follows that the sum of all the previous random variables converges almost-surely to $2rL$.
    Using that $\frac{1}{1+\left\lfloor\frac{n(1-q)-k}{2r}\right\rfloor}\sim\frac{2r}{n(1-q)}$, this implies that
    \begin{align*}
        \sum_{k=r}^{3r-1}\frac{2r}{n(1-q)}\sum_{\{i:k+2ri\leq n(1-q)\}}\sum_{\{v\in\{\rv^{k+2ri}\}\cup\mt{\infty}(k+2ri)\}}\mathbbm{1}_{\{B_r(v,\mt{\infty})\equiv H\}}\asconv2rL
    \end{align*}
    and so
    \begin{align*}
        \frac{1}{n}\sum_{k=r}^{\lfloor n(1-q)\rfloor}\sum_{v\in\{\rv^k\}\cup\mt{\infty}(k)}\mathbbm{1}_{\{B_r(v,\mt{\infty})\equiv H\}}\asconv(1-q)L\,.
    \end{align*}
    To conclude the proof of the proposition, use that the first $r$ subtrees of $\mt{\infty}$ are almost-surely finite to extend the previous sum to $0\leq k\leq n(1-q)$ and obtain the desired almost-sure convergence.
\end{proof}

With this second proposition, we obtain that the equivalent sequence of random variables from Proposition~\ref{prop:new target} admits a limit.
It now only remains to prove that the limit is as desired.
Before doing so, we provide a lemma which will be useful in proving the final proposition.

\begin{lemma}\label{lem:size biased}
    Let $(S_i)_{i\geq0}$ be a sequence of disjoint sets such that $(|S_i|)_{i\geq0}$ is a sequence of independent random variables distributed according to $\pi=(\pi_\ell)_{\ell\geq1}$.
    Write $\mu=\sum_{\ell\geq1}\ell\pi_\ell$ for the mean of $\pi$.
    Fix an integer $r\geq1$ and a sequence of integers $(k_n)_{n\in\bN}$ such that $k_n\rightarrow\infty$ as $n$ goes to infinity.
    Finally, let $U_n$ be a uniform random variable on the set $\bigcup_{i=0}^{k_n}S_i$.
    Then, for any $\ell_0,\ldots,\ell_r$, we have
    \begin{align*}
        \lim_{n\rightarrow\infty}\prob\Big(|S_i|=\ell_i,\forall i\leq r~\Big|~U_n\in S_0\Big)=\frac{\ell_0\pi_{\ell_0}}{\mu}\prod_{i=1}^r\pi_{\ell_i}\,.
    \end{align*}
    In words, conditionally given that $U_n\in S_0$, the random variables $(S_0,\ldots,S_r)$ are independent with $S_0$ being distributed as the size-biased version of $\pi$ and the other $S_i$ being distributed according to $\pi$.
\end{lemma}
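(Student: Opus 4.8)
The plan is to compute the conditional probability directly by conditioning on the whole vector of sizes $(|S_i|)_{i\geq0}$, using that, given the sizes, $U_n$ falls in $S_0$ with probability $|S_0|\big/\sum_{i=0}^{k_n}|S_i|$. The size-biasing of $S_0$ will then appear through the factor $|S_0|$, while the independence of the remaining coordinates and the exact normalisation will follow from a law-of-large-numbers estimate for the total mass.

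First I would pin down the denominator. Since the sizes $(|S_i|)_{0\leq i\leq k_n}$ are i.i.d., the quantities $\expec\!\left[|S_i|\big/\sum_j|S_j|\right]$ are all equal, and they sum (deterministically, before taking expectations) to $1$; hence by symmetry
\begin{align*}
    \prob\big(U_n\in S_0\big)=\frac{1}{k_n+1}\,.
\end{align*}
Next I would compute the numerator. Writing $a=\sum_{i=0}^r\ell_i$ and $B_n=\sum_{i=r+1}^{k_n}|S_i|$, conditioning on the sizes gives
\begin{align*}
    \prob\big(|S_i|=\ell_i,\forall i\leq r,\ U_n\in S_0\big)=\expec\left[\mathbbm{1}_{\{|S_i|=\ell_i,\forall i\leq r\}}\cdot\frac{\ell_0}{a+B_n}\right]\,.
\end{align*}
The indicator depends only on $(|S_i|)_{i\leq r}$ while $B_n$ depends only on $(|S_i|)_{i>r}$, so independence factorises the right-hand side as $\ell_0\big(\prod_{i=0}^r\pi_{\ell_i}\big)\expec[1/(a+B_n)]$. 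Dividing by $\prob(U_n\in S_0)$, the conditional probability of interest equals $\ell_0\big(\prod_{i=0}^r\pi_{\ell_i}\big)\cdot(k_n+1)\expec[1/(a+B_n)]$.

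The crux is then to show that $(k_n+1)\expec[1/(a+B_n)]\to1/\mu$. Since $B_n$ is a sum of $k_n-r$ i.i.d.\ copies of $\pi$ and $k_n\to\infty$, the strong law of large numbers gives $B_n/k_n\to\mu$ almost surely, whence $(k_n+1)/(a+B_n)\to1/\mu$ almost surely. To transfer this to the expectation I would dominate: because $\pi$ is supported on $\{1,2,\ldots\}$ we have $|S_i|\geq1$, so $B_n\geq k_n-r$ and therefore $(k_n+1)/(a+B_n)\leq(k_n+1)/(a+k_n-r)$, which is bounded for $n$ large; bounded convergence then yields the claimed limit. Combining the three displays gives
\begin{align*}
    \lim_{n\to\infty}\prob\big(|S_i|=\ell_i,\forall i\leq r~\big|~U_n\in S_0\big)=\frac{\ell_0\pi_{\ell_0}}{\mu}\prod_{i=1}^r\pi_{\ell_i}\,.
\end{align*}

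The main obstacle is precisely this interchange of limit and expectation in the final step; the uniform domination provided by $|S_i|\geq1$ is what makes it clean, and it implicitly requires $\mu<\infty$ (harmless in the application, where $\pi=\pgeom{1-q}$ has mean $1/(1-q)$).
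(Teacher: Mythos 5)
Your proposal is correct and follows essentially the same route as the paper: both establish $\prob(U_n\in S_0)=1/(k_n+1)$ by exchangeability, reduce the conditional probability to $\ell_0\bigl(\prod_{i=0}^r\pi_{\ell_i}\bigr)(k_n+1)\expec\bigl[1/(a+B_n)\bigr]$ by conditioning on the sizes and using independence, and conclude via the strong law of large numbers together with dominated convergence justified by $|S_i|\geq1$. The only cosmetic difference is that the paper phrases the reduction through Bayes' rule rather than computing the joint probability and dividing.
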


\begin{proof}
    Using the rules of conditional probability, we know that
    \begin{align*}
        \prob\Big(|S_i|=\ell_i,\forall i\leq r~\Big|~U_n\in S_0\Big)&=\frac{\prob\Big(|S_i|=\ell_i,\forall i\leq r\Big)}{\prob\Big(U_n\in S_0\Big)}\prob\Big(U_n\in S_0~\Big|~|S_i|=\ell_i,\forall i\leq r\Big)\,.
    \end{align*}
    Since the $(S_i)_{0\leq i\leq k_n}$ are translation invariant, we know that
    \begin{align*}
        \prob\Big(U_n\in S_0\Big)=\frac{1}{k_n+1}\,,
    \end{align*}
    which implies that
    \begin{align*}
        \prob\Big(|S_i|=\ell_i,\forall i\leq r~\Big|~U_n\in S_0\Big)&=\left((k_n+1)\prod_{i=0}^r\pi_{\ell_i}\right)\cdot\prob\Big(U_n\in S_0~\Big|~|S_i|=\ell_i,\forall i\leq r\Big)\,.
    \end{align*}
    Now, conditioning on the values of the other $S_i$ (assuming that $n$ is large enough so that $k_n\geq r$), we have that
    \begin{align*}
        \prob\Big(U_n\in S_0~\Big|~|S_i|=\ell_i,\forall i\leq r\Big)&=\expec\bigg[\prob\Big(U_n\in S_0~\Big|~|S_i|=\ell_i,\forall i\leq r,\big(|S_i|\big)_{r<i\leq k_n}\Big)\bigg]\\
        &=\expec\left[\frac{\ell_0}{\sum_{i=0}^r\ell_i+\sum_{i=r+1}^{k_n}|S_i|}\right]\,,
    \end{align*}
    which, once plugged back into the previous equality, leads to
    \begin{align*}
        \prob\Big(|S_i|=\ell_i,\forall i\leq r~\Big|~U_n\in S_0\Big)&=\ell_0\prod_{i=0}^r\pi_{\ell_i}\cdot\expec\left[\frac{k_n+1}{\sum_{i=0}^r\ell_i+\sum_{i=r+1}^{k_n}|S_i|}\right]\,.
    \end{align*}
    Finally, using the fact that $k_n\rightarrow\infty$ along with the strong law of large numbers and the dominated convergence theorem (using that $|S_i|\geq1$), we obtain that
    \begin{align*}
        \lim_{n\rightarrow\infty}\expec\left[\frac{k_n+1}{\sum_{i=0}^r\ell_i+\sum_{i=r+1}^{k_n}|S_i|}\right]=\frac{1}{\mu}\,,
    \end{align*}
    which provides the desired limit.
\end{proof}

We finally prove the third proposition used to prove Theorem~\ref{thm:conv}, which characterizes the limit obtained in Proposition~\ref{prop:a limit}.

\begin{prop}\label{prop:correct lim}
    Using the notations introduced at the beginning of the section and $L$ defined in~\eqref{eq:def L}, we have that
    \begin{align*}
        (1-q)L=\prob\big(B_r(\imt)\equiv H\big)\,.
    \end{align*}
\end{prop}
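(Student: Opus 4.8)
The plan is to realise both $(1-q)L$ and $\prob\big(B_r(\imt)\equiv H\big)$ as the same limit: the conditional probability that the $r$-ball around a uniformly chosen vertex of $\mt{\infty}$ is isomorphic to $H$, given that this vertex lies in a fixed block away from the root. Set $S_i=\{\rv^i\}\cup\mt{\infty}(i)$ for $i\ge0$; these sets are disjoint and, since $(|\mt{\infty}(i)|)_{i\ge0}$ are independent $\geom{1-q}$, the sizes $|S_i|=1+|\mt{\infty}(i)|$ are i.i.d.\ $\pgeom{1-q}$ with mean $\mu=1/(1-q)$. Fix a block index $i_0\ge r$ and a sequence $k_n\to\infty$, let $U_n$ be uniform on $\bigcup_{i=0}^{k_n}S_i$, and write $\Sigma_n=\sum_{i=0}^{k_n}|S_i|$. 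Setting $N_i=\big|\{v\in S_i:B_r(v,\mt{\infty})\equiv H\}\big|$, note that for $i\ge r$ the ball $B_r(v,\mt{\infty})$ of a vertex $v\in S_i$ depends only on the blocks $i-r,\dots,i+r$ (a spine node at distance $r$ is reached, but none of its further neighbours), so $N_{i_0}\disteq N_r$ with $\expec[N_r]=L$ by~\eqref{eq:def L}. Since $|S_i|\ge1$ forces $\Sigma_n\ge k_n+1$, the strong law of large numbers gives $(k_n+1)/\Sigma_n\to1/\mu$ almost surely; as $N_{i_0}(k_n+1)/\Sigma_n\to N_{i_0}/\mu$ almost surely and is dominated by the integrable $N_{i_0}\le|S_{i_0}|$, dominated convergence yields
\begin{align*}
    \prob\big(B_r(U_n,\mt{\infty})\equiv H\,\big|\,U_n\in S_{i_0}\big)=\frac{\expec\big[N_{i_0}(k_n+1)/\Sigma_n\big]}{\expec\big[|S_{i_0}|(k_n+1)/\Sigma_n\big]}\longrightarrow\frac{\expec[N_r]}{\expec[|S_0|]}=\frac{L}{\mu}=(1-q)L.
\end{align*}

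For the second evaluation I keep track, conditionally on $U_n\in S_{i_0}$, of the full local data: the blocks $(S_{i_0+j})_{|j|\le r}$ together with the position of $U_n$ inside $S_{i_0}$. By the argument of Lemma~\ref{lem:size biased}, applied with the conditioned block in the interior rather than at the boundary (its proof uses only $\expec[(k_n+1)/\Sigma_n]\to1/\mu$ and extends verbatim to two-sided neighbours), the sizes converge in law to the configuration in which $|S_{i_0}|$ is size-biased while the $2r$ neighbouring sizes stay independent $\pgeom{1-q}$; conditionally on the sizes each block is an independent Mallows tree and $U_n$ is uniform in $S_{i_0}$, neither of which is affected by the conditioning. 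A direct computation identifies the size-biased law of $|S_{i_0}|$, namely $\ell\mapsto\ell\,q^{\ell-1}(1-q)^2$ for $\ell\ge1$, with the law of $1+|Y_q(0)|$, that is $1+\sbgeom{1-q}$ from~\eqref{eq:all geoms}. Shifting indices by $i_0$, the local data therefore converges in distribution to $\big(\imroot,(Y_q(k))_{|k|\le r}\big)$, with $\imroot$ uniform on $\{\root\}\cup Y_q(0)=Y_q\cap\leftT$, exactly as in Theorem~\ref{thm:imt}. Since $B_r$ is a function of this data by~\eqref{eq:imt dist}, we get $\prob\big(B_r(U_n,\mt{\infty})\equiv H\mid U_n\in S_{i_0}\big)\to\prob\big(B_r(\imt)\equiv H\big)$, and comparison with the first evaluation gives $(1-q)L=\prob\big(B_r(\imt)\equiv H\big)$.

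The delicate step is this second evaluation: Lemma~\ref{lem:size biased} must be upgraded from block sizes to the full decorated, two-sided local structure; one must check that conditioning on $U_n\in S_{i_0}$ and resampling $U_n$ uniformly leave the conditional-on-size Mallows laws of the blocks intact; and the size-biased central block must be matched \emph{exactly} to the root block $\{\root\}\cup Y_q(0)$ of the redwood tree — the place where the identity between $1+\sbgeom{1-q}$ and the size-biased $\pgeom{1-q}$, i.e.\ \eqref{eq:geomsing}, enters. The remaining ingredients are the routine strong-law and dominated-convergence estimates used in the first evaluation.
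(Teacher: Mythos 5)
Your proposal is correct and follows essentially the same route as the paper: both identify $(1-q)L$ and $\prob(B_r(\imt)\equiv H)$ as the common limit of the conditional probability of seeing $H$ around a uniform vertex given that it falls in a fixed interior block, using the strong law plus dominated convergence for the first evaluation and Lemma~\ref{lem:size biased} (upgraded to the two-sided, structure-decorated setting, with the $1+\sbgeom{1-q}$ identification from~\eqref{eq:geomsing}) for the second. The only cosmetic difference is that you compute the conditional probability directly as a ratio of expectations over $\bigcup_{i\le k_n}S_i$, whereas the paper first passes through the unconditional probability over $\lastT$ normalized by $|\lastT|/n\to1$ and then invokes exchangeability; the ``delicate step'' you flag is present, and handled with the same level of rigour, in the paper's own proof.
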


\begin{proof}
    Using the definition of $L$ from~\eqref{eq:def L} and its relation with the family of random variables given in~\eqref{eq:weird family}, we have that
    \begin{align*}
        (1-q)L&=(1-q)\expec\left[\sum_{v\in\{\rv^r\}\cup\mt{\infty}(r)}\mathbbm{1}_{\{B_r(v,\mt{\infty})\equiv H\}}\right]\\
        &=(1-q)\expec\left[\frac{1}{\lfloor n(1-q)\rfloor+1-r}\sum_{k=r}^{\lfloor n(1-q)\rfloor}\sum_{v\in\{\rv^k\}\cup\mt{\infty}(k)}\mathbbm{1}_{\{B_r(v,\mt{\infty})\equiv H\}}\right]\,.
    \end{align*}
    For more clarity in the rest of this proof, let
    \begin{align*}
        \lastT=\bigcup_{k=r}^{\lfloor n(1-q)\rfloor}\{\rv^k\}\cup\mt{\infty}(k)
    \end{align*}
    be the tree which will help us characterize $(1-q)L$;
    in particular, $\lastT$ corresponds to $\mt{\infty}^n$ defined in the proof of Proposition~\ref{prop:new target} where we remove the first $r$ left subtrees.
    Then, combining the two previous equations, we have
    \begin{align*}
        (1-q)L&=(1-q)\expec\left[\frac{1}{\lfloor n(1-q)\rfloor+1-r}\sum_{v\in\lastT}\mathbbm{1}_{\{B_r(v,\mt{\infty})\equiv H\}}\right]\\
        &=\frac{n(1-q)}{\lfloor n(1-q)\rfloor+1-r}\expec\left[\frac{|\lastT|}{n}\cdot\frac{1}{|\lastT|}\sum_{v\in\lastT}\mathbbm{1}_{\{B_r(v,\mt{\infty})\equiv H\}}\right]\,.
    \end{align*}
    We note that the term in front of the expectation converges to $1$ as $n$ goes to infinity.
    Moreover, $|\lastT|$ is a sum of $\lfloor n(1-q)\rfloor-r+1$ independent $\pgeom{1-q}$ random variables and so it is tight and satisfies the strong law of large numbers: $|\lastT|/n\asconv1$.
    Finally, note that
    \begin{align*}
        \expec\left[\frac{1}{|\lastT|}\sum_{v\in\lastT}\mathbbm{1}_{\{B_r(v,\mt{\infty})\equiv H\}}\right]&=\prob\Big(B_r(V_n,\mt{\infty})\equiv H\Big)\,,
    \end{align*}
    where $V_n$ is a uniformly sampled vertex from $\lastT$.
    So now, if we manage the prove that the right-hand side of the previous equation admits a limit as $n$ goes to infinity, by the dominated convergence theorem we can conclude that
    \begin{align*}
        (1-q)L&=\lim_{n\rightarrow\infty}\prob\Big(B_r(V_n,\mt{\infty})\equiv H\Big)\,.
    \end{align*}
    We now show that the limit indeed exists and corresponds to the desired value.

    \medskip
    
    Using that the subtrees $(\mt{\infty}(k))_{k\geq0}$ are independent and identically distributed, we have
    \begin{align*}
        \prob\Big(B_r(V_n,\mt{\infty})\equiv H\Big)&=\sum_{k=r}^{\lfloor n(1-q)\rfloor}\prob\Big(B_r(V_n,\mt{\infty})\equiv H~\Big|~V_n\in\lastT\cap\rv^k\leftT\Big)\prob\Big(V_n\in\lastT\cap\rv^k\leftT)\Big)\\
        &=\prob\Big(B_r(V_n,\mt{\infty})\equiv H~\Big|~V_n\in\lastT\cap\rv^r\leftT\Big)\\
        &=\prob\Big(B_r(V_n,\mt{\infty})\equiv H~\Big|~V_n\in\{\rv^k\}\cup\mt{\infty}(\rv^r)\Big)\,.
    \end{align*}
    The event $V_n\in\{\rv^k\}\cup\mt{\infty}(\rv^r)$ only depends on the size of $|\mt{\infty}(\rv^r)|$ in comparison to the other left subtrees $(|\mt{\infty}(k)|)_{r<k\leq n(1-q)}$ and is independent of their structure as well as that of the other left subtrees of $\mt{\infty}$.
    Thus, Lemma~\ref{lem:size biased} tells us that, as we make $n$ tends to infinity, the distribution of $(|\mt{\infty}(k)|)_{0\leq k\leq 2r}$ conditionally given that $V_n\in\{\rv^k\}\cup\mt{\infty}(\rv^r)$ converges to a sequence of independent random variables, all distributed according to a $\geom{1-p}$, except for $|\mt{\infty}(r)|$, for which the formula applied with $\pi_\ell=q^{\ell-1}(1-q)$ and $\mu=1/(1-q)$ gives us
    \begin{align*}
        \prob\Big(|\mt{\infty}(r)|=\ell~\Big|~V_n\in\{\rv^k\}\cup\mt{\infty}(\rv^r)\Big)&\longrightarrow\frac{(\ell+1)q^\ell(1-q)}{1/(1-q)}=(\ell+1)q^\ell(1-q)\,;
    \end{align*}
    this is exactly the distribution of a $\sbgeom{1-q}$ as given in~\eqref{eq:all geoms}.
    
    Putting everything together, the distribution of the subtrees $(\mt{\infty}(k))_{0\leq k\leq 2r}$ conditionally given that $V_n\in\{\rv^k\}\cup\mt{\infty}(\rv^r)$ asymptotically converges in distribution to a set of $(2r+1)$ independent Mallows trees of geometric sizes, except for the $r$-th tree whose distribution is size-biased.
    But now, this is exactly the distribution of $(Y_q(k))_{|k|\leq r}$ (by shifting the indices), meaning that
    \begin{align*}
        \lim_{n\rightarrow\infty}\prob\Big(B_r(V_n,\mt{\infty})\equiv H\Big)=\prob\big(B_r(\imt)\equiv H\big)\,,
    \end{align*}
    where we used~\eqref{eq:imt dist} to prove that the right-hand side is as desired.
    This concludes the proof of the proposition.
\end{proof}

With the third proposition proven, we now combine all previous results to prove Theorem~\ref{thm:conv}.

\begin{proof}[Proof of Theorem~\ref{thm:conv}]
    We aim to prove that~\eqref{eq:target} holds, that is
    \begin{align*}
        \frac{1}{n}\sum_{v\in\mt{n}}\mathbbm{1}_{\{B_r(v,\mt{n})\equiv H\}}\asconv\prob\big(B_r(\imt)\equiv H\big)\,.
    \end{align*}
    To do so, we first combine Proposition~\ref{prop:a limit} with Proposition~\ref{prop:correct lim} to obtain that a modified sequence converges as desired:
    \begin{align*}
        \frac{1}{n}\sum_{k=0}^{\lfloor n(1-q)\rfloor}\sum_{v\in\{\rv^k\}\cup\mt{\infty}(k)}\mathbbm{1}_{\{B_r(v,\mt{\infty})\equiv H\}}\asconv\prob\big(B_r(\imt)\equiv H\big)\,.
    \end{align*}
    But now, thanks to Proposition~\ref{prop:new target}, we have that
    \begin{align*}
        \frac{1}{n}\sum_{v\in\mt{n}}\mathbbm{1}_{\{B_r(v,\mt{n})\equiv H\}}-\frac{1}{n}\sum_{k=0}^{\lfloor n(1-q)\rfloor}\sum_{v\in\{\rv^k\}\cup\mt{\infty}(k)}\mathbbm{1}_{\{B_r(v,\mt{\infty})\equiv H\}}\asconv0\,.
    \end{align*}
    The proof of the theorem simply follows from combining the two previous convergence results.
\end{proof}

\section{The other types of convergence}\label{sec:other conv}

After proving the local limit of Mallows trees in Section~\ref{sec:as conv}, we now focus on three other types of convergence and prove Theorem~\ref{thm:rooted}, \ref{thm:ghp}, and~\ref{thm:ssc} respectively in Section~\ref{sec:rooted}, \ref{sec:ghp}, and~\ref{sec:ssc} below.
For the rest of this section, and without loss of generality, we assume that $(\mp{n})_{n\in\bN\cup\{\infty\}}$ are coupled according to the infinite Bernoulli model defined in Section~\ref{sec:finite mallows}.

\subsection{Rooted limit of Mallows trees}\label{sec:rooted}

The aim of this section is to prove Theorem~\ref{thm:rooted}, which corresponds to the limit of Mallows trees according to the rooted topology defined in Section~\ref{sec:other local}.
This result straightforwardly follows from the coupling of the infinite Bernoulli model as defined in Section~\ref{sec:finite mallows}.

\begin{proof}[Proof of Theorem~\ref{thm:rooted}]
    From the definition of $\mp{\infty}$ as the limit of $(\mp{n})_{n\in\bN}$, we know that, for any $r\geq0$, the subtree of $\bst{\mp{n}}$ rooted at $\rv^r\lv$ converges to that of $\bst{\mp{\infty}}$.
    More precisely, the sequence $(\bst{\mp{n}}\cap\rv^r\lv\BST)_{n\in\bN}$ is increasing and satisfies
    \begin{align*}
        \bigcup_{n\in\bN}\Big(\bst{\mp{n}}\cap\rv^r\lv\BST\Big)=\bst{\mp{\infty}}\cap\rv^r\lv\BST\,,
    \end{align*}
    which also implies that
    \begin{align*}
        \bigcup_{n\in\bN}\Big(\bst{\mp{n}}\cap\rv^r\leftT\Big)=\bst{\mp{\infty}}\cap\rv^r\leftT\,,
    \end{align*}
    Since $\bst{\mp{\infty}}\cap\rv^r\leftT$ is finite, there exists $N_0(r)<\infty$ such that, for all $n\geq N_0(r)$, we have
    \begin{align*}
        \bst{\mp{n}}\cap\rv^r\leftT=\bst{\mp{\infty}}\cap\rv^r\leftT\,.
    \end{align*}
    But now, by letting $N(r)=\max\{N_0(k):0\leq k\leq r\}$, we have that $N(r)<\infty$ almost-surely, and for all $n\geq N(r)$
    \begin{align*}
        \bigcup_{k=0}^r\bst{\mp{n}}\cap\rv^k\leftT=\bigcup_{k=0}^r\bst{\mp{\infty}}\cap\rv^k\leftT\,.
    \end{align*}
    In particular, for any $n\geq N(r)$, we also have
    \begin{align*}
        B_r\big(\root,\bst{\mp{n}}\big)=B_r\big(\root,\bst{\mp{\infty}}\big)
    \end{align*}
    and so, for any rooted tree $t$, it follows that
    \begin{align*}
        \lim_{n\rightarrow\infty}\prob\Big(B_r\big(\root,\bst{\mp{n}}\big)\equiv t\Big)=\prob\Big(B_r\big(\root,\bst{\mp{\infty}}\big)\equiv t\Big)\,,
    \end{align*}
    which is exactly the definition of the rooted convergence.
\end{proof}

It is interesting to note that, in the case of the aforementioned coupling, we actually have that
\begin{align*}
    \mathbbm{1}_{\{B_r(\root,\bst{\mp{n}})\equiv t\}}\asconv\mathbbm{1}_{\{B_r(\root,\bst{\mp{\infty}})\equiv t\}}\,,
\end{align*}
which could be seen as an \textit{almost-sure rooted convergence}.
However, the almost-sure rooted convergence simply means that, for $n$ large enough, we almost-surely have that
\begin{align*}
    B_r(\root,\bst{\mp{n}})=B_r(\root,\bst{\mp{\infty}})\,,
\end{align*}
which was the case when considering the coupling of the infinite Bernoulli model but is not necessarily an interesting topology in general.

\subsection{Strong Gromov-Hausdorff-Prokhorov limit of Mallows trees}\label{sec:ghp}

The aim of this section is to prove Theorem~\ref{thm:ghp}, which corresponds to the limit of Mallows trees according to the strong Gromov-Hausdorff-Prokhorov (GHP) topology.
We start by providing a quick definition of this topology (we refer to~\cite[Section~6]{miermont2009tessellations} for further details), which we then use to provide a simpler condition sufficient to prove the limit from Theorem~\ref{thm:ghp}.
We conclude this section with the proof of the theorem.

\medskip

A \textit{weighted metric space} is a triplet $(\metric,d,\mu)$ where $(\metric,d)$ is a compact metric space and $(\metric,\mu)$ is a probability space.
For a pair of weighted metric spaces $\metricspace_1=(\metric_1,d_1,\mu_1)$ and $\metricspace_2=(\metric_2,d_2,\mu_2)$, we define the GHP distance as follows.
First, a \textit{correspondence} between $\metric_1$ and $\metric_2$ is a set $C\subseteq\metric_1\times\metric_2$ such that $\{x_1:\exists(x_1,x_2)\in C\}=\metric_1$ and $\{x_2:\exists(x_1,x_2)\in C\}=\metric_2$ and the \textit{distortion} of $C$ is defined by
\begin{align*}
    \disto(C):=\sup\Big\{\big|d_1(x_1,x'_1)-d_2(x_2,x'_2)\big|:(x_1,x_2),(x'_1,x'_2)\in C\Big\}\,.
\end{align*}
Second, a coupling between $\mu_1$ and $\mu_2$ is a probability measure $M$ on $\metric_1\times\metric_2$ such that the marginal measures of $M$ on $\metric_1$ and $\metric_2$ are respectively $\mu_1$ and $\mu_2$.
Now, the GHP distance on the pair $\metricspace_1$ and $\metricspace_2$ is defined by
\begin{align*}
    \dghp(\metricspace_1,\metricspace_2):=\inf\Big\{\epsilon>0:\exists(C,M),M(C)\geq1-\epsilon,\disto(C)\leq2\epsilon\Big\}\,,
\end{align*}
where the pair $(C,M)$ is composed of a correspondence $C$ between $\metric_1$ and $\metric_2$ and coupling $M$ between $\mu_1$ and $\mu_2$.

While to transform $\dghp$ into a metric on a proper complete separable metric space we need to consider weighted metric spaces up to isomorphism, the definition of $\dghp$ on any pair of weighted metric space is enough for the sake of this study.
Thus, given a sequence of random weighted metric spaces $(\metricspace_n)_{n\in\bN}$ and a metric space $\metricspace$, we say that it converges according to the strong GHP topology, and write it $\metricspace_n\ghpconv\metricspace$ if
\begin{align*}
    \dghp(\metricspace_n,\metricspace)\asconv0\,.
\end{align*}
We call this convergence \textit{strong} in comparison to a weaker one where the convergence would occur in probability (or equivalently in distribution, since the limit is constant).
We now provide a useful condition to check whether $(\metricspace_n)_{n\in\bN}$ converges to the limit in Theorem~\ref{thm:ghp}, that is $\ghplimit=([0,1],d,\mu_{[0,1]})$ where $d(x,y)=|x-y|$ and $\mu_{[0,1]}$ is the uniform measure on $[0,1]$.

\begin{lemma}\label{lem:ghp cond}
    Let $(\metricspace_n)_{n\in\bN}=((\metric_n,d_n,\mu_n))_{n\in\bN}$ be a sequence of metric spaces such that, for all $n\in\bN$, $\metric_n$ is a finite set of size $n$ and $\mu_n$ is the uniform measure on $\metric_n$.
    Assume that, for all $n\in\bN$, there exists an ordering of the elements of $\metric_n=\{x_{i,n}:1\leq i\leq n\}$ such that
    \begin{align}\label{eq:ghp cond}
        \max\left\{\left|d_n(x_{i,n},x_{j,n})-\frac{|i-j|}{n}\right|:1\leq i,j\leq n\right\}\asconv0\,.
    \end{align}
    Then we have the convergence according to the strong GHP topology
    \begin{align*}
        \metricspace_n\ghpconv\ghplimit
    \end{align*}
\end{lemma}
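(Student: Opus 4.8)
The plan is to exhibit, for each $n$, an explicit pair $(C_n,M_n)$ consisting of a correspondence between $\metric_n$ and $[0,1]$ and a coupling between $\mu_n$ and $\mu_{[0,1]}$ that witnesses a small value of $\dghp(\metricspace_n,\ghplimit)$, and then to let $n\to\infty$. First I would use the ordering $\{x_{i,n}:1\le i\le n\}$ supplied by the hypothesis to partition $[0,1]$ into the $n$ consecutive intervals $I_i=[(i-1)/n,i/n]$, and define the correspondence $C_n=\{(x_{i,n},t):1\le i\le n,\ t\in I_i\}$. This is a genuine correspondence since projecting onto the first coordinate recovers all of $\metric_n$ and projecting onto the second recovers all of $[0,1]$, as the $I_i$ cover $[0,1]$. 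For the coupling I would take $M_n=\sum_{i=1}^n \delta_{x_{i,n}}\otimes(\mathrm{Leb}|_{I_i})$, sending the atom of mass $1/n$ at $x_{i,n}$ to the uniform measure on $I_i$; its first marginal assigns each $x_{i,n}$ mass $|I_i|=1/n=\mu_n(\{x_{i,n}\})$, its second marginal is Lebesgue measure $\mu_{[0,1]}$ on $[0,1]$, and it is supported entirely on $C_n$, so that $M_n(C_n)=1$.

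Next I would bound the distortion of $C_n$. Writing $\delta_n$ for the maximum appearing in~\eqref{eq:ghp cond}, and taking two pairs $(x_{i,n},s),(x_{j,n},t)\in C_n$ with $s\in I_i$ and $t\in I_j$, one has $\big||s-t|-|i-j|/n\big|\le 1/n$ because $s$ and $t$ lie within distance $1/n$ of the grid points $i/n$ and $j/n$. Combined with the hypothesis and the triangle inequality this gives
\begin{align*}
    \big|d_n(x_{i,n},x_{j,n})-|s-t|\big|
    \le \left|d_n(x_{i,n},x_{j,n})-\frac{|i-j|}{n}\right|+\frac{1}{n}
    \le \delta_n+\frac{1}{n}\,,
\end{align*}
and taking the supremum over all such pairs yields $\disto(C_n)\le \delta_n+1/n$.

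Finally I would assemble the pieces through the definition of $\dghp$. Since $M_n(C_n)=1\ge 1-\epsilon$ for every $\epsilon>0$ and $\disto(C_n)\le \delta_n+1/n$, the pair $(C_n,M_n)$ is admissible for any $\epsilon\ge(\delta_n+1/n)/2$, whence $\dghp(\metricspace_n,\ghplimit)\le(\delta_n+1/n)/2$. As $\delta_n\asconv0$ by hypothesis and $1/n\to0$ deterministically, the right-hand side converges almost surely to $0$, which is exactly the statement $\metricspace_n\ghpconv\ghplimit$.

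I do not expect a genuine obstacle here, since all the probabilistic content has been absorbed into hypothesis~\eqref{eq:ghp cond}; the only points demanding care are verifying that $M_n$ really has the correct two marginals, so that it is a legitimate coupling of $\mu_n$ and $\mu_{[0,1]}$, and correctly tracking the factor of $2$ relating $\disto(C_n)$ to the admissible $\epsilon$ in the definition of $\dghp$.
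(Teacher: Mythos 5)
Your proposal is correct and follows essentially the same route as the paper: the same interval correspondence $C_n$ pairing $x_{i,n}$ with $[(i-1)/n,i/n]$, the same coupling $M_n$ (uniform on $C_n$, so $M_n(C_n)=1$), and the same distortion bound via hypothesis~\eqref{eq:ghp cond}. Your bound $\disto(C_n)\le\delta_n+1/n$ is in fact marginally sharper than the paper's $\delta_n+2/n$, but this makes no difference to the conclusion.
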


\begin{proof}
    Using the definition of the strong GHP convergence, we see that it suffices to find a sequence $(C_n)_{n\in\bN}$ of correspondences and a sequence $(M_n)_{n\geq1}$ of matching such that, $C_n$ is a correspondence between $\metric_n$ and $[0,1]$ satisfying
    \begin{align*}
        \disto(C_n)\asconv0
    \end{align*}
    and $M_n$ is a matching between $\mu_n$ and $\mu_{[0,1]}$ satisfying
    \begin{align*}
        M_n(C_n)\asconv1\,.
    \end{align*}
    To do so we rely on the ordering of $\metric_n$ provided as assumption of the lemma.

    \medskip

    Consider first the correspondence defined by
    \begin{align*}
        C_n=\left\{\left(x_{i,n},\left[\frac{i-1}{n},\frac{i}{n}\right]\right):1\leq i\leq n\right\}\,.
    \end{align*}
    It is indeed a correspondence between $\metric_n$ and $[0,1]$ since $\metric_n=\{x_{i,n}:1\leq i\leq n\}$ and $[0,1]=\bigcup_{i=1}^n[(i-1)/n,i/n]$.
    Now, $C_n$ is a union of $n$ disjoint segments of $\metric_n\times[0,1]$, so choose $M_n$ to be the uniform probability measure on $C_n$.
    By the definition of $C_n$, $M_n$ is indeed a matching between $\mu_n$ and $\mu_{[0,1]}$ since $M_n(\{x_{i,n}\}\times[0,1])=M_n(\{x_{i,n}\times[(i-1)/n,i/n]\})=\frac{1}{n}$ and, for any Lebesgue measurable set $A$ of $[0,1]$,
    \begin{align*}
        M_n(\metric_n\times A)&=\sum_{i=1}^nM_n(\{x_{i,n}\}\times(A\cap[(i-1)/n,i/n]))\\
        &=\sum_{i=1}^n\mu_{[0,1]}\big(A\cap[(i-1)/n,i/n]\big)\\
        &=\mu_{[0,1]}(A)\,,
    \end{align*}
    where the last equality uses that the union of the singletons $\{1/n,2/n,\ldots,(n-1)/n\}$ has $\mu_{[0,1]}$-measure $0$.
    This proves that $(C_n)_{n\in\bN}$ and $(M_n)_{n\in\bN}$ are correspondences and matching as desired.
    Moreover, by definition $M_n(C_n)=1$, so it only remains to prove that $\disto(C_n)\asconv0$.
    This directly follows from the assumption of the lemma since
    \begin{align*}
        \disto(C_n)&=\sup\left\{\left|d_n(x_{i,n},x_{j,n})-\frac{|x-y|}{n}\right|:1\leq i,j\leq n,i-1\leq x\leq i,j-1\leq y\leq j\right\}\\
        &\leq\frac{2}{n}+\max\left\{\left|d_n(x_{i,n},x_{j,n})-\frac{|i-j|}{n}\right|:1\leq i,j\leq n\right\}\,.
    \end{align*}
    The right-hand side of this equation converges almost-surely to $0$ and so $\disto(C_n)\asconv0$, proving that $\dghp(\metricspace_n,\ghplimit)\asconv0$ as desired.
\end{proof}

We now use the condition stated in this lemma to prove the convergence result from Theorem~\ref{thm:ghp}.

\begin{proof}[Proof of Theorem~\ref{thm:ghp}]
    In order to prove that the theorem holds, we simply show that $(\bst{\mp{n}},((1-q)n)^{-1}\cdot d_n,\mu_n)$ satisfies the conditions of Lemma~\ref{lem:ghp cond}.
    First of all, note that $\bst{\mp{n}}$ is indeed a set of size $n$ and that $\mu_n$ is, by definition, the uniform measure on this set.
    It thus only remains to prove that there exists an ordering of the elements of $\bst{\mp{n}}$ satisfying~\eqref{eq:ghp cond}.

    Fix $n\in\bN$ for now and organize $\bst{\mp{n}}$ into its sequence of left subtrees $T_{0,n},\ldots,T_{r_n,n}$.
    More precisely, we have $r_n=\reco{n}$ as defined in~\eqref{eq:records} and
    \begin{align*}
        T_{k,n}=\bst{\mp{n}}\cap\rv^k\leftT\subseteq\{\rv^k\}\cup\Big(\bst{\mp{\infty}}\cap\rv^k\lv\BST\Big)\,.
    \end{align*}
    Now, for any $0\leq k\leq r$, let $x^k=(x^{i,k}:1\leq i\leq|T_{k,n}|)$ be an arbitrary ordering of the set $T_{k,n}$ and define $(x_{i,n})_{1\leq i\leq n}$ as the concatenation of the $(x^k)_{0\leq k\leq r_n}$, that is 
    \begin{align*}
        (x_{1,n},\ldots,x_{n,n})=\left(x^{0,1},\ldots,x^{0,|T_{0,n}|},x^{1,1},\ldots,x^{r_n,|T_{r_n,n}|}\right)\,.
    \end{align*}
    In words, the sequence $(x_{i,n})_{1\leq i\leq n}$ goes in order through the left subtrees of $\bst{\mp{n}}$ and explore each such left subtree in an arbitrary order.
    We now prove that this sequence satisfies~\eqref{eq:ghp cond}.

    \medskip

    Consider $1\leq i,j\leq n$.
    Note that $x_{i,n}$ belongs to the left subtree $T_{k,n}$ where $k=k_n(i)$ is the minimal integer satisfying
    \begin{align*}
        i\leq\big|T_{0,n}\big|+\ldots+\big|T_{k,n}\big|\,.
    \end{align*}
    Similarly, $x_{j,n}$ belongs to the left subtree $T_{\ell,n}$ where $\ell=k_n(j)$ is minimal so that
    \begin{align*}
        j\leq\big|T_{0,n}\big|+\ldots+\big|T_{\ell,n}\big|\,.
    \end{align*}
    Since the minimal distance between two vertices of $T_{k,n}$ and $T_{\ell,n}$ is attained when considering $\rv^k$ and $\rv^\ell$, whose distance is $|k-\ell|$, we have that
    \begin{align*}
        |k-\ell|\leq d_n(x_{i,n},x_{j,n})\leq|k-\ell|+|T_{k,n}|+|T_{\ell,n}|\,.
    \end{align*}
    By taking the maximum over all the left subtree sizes, we obtain that
    \begin{align}\label{eq:new cond}
        \max\left\{\left|\frac{d_n(x_{i,n},x_{j,n})}{(1-q)n}-\frac{|k_n(i)-k_n(j)|}{(1-q)n}\right|:1\leq i,j\leq n\right\}\leq\frac{2}{(1-q)n}\max\Big\{|T_{k,n}|:0\leq k\leq r_n\Big\}\,.
    \end{align}
    We now show that the right-hand side of this equation converges to $0$ almost-surely.

    Using the definition of $T_{k,n}$ and the coupling of the infinite Bernoulli model, we have that
    \begin{align*}
        T_{k,n}=\bst{\mp{n}}\cap\rv^k\leftT\subseteq\{\rv^k\}\cup\Big(\bst{\mp{\infty}}\cap\rv^k\lv\BST\Big)\,.
    \end{align*}
    We also recall from~\cite[Lemma~1.10]{addario2021height} that the family of random variables $(|\bst{\mp{\infty}}\cap\rv^k\lv\BST|)_{k\geq0}$ is a sequence of independent $\geom{1-q}$.
    Moreover, using that $\reco{n}\leq n$, it follows that
    \begin{align*}
        \max\Big\{|T_{k,r}|:0\leq k\leq r_n\Big\}\leq1+\max\Big\{\big|\bst{\mp{\infty}}\cap\rv^k\lv\BST\big|:0\leq k\leq n\Big\}\,.
    \end{align*}
    Thus, for any $\epsilon>0$, we have
    \begin{align*}
        &\prob\left(\frac{2}{(1-q)n}\max\Big\{|T_{k,r}|:0\leq k\leq r_n\Big\}>\epsilon\right)\\
        &\pskip\leq\prob\left(\max\Big\{\big|\bst{\mp{\infty}}\cap\rv^k\lv\BST\big|:0\leq k\leq n\Big\}\geq\frac{\epsilon(1-q)n}{2}-1\right)\\
        &\pskip=1-\left(1-q^{\left\lceil\frac{\epsilon(1-q)n}{2}-1\right\rceil}\right)^{n+1}\\
        &\pskip=\big(1+o(1)\big)nq^\frac{\epsilon(1-q)n}{2}\,,
    \end{align*}
    and the almost-sure convergences follows from applying the Borel-Cantelli lemma.
    This proves that the right-hand side of~\eqref{eq:new cond} converges alsmot-surely to $0$ and so
    \begin{align}\label{eq:first conv}
        \max\left\{\left|\frac{d_n(x_{i,n},x_{j,n})}{(1-q)n}-\frac{|k_n(i)-k_n(j)|}{(1-q)n}\right|:1\leq i,j\leq n\right\}\asconv0
    \end{align}
    This is almost exactly~\eqref{eq:ghp cond} and we now prove that $|k_n(i)-\ell_n(j)|/(1-q)$ can be replaced with $|i-j|$ by proving that
    \begin{align}\label{eq:second conv}
        \max\left\{\left|\frac{i}{n}-\frac{k_n(i)}{(1-q)n}\right|:1\leq i\leq n\right\}\asconv0\,.
    \end{align}

    \medskip

    Recall the definition of $k=k_n(i)$ as the unique integer such that
    \begin{align*}
        \big|T_{0,n}\big|+\ldots+\big|T_{k-1,n}\big|<i\leq\big|T_{0,n}\big|+\ldots+\big|T_{k,n}\big|\,.
    \end{align*}
    First, using that all the left subtrees have size at least $1$, we see that $k\leq i$.
    Let now $(a_n)_{n\in\bN}$ be a sequence of integers such that $a_n/n\rightarrow0$ but $a_n/\log n\rightarrow\infty$ (for example $a_n=\lfloor\log^2n\rfloor$ or $a_n=\lfloor\sqrt{n}\rfloor$).
    Then, the error in~\eqref{eq:second conv} by removing the first $a_n$ term is negligible, as we observed that for $i\leq a_n$ we had $k\leq a_n$, implying
    \begin{align}\label{eq:clipping an}
        \left|\max\left\{\left|\frac{i}{n}-\frac{k_n(i)}{(1-q)n}\right|:1\leq i\leq n\right\}-\max\left\{\left|\frac{i}{n}-\frac{k_n(i)}{(1-q)n}\right|:a_n\leq i\leq n\right\}\right|\leq\frac{a_n}{n}+\frac{a_n}{(1-q)n}\,.
    \end{align}
    Thus, it suffices to prove that the convergence in~\eqref{eq:second conv} is true for $a_n\leq i\leq n$.

    Recall the definition of $\free{n}$ from~\eqref{eq:spaces}, to see that, for any $k\geq0$
    \begin{align*}
        \big|T_{0,n}\big|+\ldots+\big|T_{k-1,n}\big|\geq k+\sum_{\ell=0}^{k-1}\big|\bst{\mp{\infty}}\cap\rv^\ell\lv\BST\big|-\free{n}\,.
    \end{align*}
    Combining this inequality with the fact that $\bst{\mp{n}}$ is a subtree of $\bst{\mp{\infty}}$ and the definition of $k(i)$, we obtain
    \begin{align}\label{eq:bound k and i}
        k(i)+\sum_{\ell=0}^{k(i)-1}\big|\bst{\mp{\infty}}\cap\rv^\ell\lv\BST\big|-\free{n}<i\leq k(i)+1+\sum_{\ell=0}^{k(i)}\big|\bst{\mp{\infty}}\cap\rv^\ell\lv\BST\big|\,.
    \end{align}
    Now, using the fact that $(|\bst{\mp{\infty}}\cap\rv^k\lv\BST|)_{k\geq0}$ is a family of independent $\geom{1-q}$ random variables along with the strong law of large number implies that, as $k$ goes to infinity, we have
    \begin{align*}
        \frac{1}{k}\sum_{\ell=0}^k\big|\bst{\mp{\infty}}\cap\rv^\ell\lv\BST\big|\asconv\frac{q}{1-q}\,.
    \end{align*}
    Moreover, using the upper bound of~\eqref{eq:bound k and i} with $i=a_n$, we can verify that
    \begin{align*}
        \frac{a_n}{k_n(a_n)}\leq1+\frac{1}{k_n(a_n)}+\frac{1}{k_n(a_n)}\sum_{\ell=0}^{k_n(a_n)}\big|\bst{\mp{\infty}}\cap\rv^\ell\lv\BST\big|\,,
    \end{align*}
    and so $k_n(a_n)\asconv\infty$.
    Since $i\mapsto k_n(i)$ is increasing this implies that, uniformly over all $a_n\leq i\leq n$, we have
    \begin{align*}
        \frac{1}{k_n(i)}\sum_{\ell=0}^{k_n(i)}\big|\bst{\mp{\infty}}\cap\rv^\ell\lv\BST\big|\asconv\frac{q}{1-q}\,.
    \end{align*}
    Combining this result with the upper bound of~\eqref{eq:bound k and i}, it follows that, for $n$ large enough, for any $a_n\leq i\leq n$, we almost-surely have that
    \begin{align*}
        \frac{i}{k_n(i)}&\leq1+\frac{1}{k_n(i)}+\frac{1}{k_n(i)}\sum_{\ell=0}^{k_n(i)}\big|\bst{\mp{\infty}}\cap\rv^\ell\lv\BST\big|\\
        &\leq1+1+\frac{2q}{1-q}\\
        &=\frac{2}{1-q}\,,
    \end{align*}
    and so $k_n(i)\geq(1-q)i/2\geq(1-q)a_n/2$.

    Recall the bounds from~\eqref{eq:bound k and i} and use the previous convergence results related to $k_n(i)$ so see that
    \begin{align*}
        \left|\frac{i}{k_n(i)}-\frac{1}{1-q}\right|&\leq\left|\frac{1}{k_n(i)}\sum_{\ell=0}^{k_n(i)}\big|\bst{\mp{\infty}}\cap\rv^\ell\lv\BST\big|-\frac{q}{1-q}\right|\\
        &\pskip+\frac{1}{k_n(i)}+\frac{\big|\bst{\mp{\infty}}\cap\rv^{k_n(i)}\lv\BST\big|+\free{n}}{k_n(i)}\,.
    \end{align*}
    Thus, thanks to $k_n(i)\geq k_n(a_n)\geq(1-q)a_n/2$ almost surely for $n$ large enough, the fact that the left subtrees are almost surely finite, and the result from Lemma~\ref{lem:spaces}, satisfied here since $a_n/\log n\rightarrow\infty$, we eventually obtain that
    \begin{align*}
        \frac{i}{k_n(i)}\asconv\frac{1}{1-q}\,;
    \end{align*}
    moreover, this convergence is uniform over $a_n\leq i\leq n$, meaning that
    \begin{align*}
        \max\left\{\left|\frac{i}{k_n(i)}-\frac{1}{1-q}\right|:a_n\leq i\leq n\right\}\asconv0\,.
    \end{align*}
    From this convergence, we finally obtain that
    \begin{align}\label{eq:third conv}
        \max\left\{\left|\frac{i}{n}-\frac{k_n(i)}{(1-q)n}\right|:a_n\leq i\leq n\right\}&\leq\max\left\{\frac{k_n(i)}{n}\cdot\left|\frac{i}{k_n(i)}-\frac{1}{1-q}\right|:a_n\leq i\leq n\right\}\\
        &\leq\max\left\{\left|\frac{i}{k_n(i)}-\frac{1}{1-q}\right|:a_n\leq i\leq n\right\}\asconv0\,,\notag
    \end{align}
    where we use that $k_n(i)\leq i\leq n$ for the second inequality.
    We now have all the results to prove the theorem.

    \medskip

    First of all, thanks to~\eqref{eq:third conv}, we know that
    \begin{align*}
        \max\left\{\left|\frac{i}{n}-\frac{k_n(i)}{(1-q)n}\right|:a_n\leq i\leq n\right\}\asconv0\,.
    \end{align*}
    We further recall that $a_n/n\rightarrow\infty$ which, along with~\eqref{eq:clipping an}
    \begin{align*}
        \left|\max\left\{\left|\frac{i}{n}-\frac{k_n(i)}{(1-q)n}\right|:1\leq i\leq n\right\}-\max\left\{\left|\frac{i}{n}-\frac{k_n(i)}{(1-q)n}\right|:a_n\leq i\leq n\right\}\right|\leq\frac{a_n}{n}+\frac{a_n}{(1-q)n}\,,
    \end{align*}
    implies that~\eqref{eq:second conv} holds:
    \begin{align*}
        \max\left\{\left|\frac{i}{n}-\frac{k_n(i)}{(1-q)n}\right|:1\leq i\leq n\right\}\asconv0\,.
    \end{align*}
    Now, we also proved~\eqref{eq:first conv}, stating that
    \begin{align*}
        \max\left\{\left|\frac{d_n(x_{i,n},x_{j,n})}{(1-q)n}-\frac{|k_n(i)-k_n(j)|}{(1-q)n}\right|:1\leq i,j\leq n\right\}\asconv0\,.
    \end{align*}
    Moreover, for any $1\leq i,j\leq n$, we have
    \begin{align*}
        \left|\frac{d_n(x_{i,n},x_{j,n})}{(1-q)n}-\frac{|i-j|}{n}\right|\leq\left|\frac{d_n(x_{i,n},x_{j,n})}{(1-q)n}-\frac{|k_n(i)-k_n(j)|}{(1-q)n}\right|+\left|\frac{k_n(i)}{(1-q)n}-\frac{i}{n}\right|+\left|\frac{k_n(j)}{(1-q)n}-\frac{j}{n}\right|\,.
    \end{align*}
    Thus, combining the three previous equations, we see that
    \begin{align*}
        \max\left\{\left|\frac{d_n(x_{i,n},x_{j,n})}{(1-q)n}-\frac{|i-j|}{n}\right|:1\leq i,j\leq n\right\}\asconv0\,,
    \end{align*}
    which is exactly the condition~\eqref{eq:ghp cond} from Lemma~\ref{lem:ghp cond}.
    This finally prove that $(\bst{\mp{n}},((1-q)n)^{-1}\cdot d_n,\mu_n)$ satisfies all the conditions of Lemma~\ref{lem:ghp cond} and thus implies that Theorem~\ref{thm:ghp} holds.
\end{proof}

\subsection{Subtree size limit of Mallows trees}\label{sec:ssc}

The aim of this section is to prove Theorem~\ref{thm:ssc}, which corresponds to the limit of Mallows trees according to the subtree size topology, first introduced in~\cite{grubel2023note}.
We start by providing some background on this topology, before proving the desired limit.

First, we define the set of functions on $\BST$ with mass $1$ at the root, and whose mass split at every node:
\begin{align*}
    \Psi:=\Big\{\psi:\BST\rightarrow[0,1]~\Big|~\psi(\root)=1~\&~\forall v\in\BST,\psi(v)=\psi(v\lv)+\psi(v\rv)\Big\}
\end{align*}
Now, given a finite binary search tree $T$, we can consider the corresponding function $\ss_T$ defined by
\begin{align*}
    \ss_T(v):=\frac{|T\cap v\BST|}{|T|}\,.
\end{align*}
Note that $\ss_T$ does not belong to $\Psi$ since $\tau_T(\lv)+\tau_T(\rv)=1-\frac{1}{|T|}$, but its limit will, provided that $|T|\rightarrow\infty$.
This is the essence of the subtree size convergence.

Consider a sequence of random binary search trees $(T_n)_{n\in\bN}$ such that $|T_n|\rightarrow\infty$ and let $\psi\in\Psi$.
We say that the $(T_n)_{n\in\bN}$ \textit{converge with respect to the subtree sizes} to $\psi$, and write it $T_n\subtreeconv\psi$ if $(\tau_{T_n})_{n\in\bN}$ converges in distribution towards $\psi$.
In other words, $(T_n)_{n\in\bN}$ converge with respect to the subtree sizes to $\psi$ if and only if, for all $r\geq1$ and $v_1,\ldots,v_r\in\BST$, we have
\begin{align*}
    \big(\tau_{T_n}(v_i)\big)_{1\leq i\leq r}\longrightarrow\big(\psi(v_i)\big)_{1\leq i\leq r}\,,
\end{align*}
where the convergence occurs in distribution.

Naturally, following the observations from Section~\ref{sec:local} and the remark at the end of Section~\ref{sec:rooted}, we could consider this type of convergence as the \textit{weak} convergence with respect to the subtree sizes, and could also define the \textit{almost-sure convergence with respect to the subtree sizes} (or the \textit{convergence in probability with respect to the subtree sizes}) by replacing the previous convergence in distribution with an almost-sure convergence (or a convergence in probability).
Actually, in the case of the coupling of the infinite Bernoulli model defined in Section~\ref{sec:finite mallows}, we will show that the previous convergence occurs almost-surely, thus corresponding to an almost-sure convergence with respect to the subtree sizes.
This also naturally proves the regular subtree size convergence with respect to the weak topology.

\begin{proof}[Proof of Theorem~\ref{thm:ssc}]
    We want to show that $(\bst{\mp{n}})_{n\in\bN}$ converges to $\psi_R$ defined in~\eqref{eq:def psiR}, that is, for all $v_1,\ldots,v_r$, we have
    \begin{align*}
        \big(\ssm{n}(v_i)\big)_{1\leq i\leq r}\longrightarrow\big(\psi_r(v_i)\big)_{1\leq i\leq r}\,,
    \end{align*}
    where $\ssm{n}=\tau_{\bst{\mp{n}}}$.
    Fix now $r\geq1$ and consider the sequence $(v_0,\ldots,v_R)$ defined by $R=2^{r+1}-2$, $v_i=\rv^i$ for $0\leq i\leq r$, and the set $\{v_i:0\leq i\leq R\}$ corresponds to all the nodes of $\BST$ at depth at most $R$.
    Note that the ordering of $v_i$ for $i>r$ will not matter.
    With this sequence defined, we will now show that
    \begin{align*}
        \big(\ssm{n}(v_i)\big)_{0\leq i\leq R}\asconv\big(\psi_R(v_i)\big)_{0\leq i\leq R}=\big(\underset{r+1}{\underbrace{1,\ldots,1}},0,\ldots,0\big)\,,
    \end{align*}
    which will then imply the desired convergence with respect to the subtree sizes.
    Since this sequence is finite, the previous convergence result is actually equivalent to proving that
    \begin{align*}
        \ssm{n}(v_i)\asconv\left\{\begin{array}{ll}
            1 & \textrm{if $0\leq i\leq r$} \\
            0 & \textrm{otherwise}\,.
        \end{array}\right.
    \end{align*}
    We now focus on proving this last statement.

    First, for any $0\leq i\leq r$, we have $v_i=\rv^i$ and so
    \begin{align*}
        \ssm{n}(v_i)&=\frac{1}{n}\Big|\bst{\mp{n}}\cap\rv^i\BST\Big|=\frac{1}{n}\left[n-\sum_{j=0}^{i-1}\Big(1+\big|\bst{\mp{n}}\cap\rv^j\lv\BST\big|\Big)\right]\,.
    \end{align*}
    Using the coupling of the infinite Bernoulli model, we see that $\bst{\mp{n}}$ is a subtree of $\bst{\mp{\infty}}$, and so we can bound the previous term as follows
    \begin{align*}
        \ssm{n}(v_i)&\geq1-\frac{1}{n}\sum_{j=0}^{i-1}\Big(1+\big|\bst{\mp{\infty}}\cap\rv^j\lv\BST\big|\Big)\geq1-\frac{1}{n}\sum_{j=0}^r\Big(1+\big|\bst{\mp{\infty}}\cap\rv^j\lv\BST\big|\Big)\,.
    \end{align*}
    Moreover, we know that the sequence $(|\bst{\mp{\infty}}\cap\rv^k\lv\BST|)_{k\geq0}$ is a family of independent $\geom{1-q}$ random variables~\cite[Lemma~1.10]{addario2021height}, so the term
    \begin{align*}
        E=\sum_{j=0}^r\Big(1+\big|\bst{\mp{\infty}}\cap\rv^j\lv\BST\big|\Big)
    \end{align*}
    satisfies
    \begin{align*}
        \expec[E]=(r+1)\left(1+\frac{q}{1-q}\right)=\frac{1+r}{1-q}<\infty\,,
    \end{align*}
    and is thus almost-surely finite.
    This implies that $E/n\asconv0$ and so, using that $\ssm{n}(v_i)\leq1$, we obtain that
    \begin{align*}
        \ssm{n}(v_i)\asconv1
    \end{align*}
    for any $0\leq i\leq r$.

    Now, for any $i>r$, there exists some $0\leq j\leq r$ such that $v_i\in\rv^j\lv\BST$, and so
    \begin{align*}
        \ssm{n}(v_i)&\leq\frac{1}{n}\Big|\bst{\mp{n}}\cap\rv^j\lv\BST\Big|\leq\frac{E}{n}\,.
    \end{align*}
    Thus, by using that $E$ is almost-surely finite, we obtain that
    \begin{align*}
        \ssm{n}(v_i)\asconv0
    \end{align*}
    for any $i>r$, thus proving the desired convergence result. 
\end{proof}

\section{Discussions}

In this section we briefly discuss how we can extend the results of this article to the case $q>1$ and mentiond related open problems.

\subsection{Ledwood trees}\label{sec:ledwood}

All the results from this work are applied to Mallows trees with $q\in[0,1)$.
As mentioned in Section~\ref{sec:main}, this choice makes sense since the case $q=1$ behaves very differently and the case $q>1$ can be obtained by symmetry.
We now explain how to extend the results to the case $q>1$ and will further discuss the case $q=1$ in Section~~\ref{sec:follow-up}.

A known result regarding Mallows trees is that, if $T_{n,q}$ is a Mallows trees of size $n$ with parameter $q\in(0,\infty)$, then $\overline{T}_{n,q}$ obtained by swapping all left and right subtrees is also a Mallows tree of size $n$ but with parameter $1/q$.
In a similar manner, of $\imp$ is a two-sided infinite Mallows permutation with parameter $q$, then $-\imp$ is a two-sided infinite Mallows permutation with parameter $1/q$.
Finally, we can define ledwood trees as subtrees of $\overline{\RIBST}$ where $\overline{\RIBST}$ is defined in the same way as $\RIBST$ except that $\uv$ is now considered the inverse of $\lv$ instead of that of $\rv$.
Under this new definition, for any redwood tree $Y$, we can define the ledwood tree $\overline{Y}$ where we transform each $\lv$ into $\rv$ and vice-versa, but do not change $\uv$.
Similarly, we can extend $\ribst{\cdot}$ to $\overline{Y}\langle\cdot\rangle$ either by adapting the definition from~\eqref{eq:redwood}, or by letting $\overline{Y}\langle\mathcal{X}\rangle=\overline{\ribst{-\mathcal{X}}}$.

With all these notations, if $T_{n,q}$ is a Mallows tree of size $n$ with parameter $q>1$, then $\overline{T}_{n,q}$ is a Mallows tree of size $n$ with parameter $1/q<1$ and so, by Theorem~\ref{thm:conv}, we have
\begin{align*}
    \overline{T}_{n,q}\asconv\mathcal{Y}_{1/q}=\left(o,\ribst{\mathcal{X}_{1/q}}\right)\in\RIBST\,.
\end{align*}
But now, $\overline{\ribst{\mathcal{X}_{1/q}}}\overset{d}{=}\overline{\ribst{-\imp}}=\overline{Y}\left\langle\imp\right\rangle$, leading to
\begin{align*}
    T_{n,q}\asconv\left(o,\overline{Y}\left\langle\imp\right\rangle\right)\,,
\end{align*}
which is exactly the extension of Theorem~\ref{thm:conv} to the case $q>1$ simply obtained by replacing the redwood tree of $\imp$ (when $q<1$) by its ledwood tree (when $q>1$).

\medskip

All other results extend in a similar manner, and we only briefly describe the changes below.
\begin{itemize}
    \item
    Theorem~\ref{thm:imt}:
    swap the role of $\lv$ and $\rv$, keeping $\uv$ as it is but with the convention that $\uv=\lv^{-1}$ so $\lv^k=\rv^{|k|}$ for $k<0$ and $\rv^k$ is only defined for $k\geq0$.
    \item
    Theorem~\ref{thm:rooted}:
    here there is a small problem since one-sided infinite Mallows permutations are not define when $q>1$;
    thus we can either show that the limit when $q>1$ is just the symmetric of the limit when $q<1$, or we can define \textit{infinite Mallows functions} as bijective functions from $\bN$ to $-\bN=\{\ldots,-2,-1\}$ whose weight is proportional to $q^{\inv(\sigma)}$, having in mind that this is only properly defined when $q>1$.
    \item
    Theorem~\ref{thm:ghp}:
    the limit in this case actually does not change, we just need to replace the distance by $((1-1/q)n)^{-1}\cdot d_n$ instead of $((1-q)n)^{-1}\cdot d_n$.
    \item
    Theorem~\ref{thm:ssc}:
    we replace the limit by its symmetric, that is $\psi_L=\mathbbm{1}_{v\in\{\lv^k:k\geq0\}}$.
\end{itemize}

\subsection{Open problems}\label{sec:follow-up}

From the previous results and observations, we see that only the case $q=1$ remains to be studied in order to obtain a full characterization of the limit of Mallows trees.
However, this case behaves very differently than when $q\neq1$ and we thus believe that such result would be harder to prove.
Moreover, since we expect a sharp change of behaviour in the limit of Mallows trees between the cases $q\neq1$ and $q=1$, we would also likely observe various regimes if we let $q=q_n$ depend on $n$ (as it was observed in the case of the height of the trees for example~\cite{addario2021height}).
Characterizing these limits and how they depend on the relation between $q$ and $n$ are likely challenging questions.

A possibly easier follow-up project would be to extend these results to the case of random regenerative permutations~\cite{pitman2019regenerative}.
These permutations, which are known to generalize Mallows permutations, show strong regenerative properties, which happen to be one of the most important property in the study of the limit of Mallows trees.
While this study would naturally build off the tools developed here, it is not clear whether the limit could also be characterized in two different ways: as a sequence of trees attached to a straight line and as a redwood tree built off a two-sided infinite permutation.
For this reason we decided to focus on the case of Mallows permutations here and keep the case of random regenerative permutations for a possible later work.

\section*{Acknowledgement}

This project has received funding from the European Union's Horizon 2020 research and innovation programme under the Marie Sk\l{}odowska-Curie grant agreement Grant Agreement No.~101034253~\includegraphics[height=\EUflag]{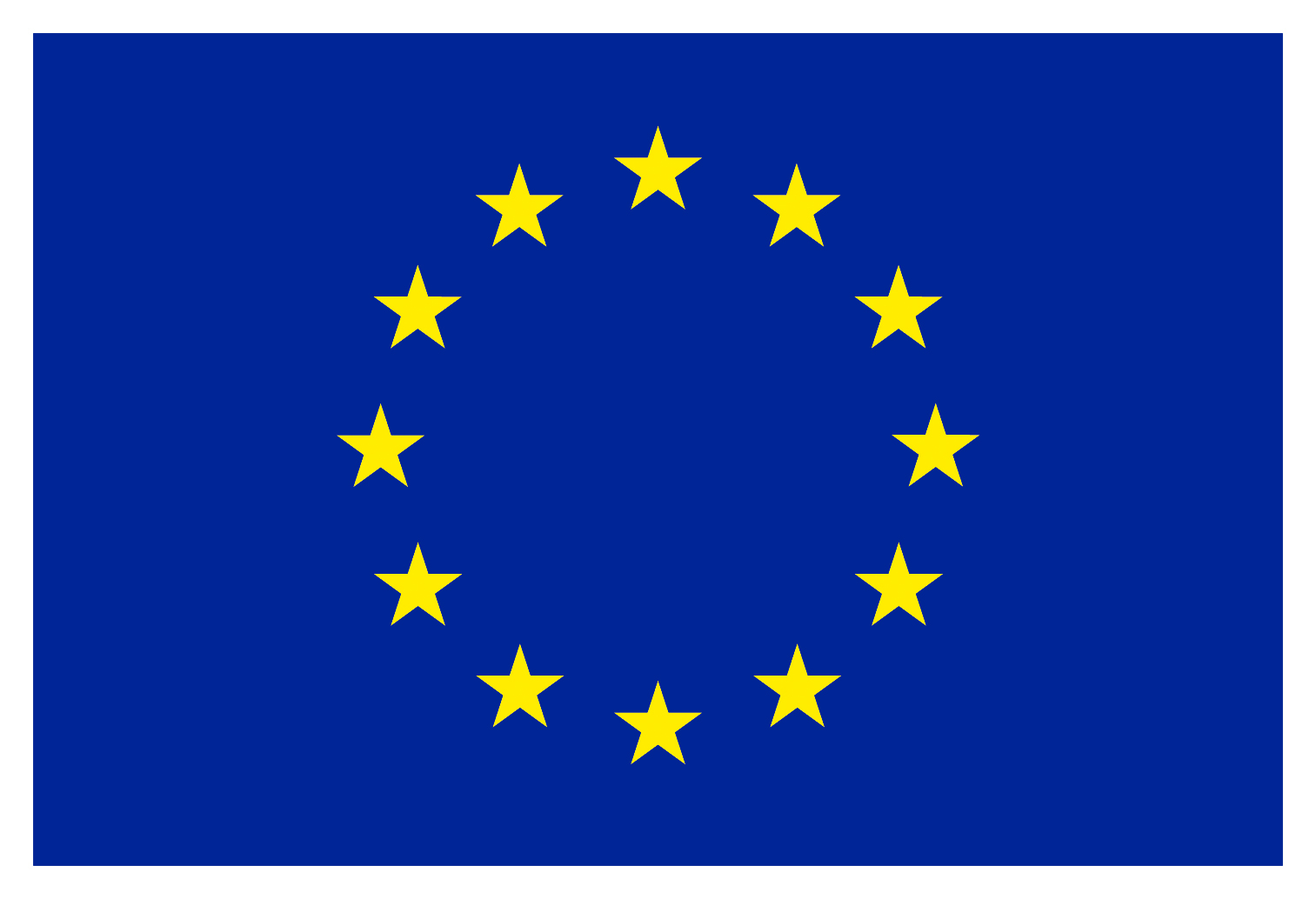}.

\bibliographystyle{alpha}
\bibliography{main}

\begin{thebibliography}{EGW12}

\bibitem[ABC21]{addario2021height}
Louigi Addario-Berry and Beno{\^\i}t Corsini.
\newblock The height of {M}allows trees.
\newblock {\em The Annals of Probability}, 49(5):2220--2271, 2021.

\bibitem[AS04]{aldous2004objective}
David Aldous and J~Michael Steele.
\newblock The objective method: probabilistic combinatorial optimization and
  local weak convergence.
\newblock In {\em Probability on Discrete Structures}, pages 1--72. Springer,
  2004.

\bibitem[BB17]{basu2017limit}
Riddhipratim Basu and Nayantara Bhatnagar.
\newblock Limit theorems for longest monotone subsequences in random {M}allows
  permutations.
\newblock {\em Annales de L'Institut Henri Poincare Section (B) Probability and
  Statistics}, 53(4):1934--1951, 2017.

\bibitem[BP15]{bhatnagar2015lengths}
Nayantara Bhatnagar and Ron Peled.
\newblock Lengths of monotone subsequences in a {M}allows permutation.
\newblock {\em Probability Theory and Related Fields}, 161(3-4):719--780, 2015.

\bibitem[BS01]{benjamini2001recurrence}
Itai Benjamini and Oded Schramm.
\newblock Recurrence of distributional limits of finite planar graphs.
\newblock {\em Electronic Journal of Probability}, 6:1--13, 2001.

\bibitem[CDE18]{crane2018probability}
Harry Crane, Stephen DeSalvo, and Sergi Elizalde.
\newblock The probability of avoiding consecutive patterns in the {M}allows
  distribution.
\newblock {\em Random Structures \& Algorithms}, 53(3):417--447, 2018.

\bibitem[DM10]{dembo2010ising}
Amir Dembo and Andrea Montanari.
\newblock Ising models on locally tree-like graphs.
\newblock {\em The Annals of Applied Probability}, pages 565--592, 2010.

\bibitem[EGW12]{evans2012trickle}
Steven~N Evans, Rudolf Gr{\"u}bel, and Anton Wakolbinger.
\newblock Trickle-down processes and their boundaries.
\newblock {\em Electron. J. Probab}, 17(1):1--58, 2012.

\bibitem[GO10]{gnedin2010q}
Alexander Gnedin and Grigori Olshanski.
\newblock q-exchangeability via quasi-invariance.
\newblock {\em The Annals of Probability}, 38(6):2103--2135, 2010.

\bibitem[GO12]{gnedin2012two}
Alexander Gnedin and Grigori Olshanski.
\newblock The two-sided infinite extension of the {M}allows model for random
  permutations.
\newblock {\em Advances in Applied Mathematics}, 48(5):615--639, 2012.

\bibitem[GP18]{gladkich2018cycle}
Alexey Gladkich and Ron Peled.
\newblock On the cycle structure of {M}allows permutations.
\newblock {\em The Annals of Probability}, 46(2):1114--1169, 2018.

\bibitem[Gr{\"u}23]{grubel2023note}
Rudolf Gr{\"u}bel.
\newblock A note on limits of sequences of binary trees.
\newblock {\em Discrete Mathematics \& Theoretical Computer Science},
  25(Analysis of Algorithms), 2023.

\bibitem[He22]{he2022central}
Jimmy He.
\newblock A central limit theorem for descents of a {M}allows permutation and
  its inverse.
\newblock In {\em Annales de l'Institut Henri Poincar{\'e}, Probabilit{\'e}s et
  Statistiques}, volume~58, pages 667--694. Institut Henri Poincar{\'e}, 2022.

\bibitem[HMV22]{he2022cycles}
Jimmy He, Tobias M{\"u}ller, and Teun~W. Verstraaten.
\newblock Cycles in {M}allows random permutations.
\newblock {\em Random Structures \& Algorithms}, 2022.

\bibitem[Hof23]{van2023random}
Remco van~der Hofstad.
\newblock {\em Random graphs and complex networks, volume {II}}.
\newblock 2023.

\bibitem[Mac98]{macdonald1998symmetric}
Ian~Grant Macdonald.
\newblock {\em Symmetric functions and {H}all polynomials}.
\newblock Oxford University Press, 1998.

\bibitem[Mal57]{mallows1957non}
Colin~L Mallows.
\newblock Non-null ranking models. {I}.
\newblock {\em Biometrika}, 44(1/2):114--130, 1957.

\bibitem[Mie09]{miermont2009tessellations}
Gr{\'e}gory Miermont.
\newblock Tessellations of random maps of arbitrary genus.
\newblock {\em Annales scientifiques de l'Ecole normale sup{\'e}rieure},
  42(5):725--781, 2009.

\bibitem[MS13]{mueller2013length}
Carl Mueller and Shannon Starr.
\newblock The length of the longest increasing subsequence of a random
  {M}allows permutation.
\newblock {\em Journal of Theoretical Probability}, 26(2):514--540, 2013.

\bibitem[Muk16]{mukherjee2016fixed}
Sumit Mukherjee.
\newblock Fixed points and cycle structure of random permutations.
\newblock {\em Electronic Journal of Probability}, 21, 2016.

\bibitem[Pin21]{pinsky2021permutations}
Ross~G Pinsky.
\newblock Permutations avoiding a pattern of length three under {M}allows
  distributions.
\newblock {\em Random Structures \& Algorithms}, 58(4):676--690, 2021.

\bibitem[PT19]{pitman2019regenerative}
JIM PITMAN and WENPIN TANG.
\newblock Regenerative random permutations of integers.
\newblock {\em The Annals of Probability}, 47(3):1378--1416, 2019.

\end{thebibliography}

\end{document}